\newtheorem{theorem}{Theorem}[section]
\newtheorem{corollary}[theorem]{Corollary}
\newtheorem{lemma}[theorem]{Lemma}
\newtheorem{proposition}[theorem]{Proposition}
\theoremstyle{definition}
\newtheorem{definition}[theorem]{Definition}
\newtheorem{remark}[theorem]{Remark}
\newcommand{\E}{E_{\mathbf{0}}}
\newcommand{\ka}{\kappa}
\newcommand{\R}{\mathbb{R}}
\newcommand{\C}{\mathbb{C}}
\newcommand{\nnabla}{\nabla\hspace{-0.4cm}\not}
\newcommand{\Z}{\mathbb{Z}}
\newcommand{\Q}{\mathcal{Q}}
\newcommand{\psib}{\mbox{\boldmath$\psi$}}
\newcommand{\ub}{\mathbf{u}}
\newcommand{\zb}{\mathbf{z}}
\newcommand{\Hb}{\mathbf{H}}
\newcommand{\Lb}{\mathbf{L}}
\newcommand{\Wb}{\mathbf{W}}
\newcommand{\betab}{\boldsymbol{\beta}}
\newcommand{\M}{\mathscr{M}}
\numberwithin{equation}{section}
\begin{document}

\title[Scattering for NLS systems]{Scattering for quadratic-type Schr\"{o}dinger systems  in dimension five without mass-resonance}

\author[N. Noguera]{Norman Noguera}
\address{SM-UCR, Ciudad Universitaria Carlos Monge Alfaro, Departamento de Ciencias Naturales, Apdo: 111-4250, San Ram\'on, Alajuela, Costa Rica}
\email{norman.noguera@ucr.ac.cr}

\author[A. Pastor]{Ademir Pastor}
\address{IMECC-UNICAMP, Rua S\'ergio Buarque de Holanda, 651, 13083-859, Cam\-pi\-nas-SP, Bra\-zil}
\email{apastor@ime.unicamp.br}

\begin{abstract}
In this paper we study the scattering of non-radial solutions in the energy space  to coupled system of nonlinear Schr\"{o}dinger equations with quadratic-type growth interactions in dimension five without the mass-resonance condition. Our approach is based on the recent technique introduced by Dodson and Murphy in \cite{dodsonmurphy2018}, which relies on an interaction Morawetz estimate. It is proved that any solution below the ground states scatters in time.  
\end{abstract}

\subjclass[2010]{ Primary: 35Q55, 35B40; Secondary: 35A01.}

\keywords{Schr\"{o}dinger systems; Quadratic-type interactions; Scattering;  Mass-resonance condition.}

\maketitle

\tableofcontents

\section{Introduction}

In this paper we continue our study concerning the long time behavior of global solutions to $l$-component Schr\"odinger systems with a quadratic-growth nonlinearity. More precisely, we consider the  following initial-value problem 
\begin{equation}\label{system1}
	\begin{cases}
		\displaystyle i\alpha_{k}\partial_{t}u_{k}+\gamma_{k}\Delta u_{k}-\beta_{k} u_{k}=-f_{k}(u_{1},\ldots,u_{l}),\\
		(u_{1}(x,0),\ldots,u_{l}(x,0))=(u_{10},\ldots,u_{l0}),\qquad k=1,\ldots l,
	\end{cases}
\end{equation}
where $u_{1},\ldots,u_{l}$ are complex-valued functions on the variables $(x,t)\in\R^n\times\R$, $\Delta$ stands for the standard Laplacian operator, $\alpha_{k}, \gamma_{k}>0$, $\beta_{k}\geq0$ are real constants and  the nonlinearities $f_{k}$ satisfy a quadratic-type growth.

The study of nonlinear Schr\"odinger systems with quadratic nonlinearities has attracted a lot of attention in recent years. One of the most studied models is the following one
\begin{equation}\label{system1J}
\begin{cases}
\displaystyle i\partial_{t}u_1+\Delta u_1=-2\overline{u}_1u_2,\\
\displaystyle i\partial_{t}u_2+\kappa\Delta u_2=- u^{2}_1,
\end{cases}
\end{equation}
which may be seen as a non-relativistic version  of some Klein-Gordon systems (see\cite{Hayashi}). It can also be derived as a model in nonlinear optics (see \cite{Colin2}). To the best of our knowledge, from the mathematical point of view, the study of the initial-value problem associated with \eqref{system1J} was initiated in \cite{Hayashi}, where several  properties  were established. To motivate our discussion, let us recall some of the results. The local  well-posedness, in the spaces $L^{2}(\R^n)$ and $H^{1}(\R^n)$, was proved for $1\leq n\leq 4$ and $1\leq n\leq 6$, respectively. Due to the conservation of mass and energy the local results may be extended to  global ones  provided $1\leq n\leq 3$. In dimension $n=4$, system \eqref{system1J} is $L^2$-critical in the sense of scaling. In consequence,  for initial data in $H^{1}(\R^{4})$, a sharp threshold for global well-posedness was proved depending on the size of the initial data when compared to the associated ground states (we refer the reader to \cite{Hayashi} for further properties).   Other results concerning global well-posedness and blow-up for \eqref{system1J} have also appeared in the current literature. Indeed, the dichotomy global existence versus blow-up in finite time in $H^1(\R^{5}$) was discussed in \cite{hamano2018global} and \cite{NoPa} (see also \cite{OgawaUriya}). In \cite{dinh2020blow} the author studied the stability of ground states (for $1\leq n\leq3$) as well as the characterization of minimal mass blow-up solutions (for $n=4$). Existence of blow-up/grow-up solutions was also studied in \cite{inui2020blow}. The strong instability of standing waves in the case $n=5$ was established in \cite{dinh2020instability}.

Mostly motivated by the results in \cite{Hayashi}, in \cite{NoPa2} we started the study of \eqref{system1} with general quadratic-type nonlinearity. More precisely,  with a slightly modification in \ref{H4}, we assumed the following (see notation below)

\newtheorem{thmx}{}
\renewcommand\thethmx{(H1)}
\begin{thmx}\label{H1}
	We have
	\begin{align*}
		f_{k}(\mathbf{0})=0, \qquad  k=1,\ldots,l.
	\end{align*}
\end{thmx}

\renewcommand\thethmx{(H2)}
\begin{thmx}\label{H2}
For any $\mathbf{z},\mathbf{z}'\in \C^{l}$ we have
	\begin{equation*}
		\begin{split}
			\left|\frac{\partial }{\partial z_{m}}[f_{k}(\mathbf{z})-f_{k}(\mathbf{z}')]\right|+ \left|\frac{\partial }{\partial \overline{z}_{m}}[f_{k}(\mathbf{z})-f_{k}(\mathbf{z}')]\right|&\lesssim \sum_{j=1}^{l}|z_{j}-z_{j}'|,\qquad k,m=1,\ldots,l,
		\end{split}
	\end{equation*}
\end{thmx}

\renewcommand\thethmx{(H3)}
\begin{thmx}\label{H3}
	There exists a function $F:\C^{l}\to \C$,  such that
	\begin{equation*}
		f_{k}(\mathbf{z})=\frac{\partial F}{\partial \overline{z}_{k}}(\mathbf{z})+\overline{\frac{\partial F }{\partial z_{k}}}(\mathbf{z}),\qquad k=1\ldots,l.
	\end{equation*}
\end{thmx}

\renewcommand\thethmx{(H4)}
\begin{thmx}\label{H4}
There exist positive constants $\sigma_{1},\ldots,\sigma_{l}$ such that for any $\mathbf{z}\in \mathbb{C}^{l}$
\begin{equation*}
\mathrm{Im}\sum_{k=1}^{l}\sigma_{k}f_{k}(\mathbf{z})\overline{z}_{k}=0 .
	\end{equation*}	
\end{thmx}

\renewcommand\thethmx{(H5)}
\begin{thmx}\label{H5}
	Function $F$ is homogeneous of degree 3, that is, for any $\mathbf{z}\in \mathbb{C}^{l}$ and $\lambda >0$,
	\begin{equation*}
		F(\lambda \mathbf{z})=\lambda^{3}F(\mathbf{z}).
	\end{equation*}
\end{thmx}

\renewcommand\thethmx{(H6)}
\begin{thmx}\label{H6}
	There holds
	\begin{equation*}
		\left|\mathrm{Re}\int_{\R^{n}} F(\ub)\;dx\right|\leq \int_{\R^{n}} F(\!\!\big\bracevert\!\! \mathbf{u}\!\!\big\bracevert\!\!)\;dx.
	\end{equation*}
\end{thmx}

\renewcommand\thethmx{(H7)}
\begin{thmx}\label{H7}
	Function $F$ is real valued on $\R^l$, that is, if $(y_{1},\ldots,y_{l})\in \R^{l}$ then
	\begin{equation*}
		F(y_{1},\ldots,y_{l})\in \R.
	\end{equation*}
	Moreover, functions	$f_k$ are non-negative on the positive cone in $\mathbb{R}^l$, that is, for $y_i\geq0$, $i=1,\ldots,l$,
	\begin{equation*}
		f_{k}(y_{1},\ldots,y_{l})\geq0.
	\end{equation*}	
\end{thmx}

\renewcommand\thethmx{(H8)}
\begin{thmx}\label{H8}
	Function $F$ can be written as the sum $F=F_1+\cdots+F_m$, where $F_s$, $s=1,\ldots, m$ is super-modular on $\R^d_+$, $1\leq d\leq l$ and vanishes on hyperplanes, that is, for any $i,j\in\{1,\ldots,d\}$, $i\neq j$ and $k,h>0$, we have
	\begin{equation*}
		F_s(y+he_i+ke_j)+F_s(y)\geq F_s(y+he_i)+F_s(y+ke_j), \qquad y\in \R^d_+,
	\end{equation*}
	and $F_s(y_1,\ldots,y_d)=0$ if $y_j=0$ for some $j\in\{1,\ldots,d\}$.
\end{thmx}

Since our main interest is in studying \eqref{system1} in the Sobolev space $H^1(\R^n)$, the above assumptions are ``quite natural''. In fact, assumption \ref{H1} and \ref{H2} are enough to prove a well-posedness result. Under assumptions \ref{H3} and \ref{H4} we able to show that \eqref{system1} conserves the charge and the energy (see \eqref{mass} and \eqref{energy} below), which in turn is sufficient to extend the local solutions to global ones (under some restrictions on the dimension). The remaining assumption are enough to establish the existence of nonnegative symmetric ground states (see \cite{NoPa2} for further details).

\begin{remark}
It is easily seen that \eqref{system1J} satisfies \ref{H1}-\ref{H8}, in which case we have
\begin{equation*}\label{Fsys}
f_{1}(z_1,z_2)=2\overline{z}_1z_2, \quad f_{2}(z_1,z_2)=z_1^2, \quad \mathrm{and}\quad F(z_1,z_2)=\overline{z}_1^2z_2.
\end{equation*}
For additional models with quadratic nonlinearities satisfying \textnormal{\ref{H1}-\ref{H8}} we refer the reader to \cite{kivshar2000multi}, \cite{NoPa2}, and \cite{Pastor2}.
\end{remark}

Before proceeding with this introduction let us recall the notion of mass-resonance associated with \eqref{system1}: we say that \eqref{system1} satisfies the mass-resonance condition provided (see \cite[Definition 1.1]{NoPa3})
	\begin{equation}\label{RC}
\mathrm{Im}\sum_{k=1}^{l}\frac{\alpha_{k}}{\gamma_{k}}f_{k}(\zb)\overline{z}_{k}=0, \quad \zb\in \mathbb{C}^{l}.\tag{RC}
\end{equation}
The mass-resonance condition plays a distinguished role in the mathematical analysis of system \eqref{system1}. To give a flavor of this property, let us recall a virial-type identity satisfied by solutions of system \eqref{system1}.   Set $\Sigma=\{\ub\in \mathbf{H}_{x}^{1}; x\ub\in \mathbf{L}^{2} \},$
where $x\ub$ means $(xu_{1},\ldots,xu_{l})$, and define the function
\begin{equation}\label{fuctV}
	V(t)=\sum_{k=1}^{l}\frac{\alpha_{k}^{2}}{\gamma_{k}}\int |x|^{2}|u_{k}(x,t)|^{2}\;dx,
\end{equation}
where $\ub(t)$ is the corresponding solution of  \eqref{system1} with initial data $\ub_{0}\in \Sigma$. Then,   it is not difficult to see that
\begin{equation}\label{V2mwmr}
	\begin{split}
		V''(t)&=2nE(\ub_{0})-2n\sum_{k=1}^l\beta_k\|u_k\|_{L^2}^2+2(4-n)\sum_{k=1}^l\gamma_k\|\nabla u_k\|_{L^2}\\
		&\quad -2\frac{d}{dt}\left[\int|x|^{2}\mathrm{Im}\sum_{k=1}^{l}\frac{\alpha_k}{\gamma_k}f_{k}(\mathbf{u})\overline{u}_{k}\;dx\right],
	\end{split}
\end{equation}
as long as the solution exists.
Assuming that \eqref{RC}  holds, the last term in \eqref{V2mwmr}  disappears. In particular, this identity becomes useful to use a convexity argument and show the existence of solutions with negative energy  that blow-up in finite time in dimensions $4\leq n\leq 6$. Since the proof of scattering usually also uses  virial-type identities, then mass-resonance condition also plays a crucial role in that analysis.

 In \cite{NoPa2}  we studied some aspects of the dynamics of \eqref{system1} such as local and global well-posedness, existence of standing waves, the dichotomy global existence versus blow-up in finite time and the stability/instability of standing waves. There we consider system \eqref{system1} endowed with assumptions \ref{H1}-\ref{H8} but  with \ref{H4} replaced by
\renewcommand\thethmx{(H4*)}
\begin{thmx}\label{H4*}
	For any $ \theta \in \R$ and $\mathbf{z}\in \mathbb{C}^{l}$,
	\begin{equation*}
		\mathrm{Re}\,F\left(e^{i\frac{\alpha_{1}}{\gamma_{1}}\theta  }z_{1},\ldots,e^{i\frac{\alpha_{l}}{\gamma_{l}}\theta  }z_{l}\right)=\mathrm{Re}\,F(\mathbf{z}).
	\end{equation*}
\end{thmx}

\noindent Assumption \ref{H4*} together with \ref{H3}, implies that \eqref{RC} holds (see Lemma 2.9 in \cite{NoPa2}), implying that all results obtained in \cite{NoPa2} were under the assumption of mass-resonance. However, as pointed out in \cite{NoPa3} most of the results present in \cite{NoPa2} also holds with \ref{H4} instead of \eqref{H4*}. In particular  assuming \ref{H3} and \ref{H4} we can establish that the  quantities
 	\begin{equation}\label{mass}
	Q(\ub(t)):=\sum_{k=1}^{l}\frac{\sigma_{k}\alpha_{k}}{2}\|  u_{k}(t)\|_{L^{2}}^{2},
	\end{equation}
	and
	\begin{equation}
\label{energy}
	E_{\boldsymbol{\beta}}(\ub(t)):=\sum_{k=1}^{l}\gamma_{k}\|\nabla u_{k}(t)\|_{L^2}^{2}+\sum_{k=1}^{l}\beta_{k}\|u_{k}(t)\|_{L^2}^{2}
    -2\mathrm{Re}\int F(\ub(t))\;dx, 
\end{equation}	
are conserved along the flow of \eqref{system1}, which  means that, as long as a solution exists, it satisfies
\begin{equation}\label{conserQE}
Q(\ub(t))=Q(\ub_{0}) \qquad\mathrm{and}\qquad
E_{\boldsymbol{\beta}}(\ub(t))=E_{\boldsymbol{\beta}}(\ub_{0}).
\end{equation}

Using these conserved quantities and \ref{H6} we then got  an \textit{a priori} bound for the $L^{2}$ and $H^{1}$-norm of a solution, so the global well-posedness may be established  in $L^{2}(\R^{n})$ and $H^{1}(\R^{n})$, when $1\leq n\leq 3$. To give a more precise statement concerning the global well-posedness  in dimensions  $n=4$ and $n=5$, recall that a \textit{standing wave} for \eqref{system1} is a  solution of the form
\begin{equation*}
u_{k}(x,t)=e^{i\frac{\sigma_{k}}{2}\omega t}\psi_{k}(x),\qquad k=1,\ldots,l,
\end{equation*}
where $\omega\in \R$ and  $\psi_{k}$ are real-valued functions decaying to zero at infinity, satisfying the elliptic system
\begin{equation}\label{systemelip}
\displaystyle -\gamma_{k}\Delta \psi_{k}+\left(\frac{\sigma_{k}\alpha_{k}}{2}\omega+\beta_{k}\right) \psi_{k}=f_{k}(\psib),\qquad k=1,\ldots,l.
\end{equation}
A \textit{ground state} is a solution of \eqref{systemelip} that minimizes the functional
\begin{equation*}\label{FunctionalI}
I(\boldsymbol{\psi})=\frac{1}{2}\left[\sum_{k=1}^{l}\gamma_{k}\|\nabla \psi_{k}\|_{L^2}^{2}+\sum_{k=1}^{l}\left(\frac{\sigma_{k}\alpha_{k}}{2}\omega+\beta_{k}\right)\| \psi_{k}\|_{L^2}^{2}\right]
-\int F(\boldsymbol{\psi})\;dx.
\end{equation*}
 Under our assumptions (see \cite{NoPa2} and \cite{NoPa3}),  if the coefficients $\frac{\sigma_{k}\alpha_{k}}{\gamma_{k}}\omega+\beta_{k}$ are positive then the set of ground states of \eqref{systemelip}, say, $\mathcal{G}_{n}(\omega,\boldsymbol{\beta})$ is nonempty for $1\leq n\leq5$. In addition introducing the functionals
\begin{equation}\label{functionalQ}
\mathcal{Q}(\boldsymbol{\psi})=\sum_{k=1}^{l}\left(\frac{\sigma_{k}\alpha_{k}}{2}\omega+\beta_{k}\right)\| \psi_{k}\|_{L^2}^{2},
\end{equation}
\begin{equation}\label{funclKP6}
K(\boldsymbol{\psi})=\sum_{k=1}^{l}\gamma_{k}\|\nabla \psi_{k}\|_{L^2}^{2}\qquad \mathrm{and}\qquad P(\boldsymbol{\psi})=\int F(\boldsymbol{\psi})\;dx,
\end{equation}
we have the following Gagliardo-Nirenberg-type inequality (see \cite[Corollary 4.12]{NoPa2}),
\begin{equation}\label{GNI}
    P(\ub)\leq C_{n}^{opt}\mathcal{Q}(\ub)^{\frac{6-n}{4}}K(\ub)^{\frac{n}{4}},
\end{equation}
for all functions $\ub\in \mathcal{P}:=\{\psib\in \mathbf{H}_{x}^{1}(\R^n);\, P(\psib)>0\}$, with the optimal constant $C_{n}^{opt}$ given by
\begin{equation}\label{bestCn}
    C_{n}^{opt}:=\frac{2(6-n)^{\frac{n-4}{4}}}{n^{\frac{n}{4}}}\frac{1}{\mathcal{Q}(\psib)^{\frac{1}{2}}}.
\end{equation}
In \eqref{bestCn}, $\psib$ is any function in $\mathcal{G}_{n}(\omega,\boldsymbol{\beta})$.

As a consequence of \eqref{GNI} we may prove (see \cite{NoPa3}, also see \cite[Theorem 5.2]{NoPa2}) that, if $\ub_0\in \mathbf{H}_{x}^{1}(\R^4)$ satisfies $Q(\ub_0)<Q(\psib)$, where $\psib$ is any function in $\mathcal{G}_{4}(1,\mathbf{0})$ then the corresponding solution of \eqref{system1} may be extended globally in $\mathbf{H}_{x}^{1}(\R^4)$. In dimension $n=5$ we established the following.\\

\noindent {\bf Theorem A.} \textit{Let $n=5$ and assume \textnormal{\ref{H1}-\ref{H8}}. 
	Assume $\mathbf{u}_{0}\in \mathbf{H}_{x}^{1}$ and let $\mathbf{u}$ be the corresponding solution of system \eqref{system1}. Let  $\boldsymbol{\psi}\in \mathcal{G}_{5}(1,\mathbf{0})$ be a ground state. If
	\begin{equation}\label{desEQgs1}
	Q(\mathbf{u}_{0})	E_{\boldsymbol{\beta}}(\mathbf{u}_{0})<Q(\boldsymbol{\psi})\E(\boldsymbol{\psi}),
	\end{equation}
	where $\E$ is the energy defined in \eqref{energy} with $\boldsymbol{\beta}=\boldsymbol{0}$
	and
	\begin{equation}\label{desQKgs1}
	Q(\mathbf{u}_{0})K(\mathbf{u}_{0})<Q(\boldsymbol{\psi})K(\boldsymbol{\psi}),  
	\end{equation}
	then $\ub$ is global  in $\mathbf{H}_{x}^{1}$ and satisfies
	$$
	Q(\ub_0)\sup_{t\in\R}K(\ub(t))<Q(\boldsymbol{\psi})K(\boldsymbol{\psi}).
	$$
In addition if we assume \textnormal{\ref{H4*}} instead of \textnormal{\ref{H4}} and that $\ub_0$ is radial then  $\ub$ scatters. 
}\\

For the proof of Theorem A we refer the reader to \cite[Theorem 5.2]{NoPa2} and \cite[Theorem 1.1]{NoPa4}. As we already said, under assumption \ref{H4*}, system \eqref{system1} satisfies \eqref{RC}. Consequently, the scattering in Theorem A is proved only in the case of mass-resonance. 

Our main purpose in this paper is to prove that scattering still holds only under assumptions \eqref{desEQgs1} and \eqref{desQKgs1}, that is, we are able to drop the assumptions of mass-resonance and radial symmetry as well. More precisely, our main theorem reads as follows.

\begin{theorem}\label{thm:critscatt} Let $n=5$ and assume \textnormal{\ref{H1}-\ref{H8}}. 
	Suppose $\mathbf{u}_{0}\in \mathbf{H}_{x}^{1}$ and let $\mathbf{u}$ be the corresponding solution of system \eqref{system1}. Let  $\boldsymbol{\psi}\in \mathcal{G}_{5}(1,\mathbf{0})$ be a ground state. Assume
\begin{equation}\label{desEQgs}
	Q(\mathbf{u}_{0})	E_{\boldsymbol{\beta}}(\mathbf{u}_{0})<Q(\boldsymbol{\psi})\E(\boldsymbol{\psi}),
\end{equation}
and
\begin{equation}\label{desQKgs}
	Q(\mathbf{u}_{0})K(\mathbf{u}_{0})<Q(\boldsymbol{\psi})K(\boldsymbol{\psi}).
\end{equation}
Assume also that 
\begin{equation}\label{smallreson}
    \left|\frac{\alpha_{k}}{\gamma_{k}}-\sigma_{k}\right|<\varepsilon_{1}, \qquad k=1,\ldots,l,
\end{equation} 
for some $\varepsilon_{1}>0$ small enough. Then, $\ub$ is global and scatters forward and backward in time, that is,  there exist $u^{\pm}_{k}\in H^{1}(\R^{5})$ such that 
	\begin{equation*}
	\lim_{t\to +\infty}\|u_{k}(t)-U_{k}(t)u_{k}^{+}\|_{H^{1}}=0, \qquad  k=1,\ldots,l
	\end{equation*}
and
	\begin{equation*}
\lim_{t\to -\infty}\|u_{k}(t)-U_{k}(t)u_{k}^{-}\|_{H^{1}}=0, \qquad  k=1,\ldots,l.
\end{equation*}
\end{theorem}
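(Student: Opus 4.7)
The overall strategy follows the Dodson--Murphy scheme \cite{dodsonmurphy2018} adapted to the coupled system. Global well-posedness under \eqref{desEQgs}--\eqref{desQKgs} is already established (see the discussion preceding Theorem \ref{thm:critscatt}), and the same variational argument based on \eqref{GNI} and \eqref{bestCn} produces a coercivity constant $\delta>0$ such that
\begin{equation*}
\sup_{t\in\R} Q(\ub_0)K(\ub(t)) \leq (1-\delta)\,Q(\psib)K(\psib),
\end{equation*}
which in turn yields a uniform-in-time bound of the form $2\,\mathrm{Re}\int F(\ub(t))\,dx \leq (1-\delta')\,K(\ub(t))$ for some $\delta'>0$. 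This is the variational input that makes every subsequent step coercive.

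The first main step is to establish a scattering criterion: it suffices to exhibit a suitable Strichartz-type norm, e.g.\ $\|\ub\|_{L^4_t L^\infty_x(I\times\R^5)}$, that is small on any sufficiently long time interval $I$. This is obtained by a Duhamel/Strichartz perturbation argument using the uniform $H^1$-bound coming from coercivity, and exploiting the Lipschitz estimate \ref{H2} to close the fixed-point calculation. Because the dispersion operators $e^{it\gamma_k\Delta/\alpha_k}$ have different speeds, each component is treated with its own Strichartz pair; the nonlinear coupling is absorbed via \ref{H2}.

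The heart of the proof is an interaction Morawetz estimate that survives the absence of \eqref{RC}. Define
\begin{equation*}
\M(t) = \sum_{k=1}^{l} 2\sigma_k\alpha_k \int\!\!\int |u_k(y,t)|^2\, a(x-y)\cdot\mathrm{Im}\bigl(\overline{u_k(x,t)}\,\nabla u_k(x,t)\bigr)\,dx\,dy,
\end{equation*}
with weight $a(x)=x/|x|$ (or a smoothed truncation thereof), and compute $\M'(t)$. In the mass-resonant case the computation yields a nonnegative principal term controlling $\int_0^T\!\!\int F(\ub)\,dx\,dt$ together with positive kinetic contributions; without \eqref{RC} an additional mismatch term of the schematic form
\begin{equation*}
\sum_{k=1}^{l}\Bigl(\frac{\alpha_k}{\gamma_k}-\sigma_k\Bigr)\,\mathcal{R}_k(\ub,\nabla\ub)
\end{equation*}
appears, where $\mathcal{R}_k$ is a derivative-valued commutator term. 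Using \eqref{smallreson}, this mismatch is absorbed into the positive principal part provided $\varepsilon_1$ is chosen small compared to $\delta'$. Combined with the uniform $H^1$-bound on $\M(t)$, this gives $\int_0^T\!\!\int F(\ub)\,dx\,dt \lesssim 1$ uniformly in $T>0$. A pigeonhole argument then produces arbitrarily long intervals on which the Strichartz norm of the scattering criterion is small, completing the proof.

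The main obstacle is the last ingredient: quantifying how the non-resonance error interacts with the Morawetz calculation so that it can be absorbed. Unlike the virial identity \eqref{V2mwmr}, where the mismatch is a pure time derivative and is controlled by boundary terms, in the interaction Morawetz it carries spatial derivatives, and handling it requires a careful integration by parts coupled with the variational coercivity. The smallness hypothesis \eqref{smallreson} is tailored precisely to make this absorption feasible, at the price of not recovering the full non-resonant range.
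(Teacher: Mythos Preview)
Your high-level architecture---coercivity below the ground state, a scattering criterion based on Strichartz smallness on some interval, and an interaction Morawetz to feed the criterion---matches the paper. However, two essential mechanisms are missing or misdescribed, and as written the argument would not close.

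First, the interaction Morawetz does \emph{not} yield $\int_0^T\!\!\int F(\ub)\,dx\,dt\lesssim 1$ uniformly in $T$; for a focusing problem with non-radial data, no such global spacetime bound is available below the ground state. What the Dodson--Murphy machinery actually produces (Proposition~\ref{viri-Morz}) is a localized, scale-averaged bound of the form
\[
\frac{1}{JT_0}\int_a^{a+T_0}\!\int_{R_0}^{R_0 e^J}\frac{1}{R^5}\int_{\R^5}\M\!\left(\chi\!\left(\tfrac{\cdot-s}{R}\right)\ub\right)K\!\left(\chi\!\left(\tfrac{\cdot-s}{R}\right)\ub^{\xi_0}\right)ds\,\frac{dR}{R}\,dt\lesssim\varepsilon.
\]
Crucially, the kinetic factor is evaluated on the Galilean-boosted function $\ub^{\xi_0}$, with $\xi_0=\xi_0(s,R,t)$ chosen so that the localized momentum vanishes (Lemma~\ref{Gaugeprop}). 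This boost is what makes the principal term in $\frac{d}{dt}M_R$ nonnegative: without it, the cross term $-\int\phi(x-y)\,\mathcal{T}(\ub)(x)\cdot\mathcal{T}(\ub)(y)\,dx\,dy$ has no sign. Your proposal omits this step entirely, and the bare weight $a(x)=x/|x|$ you suggest will not produce a coercive main term in the focusing, non-radial setting. The potential contribution $\int F$ is then absorbed \emph{locally} via the coercivity-on-balls Lemma~\ref{lemcoerbal} (which in turn rests on the refined Gagliardo--Nirenberg inequality of Lemma~\ref{lemGNIref} with the boost), not recovered as a global spacetime quantity. From this output one reaches the scattering criterion~\eqref{t_0T_0} by a further interpolation through $L^{5/2}_{tx}$, which you do not address.

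Second, the resonance defect does not appear as a spatial-derivative commutator requiring integration by parts. It arises because $\partial_t\mathcal{M}(\ub)$ contains the extra term $-2\,\mathrm{Im}\sum_k\frac{\alpha_k}{\gamma_k}f_k(\ub)\overline{u}_k$; subtracting the \ref{H4} identity leaves $-2\,\mathrm{Im}\sum_k\bigl(\frac{\alpha_k}{\gamma_k}-\sigma_k\bigr)f_k(\ub)\overline{u}_k$, a zeroth-order expression. In $M_R$ this sits against the weight $\varphi(x-y)(x-y)$, which is bounded only by $R$, giving an error $\mathcal{C}\lesssim R\varepsilon_1$. After the logarithmic average over $R\in[R_0,R_0e^J]$ this contributes $\frac{R_0 e^J}{J}\varepsilon_1$, so $\varepsilon_1$ must be taken \emph{exponentially small} in $\varepsilon$ (the paper sets $\varepsilon_1=e^{-\varepsilon^{-2}}$), not merely small relative to the coercivity constant $\delta'$. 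Your description of the absorption as ``integration by parts coupled with the variational coercivity'' misidentifies both the structure of the error and the mechanism that controls it.
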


  Theorem \ref{thm:critscatt} states that if the masses of the system are sufficiently close to the constant appearing in assumption \ref{H4} then scattering holds. Of course, if the constants $\frac{\alpha_{k}}{\gamma_k}$ and $\sigma_{k}$  coincide, we are in the mass-resonance case and the scattering result still holds.

Our idea  to prove Theorem \ref{thm:critscatt} is to apply the recent theory introduced in \cite{dodsonmurphy2018}, where the authors have shown the scattering in $H^1(\R^n)$, with non-radial data,  for the Schr\"odinger equation
\begin{equation*}
\displaystyle i\partial_{t}u+\Delta u=-|u|^{\frac{4}{n-4}}u,
\end{equation*}
 in dimensions $n\geq 3$ using a simple proof that avoids the concentration-compactness. Instead, their analysis is based on an interaction Morawetz inequality. In the same direction, scattering for some Schr\"odinger-type system have appeared, for instance, in \cite{ardila}, \cite{mengxu2020},\cite{wang2019Sacttering}.

Before ending this introduction, let us recall the scattering results for system \eqref{system1J}. First note that \eqref{system1J} satisfies the mass-resonance condition if and only if $\ka=1/2$. In dimension $n=5$, under similar assumption as in Theorem A,  in \cite{hamano2018global} the author established the scattering of radially symmetric solutions in $H^1(\R^5)$ in the case $\ka=1/2$; the mass-resonance assumption was dropped in \cite{hamano2019scattering}. In both cases, the authors used the concentration-compactness and rigidity method introduced in \cite{KenigMarleScattering}. More recently, by using the ideas introduced in \cite{dodsonmurphy2018}, the assumption of radial symmetry was dropped in  \cite{mengxu2020} and the assumption of mass-resonance was dropped in  \cite{wang2019Sacttering}. Thus our result can also be viewed as an extension of the results in \cite{wang2019Sacttering} to a more general quadratic-type NLS systems.

Some results have also appeared in the critical case $n=4$. Indeed, scattering below the ground states in $L^2(\R^4)$ was established in \cite{inui2019scattering}. More precisely, the authors established that scattering holds for any initial data below the ground state  if $\ka=1/2$ and for radial initial data below the ground state  if $\ka\neq1/2$.

This work is organized as follows. In section \ref{sec.prel} we recall some notation and preliminary lemmas that will be needed throughout the paper.
In section \ref{sec.scattcrit}, using the strategy introduced in \cite{dodsonmurphy2018}, we prove a scattering criterion for \eqref{system1}. Finally, section \ref{sec.proofscatt} is devoted to showing Theorem \ref{thm:critscatt}, by using a  Morawetz estimate and the ground state solutions.



\section{Preliminaries}\label{sec.prel}
In this section we introduce some notations, review some useful estimates and give  consequences of our assumptions.
\subsection{Notation}
 We use $C$ to denote several positive constants that may vary line-by-line. If $a$ and $b$ are two positive constants, by $a\lesssim b$ we mean there is a constant $C$ such that $a\leq Cb$.
Given any set $A$, by  $\mathbf{A}$ (or $A^{l}$) we denote  the product  $\displaystyle A\times \cdots \times A $ ($l$ times). In particular, if $A$ is a Banach space then $\mathbf{A}$ is also a Banach space with the standard norm given by the sum.  For a number $z\in\mathbb{C}$, $\mathrm{Re}\,z$  and $\mathrm{Im}\,z$ represents its real and imaginary parts. Also, $\overline{z}$ denotes its complex conjugate. We set $\big\bracevert\!\! \mathbf{z}\!\!\big\bracevert$  for the vector $(|z_{1}|,\ldots,|z_{l}|)$. This is not to be confused with $|\mathbf{z}|=\sqrt{z_1^2+\ldots+z_l^2}$ which stands for the standard norm of the vector $\mathbf{z}$ in $\mathbb{C}^l$.

The space
 $L^{p}=L^{p}(\R^{n})$, $1\leq p\leq \infty$, stands for the standard Lebesgue spaces. By $W^{s,p}=W^{s,p}(\R^{n})$, $1\leq p\leq \infty$, $s\in\R$, we denote the usual Sobolev spaces. In the case $p=2$, we use the standard notation $H^s=W^{s,2}$. Thus, $\mathbf{H}^1=\mathbf{H}^1(\R^n)$ denotes the Sobolev space $H^1\times \cdots \times H^1$.

Given a time interval $I$ and  a Banach space $X$, $L^{p}(I;X)$ represents the $L^{p}$ space of $X$-valued functions defined on $I$ endowed with the norm
$$
\|f\|_{L^{p}_{t}X}=\left(\int_I \|f(t)\|_X^p dt \right)^{\frac{1}{p}}.
$$
In the case $X=L^q(\R^n)$, we use the notation $L^{p}_{t}L^{q}_{x}(I\times\R^n)$. If no confusion will be caused we denote  $L^{p}_{t}L^{q}_{x}(I\times\R^n)$ simply by  $L^{p}_{t}L^{q}_{x}$ and its norm by $\|\cdot\|_{L^{p}_{t}L^{q}_{x}}$. Also, when $p=q$ we will use $L^{p}_{tx}$ instead of $L^{p}_{t}L^{q}_{x}$. If necessary we use subscript to indicate which variable the spaces are taken, for instance, $\mathbf{H}^1_x$ represents the Sobolev space with respect to the variable $x$.

\subsection{Some useful estimates} 

Here and throughout the paper we assume that \ref{H1}-\ref{H8} hold.
Let $U_{k}(t)$ denote the Schr\"odinger evolution group defined by $\displaystyle U_{k}(t)=e^{i\frac{t}{\alpha_{k}}(\gamma_{k}\Delta-\beta_{k})}$, $ k=1\ldots, l$. In view of Duhamel's principle the Cauchy problem \eqref{system1} can be written as the following  system of integral equations,
\begin{equation}\label{system2}
\begin{cases}
u_{k}(t)= \displaystyle U_{k}(t)u_{k0}+i\int_{0}^{t}U_{k}(t-t') \frac{1}{\alpha_{k}}f_{k}(\mathbf{u})\;dt',\\
(u_{1}(x,0),\ldots,u_{l}(x,0))=(u_{10},\ldots,u_{l0})=:\ub_0.
\end{cases}
\end{equation}
Thus, in what follows, by a solution of \eqref{system1} we mean a solution of \eqref{system2}.

Let us start be recalling the following dispersive estimate.

\begin{lemma}\label{dispest}
	If $2\leq p\leq\infty$ and $t\neq 0$, then 
	\begin{equation*}
	\|U_{k}(t)f\|_{L_{x}^{p}(\R^{n})}\lesssim |t|^{-n\left(\frac{1}{2}-\frac{1}{p}\right)}\|f\|_{L_{x}^{p'}(\R^{n})}, \qquad \mbox{for all $f\in L_{x}^{p'}(\R^{n})$.}
	\end{equation*}
\end{lemma}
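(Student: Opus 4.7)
The plan is to reduce the propagator $U_k(t)=e^{i\frac{t}{\alpha_k}(\gamma_k\Delta-\beta_k)}$ to the classical free Schrödinger evolution and then apply the standard dispersive bound combined with Riesz--Thorin interpolation. First, I would factor
\[
U_k(t)=e^{-i\frac{\beta_k}{\alpha_k}t}\,e^{i\frac{\gamma_k}{\alpha_k}t\,\Delta},
\]
noting that the scalar phase $e^{-i\beta_k t/\alpha_k}$ has modulus one and therefore preserves every $L^p_x$ norm. Thus it suffices to estimate $e^{is\Delta}$ with $s=\frac{\gamma_k}{\alpha_k}t$; since $\alpha_k,\gamma_k>0$ are fixed constants, $|s|\simeq|t|$ and any factors they generate will be absorbed into the implicit constant.

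Next, I would establish the two endpoint bounds. For $p=2$, Plancherel's theorem applied to $e^{is\Delta}$ (whose Fourier multiplier is $e^{-is|\xi|^2}$, of modulus one) yields the unitarity $\|U_k(t)f\|_{L^2_x}=\|f\|_{L^2_x}$, which is the claim at $p=2$. For $p=\infty$, I would invoke the explicit representation of the Schrödinger kernel,
\[
\bigl[e^{is\Delta}f\bigr](x)=\frac{1}{(4\pi is)^{n/2}}\int_{\R^n}e^{i|x-y|^2/(4s)}f(y)\,dy,\qquad s\neq 0,
\]
whose pointwise estimate $|K_s(x-y)|\lesssim|s|^{-n/2}$ immediately gives, via the trivial $L^\infty$--$L^1$ bound on a convolution by a bounded kernel,
\[
\|U_k(t)f\|_{L^\infty_x}\lesssim|t|^{-n/2}\|f\|_{L^1_x},
\]
which is the claim at $p=\infty$, since $n(\tfrac12-\tfrac1\infty)=\tfrac n2$.

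Finally, for the intermediate range $2<p<\infty$, I would interpolate between the two endpoints by Riesz--Thorin. Writing $\tfrac1p=\tfrac{1-\theta}{2}+\tfrac{\theta}{\infty}$, so that $\theta=1-\tfrac2p$ and $\tfrac{1}{p'}=1-\tfrac1p$, one obtains
\[
\|U_k(t)f\|_{L^p_x}\lesssim|t|^{-\theta n/2}\|f\|_{L^{p'}_x}=|t|^{-n\left(\frac12-\frac1p\right)}\|f\|_{L^{p'}_x},
\]
which is exactly the stated bound. There is no real obstacle here; the only point that deserves attention is to keep careful track of the rescaling $s=(\gamma_k/\alpha_k)t$ in the kernel and to verify that the scalar phase and this rescaling contribute only constants depending on $\alpha_k,\gamma_k,\beta_k$, all of which are hidden in the symbol $\lesssim$.
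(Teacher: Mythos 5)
Your proof is correct and is the standard argument (unitarity at $p=2$, explicit kernel bound at $p=\infty$, Riesz--Thorin interpolation, after factoring out the scalar phase and rescaling time). The paper itself simply cites Proposition 2.2.3 of Cazenave's book, where the same argument appears, so you have reproduced the underlying proof rather than diverged from it.
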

\begin{proof}
	See Proposition 2.2.3 in \cite{Cazenave}.
\end{proof}

Before proceeding we recall the definition of admissible pair in dimension $n=5$.

\begin{definition}
We say that $(q,r)$ is an admissible  pair if
\begin{equation*}
    \frac{2}{q}+\frac{5}{r}=\frac{5}{2},
\end{equation*}
where $2\leq r\leq \frac{10}{3}$.
\end{definition}

We next recall  the well known Strichartz inequalities. 

\begin{proposition}[Strichartz's inequalities]\label{stricha}
The following inequalities hold.
\begin{itemize}
	\item[(i)] Let $(q,r)$ be an admissible pair. Then,
	$$
	\|U_{k}(t)f\|_{L_{t}^{q}L_{x}^{r}(\R\times\R^{5})}\lesssim\|f\|_{L_{x}^{2}(\R^{5})}.
	$$
	\item[(ii)] Let $I$ be an interval  and $t_0\in \overline{I}$. Let $(q_1,r_1)$ and $(q_2,r_2)$ be two admissible pairs. Then, 
	$$
	\left\| \int_{t_0}^tU_{k}(t-s)f(\cdot,s)ds \right\|_{L_{t}^{q_{1}}L_{x}^{r_{1}}(I\times\R^{5})} \lesssim \|f\|_{L_{t}^{q_{2}'}L_{x}^{r_{2}'}(I\times\R^{5})}
	$$
	and
	$$
		\left\| \int_{a}^bU_{k}(t-s)f(\cdot,s)ds \right\|_{L_{t}^{q_{1}}L_{x}^{r_{1}}(\R\times\R^{5})} \lesssim \|f\|_{L_{t}^{q_{2}'}L_{x}^{r_{2}'}([a,b]\times\R^{5})},
	$$
	where $q_2'$ and $r_2'$ are the H\"older conjugates of $q_2$ and $r_2$, respectively.
	\item[(iii)] Let $I$ be an interval  and $t_0\in \overline{I}$. Then
		$$
	\left\| \int_{t_0}^tU_{k}(t-s)f(\cdot,s)ds \right\|_{L_{t}^{6}L_{x}^{3}(I\times\R^{5})} \lesssim \|f\|_{L_{t}^{3}L_{x}^{\frac{3}{2}}(I\times\R^{5})}.
	$$
\end{itemize}
\end{proposition}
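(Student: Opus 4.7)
The plan is to reduce everything to classical Strichartz theory for the free Schrödinger group. First, I would note that $U_k(t)=e^{-i\beta_k t/\alpha_k}\,e^{it(\gamma_k/\alpha_k)\Delta}$ differs from the standard propagator $e^{it\Delta}$ only by a unimodular phase and a rescaling of time by the positive factor $\gamma_k/\alpha_k$; neither affects Lebesgue norms, so every estimate below is equivalent to the corresponding estimate for $e^{it\Delta}$ on $\R^5$. The two essential inputs are the dispersive bound of Lemma \ref{dispest} and the $L^2$-unitarity $\|U_k(t)f\|_{L^2}=\|f\|_{L^2}$, obtained from Plancherel.

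For \textbf{(i)}, I would run the $TT^\ast$ argument. Set $Tf(t,x):=U_k(t)f(x)$; then $(Tf,F)_{t,x}=(f,T^\ast F)_x$ with $T^\ast F=\int U_k(-s)F(s)\,ds$, so $TT^\ast F(t)=\int U_k(t-s)F(s)\,ds$, and the desired bound $T:L^2_x\to L^q_tL^r_x$ is equivalent to $TT^\ast:L^{q'}_tL^{r'}_x\to L^q_tL^r_x$. Combining Lemma \ref{dispest}, which gives $\|U_k(t-s)F(s)\|_{L^r_x}\lesssim |t-s|^{-5(1/2-1/r)}\|F(s)\|_{L^{r'}_x}$, with Minkowski and the Hardy-Littlewood-Sobolev inequality yields the bound whenever the admissibility relation $2/q+5/r=5/2$ is satisfied and $r<10/3$ (the strictly sub-endpoint range). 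At the endpoint $(q,r)=(2,10/3)$, HLS fails and one must invoke the bilinear interpolation argument of Keel-Tao.

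For \textbf{(ii)}, the ``non-retarded'' estimate is immediate from (i) by factoring $\int_a^b U_k(t-s)f(\cdot,s)\,ds = U_k(t)\bigl[\int_a^b U_k(-s)f(\cdot,s)\,ds\bigr]$: apply $T^\ast$ (with exponents $(q_2,r_2)$) to land in $L^2_x$, then apply $T$ (with exponents $(q_1,r_1)$). To upgrade this to the truly retarded integral $\int_{t_0}^t$, I would invoke the Christ-Kiselev lemma, which applies as long as $q_1>q_2'$; the only case not covered this way is the double-endpoint $(q_1,r_1)=(q_2,r_2)=(2,10/3)$, which again requires the Keel-Tao refinement.

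For \textbf{(iii)}, the pair $(6,3)$ is not admissible in dimension five, so neither $TT^\ast$ nor Christ-Kiselev apply as stated; instead I would argue directly. Lemma \ref{dispest} yields
\begin{equation*}
\Bigl\|\int_{t_0}^t U_k(t-s)f(\cdot,s)\,ds\Bigr\|_{L^3_x}\lesssim \int_{\R}|t-s|^{-5/6}\|f(\cdot,s)\|_{L^{3/2}_x}\,ds.
\end{equation*}
Taking the $L^6_t$ norm and applying the one-dimensional Hardy-Littlewood-Sobolev inequality with singularity exponent $5/6$ (and dual index determined by $1+\tfrac{1}{6}=\tfrac{5}{6}+\tfrac{1}{p}$, giving $p=3$) produces exactly the norm $\|f\|_{L^3_tL^{3/2}_x}$. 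The main obstacle is the endpoint case behind (i)-(ii), which cannot be handled by HLS alone; in this paper, however, this is already encoded in the admissibility range chosen, and one may simply invoke the Keel-Tao theorem or cite Proposition 2.2.3 of \cite{Cazenave} to conclude.
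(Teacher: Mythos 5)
Your argument is correct, and for parts (i)--(ii) it is essentially what the paper does (the paper simply cites Theorem 2.3.3 of Cazenave, whose proof is the $TT^\ast$/Christ--Kiselev/Keel--Tao machinery you sketch; note that the Cazenave reference for the Strichartz estimates is Theorem 2.3.3, not Proposition 2.2.3, which is the dispersive bound). Where you genuinely diverge is in (iii): the paper observes that $(6,3)$ and $(\tfrac{3}{2},3)$ are $\tfrac{5}{2}$-acceptable pairs in the sense of Foschi and then invokes Proposition 6.2 of \cite{foschi}, whereas you give a direct, self-contained proof by combining the dispersive estimate $\|U_k(t-s)f\|_{L^3_x}\lesssim|t-s|^{-5/6}\|f\|_{L^{3/2}_x}$ with the one-dimensional Hardy--Littlewood--Sobolev inequality (with $\lambda=5/6$, mapping $L^3_t\to L^6_t$, exponents checked correctly). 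Both routes are valid; the paper's has the advantage of slotting the pair $(6,3)$, $(3,\tfrac{3}{2})$ into an off-the-shelf general theorem on inhomogeneous Strichartz estimates for non-admissible exponents, while yours is more elementary and avoids importing the acceptable-pair formalism --- and since the kernel is non-negative, pointwise domination of the truncated integral $\int_{t_0}^t$ by the full convolution over $\R$ makes the retarded nature of the Duhamel operator cost nothing, so no Christ--Kiselev argument is needed for this particular estimate.
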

\begin{proof}
	For (i) and (ii) see, for instance, Theorem 2.3.3 in \cite{Cazenave}. For (iii) note that  $(6,3)$ is not an admissible pair. However, $(6,3)$ and $(\frac{3}{2},3)$ are $\frac{5}{2}$-acceptable pairs. Recall that a pair $(q,r)$ is said to be $\sigma$-acceptable if
	$$
	\frac{1}{q}<2\sigma\left(\frac{1}{2}-\frac{1}{r}\right).
	$$
	Hence the result follows as an application of Proposition 6.2 in \cite{foschi}.
\end{proof}

The next lemma provides some consequences of our assumptions concerning the nonlinearities.

 \begin{lemma}\label{estdiffk} Assume that  \textnormal{\ref{H1}-\ref{H5}}   hold. 
 
 \begin{enumerate}
     \item[(i)] For all $\mathbf{z}\in\mathbb{C}^l$  we have
\begin{equation*}
\left|f_{k}(\mathbf{z})\right|\lesssim \sum_{j=1}^{l}|z_{j}|^{2}, \qquad k=1\ldots, l
\end{equation*}
and
\begin{equation*}
|\mathrm{Re}\,F(\mathbf{z})|\lesssim  \sum_{j=1}^{l}|z_{j}|^{3}.
\end{equation*}
\item[(ii)] We have
\begin{equation*}
\mathrm{Re}\sum_{k=1}^{l}f_{k}(\mathbf{u})\nabla \overline{u}_{k}=\mathrm{Re}\,[\nabla F(\mathbf{u})]
\end{equation*}
and
\begin{equation*}
\mathrm{Re}\sum_{k=1}^{l}f_{k}(\mathbf{u})\overline{u}_{k}=\mathrm{Re}\,[3F(\mathbf{u})].
\end{equation*}
\item[(iii)] Let $1< p,q,r< \infty$ be such that $\frac{1}{r}=\frac{1}{p}+\frac{1}{q}$.  Then, for $k=1,\ldots,l$,
\begin{equation}\label{lei11}
	\|\nabla f_{k}(\mathbf{u})\|_{L^{r}}\lesssim \|\mathbf{u}\|_{\mathbf{L}_{x}^{p}}\| \nabla\mathbf{u}\|_{\mathbf{L}_{x}^{q}(\R^n)}
\end{equation}
and 
\begin{equation}\label{lei12}
\|f_{k}(\mathbf{u})\|_{{W}^{\frac{1}{2},r}}\lesssim \|\mathbf{u}\|_{\mathbf{L}_{x}^{p}}\| \mathbf{u}\|_{\mathbf{W}_{x}^{\frac{1}{2},q}(\R^n)}.
\end{equation}
 \end{enumerate}
\end{lemma}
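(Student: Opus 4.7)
\smallskip

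\textbf{Plan of proof.} I would treat the three parts separately, starting from the two most direct items and leaving the fractional Sobolev estimate for last, as it is the only one requiring genuine harmonic-analytic machinery.

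For part (i), the plan is to reduce the pointwise bound on $f_k$ to a first-order Taylor expansion around $\mathbf{0}$. Applying \ref{H2} with $\mathbf{z}' = \mathbf{0}$ gives
\[
\left|\frac{\partial f_k}{\partial z_m}(\mathbf{z})\right| + \left|\frac{\partial f_k}{\partial \overline{z}_m}(\mathbf{z})\right| \lesssim \sum_{j=1}^{l} |z_j|,
\]
and since $f_k(\mathbf{0}) = 0$ by \ref{H1}, writing $f_k(\mathbf{z}) = \int_0^1 \frac{d}{ds} f_k(s\mathbf{z})\, ds$ and using the chain rule yields $|f_k(\mathbf{z})| \lesssim \sum_j |z_j|^2$. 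The bound on $\mathrm{Re}\, F$ then follows either directly from \ref{H5} (a continuous function homogeneous of degree $3$ is bounded on the unit sphere) or, more cleanly, by combining the identity $\mathrm{Re} \sum_k f_k(\mathbf{z})\overline{z}_k = \mathrm{Re}[3F(\mathbf{z})]$ from part (ii) with the just-proved quadratic bound on $f_k$ and Cauchy--Schwarz.

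For part (ii), I would use \ref{H3} to rewrite $f_k$, and then apply the chain rule to $F(\mathbf{u})$, which is a complex-valued function of complex variables. Indeed,
\[
\nabla F(\mathbf{u}) = \sum_{k=1}^l \left[\frac{\partial F}{\partial z_k}(\mathbf{u})\, \nabla u_k + \frac{\partial F}{\partial \overline{z}_k}(\mathbf{u})\, \nabla \overline{u}_k\right],
\]
and substituting $f_k = \partial_{\overline{z}_k} F + \overline{\partial_{z_k} F}$ into $\sum_k f_k(\mathbf{u}) \nabla \overline{u}_k$ and taking real parts (using that $\mathrm{Re}\, w = \mathrm{Re}\,\overline{w}$) yields exactly $\mathrm{Re}[\nabla F(\mathbf{u})]$. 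The second identity follows the same substitution, but now invoking Euler's relation for homogeneous functions: differentiating $F(\lambda \mathbf{z}) = \lambda^3 F(\mathbf{z})$ at $\lambda = 1$ (with $\lambda$ real) gives $\sum_k [(\partial_{z_k} F)(\mathbf{z}) z_k + (\partial_{\overline{z}_k} F)(\mathbf{z}) \overline{z}_k] = 3 F(\mathbf{z})$, whose real part is the claimed formula.

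For part (iii), the $L^r$ bound on $\nabla f_k(\mathbf{u})$ is obtained by applying the chain rule to $f_k(\mathbf{u})$, using the pointwise derivative bound $|\partial_{z_m} f_k(\mathbf{u})| + |\partial_{\overline{z}_m} f_k(\mathbf{u})| \lesssim \sum_j |u_j|$ established from \ref{H1}--\ref{H2}, and then applying H\"older's inequality with exponents $p, q, r$. The fractional estimate \eqref{lei12} is the main obstacle, and I would handle it using a fractional chain rule / Kato--Ponce commutator-type estimate: since $f_k \in C^1(\C^l;\C)$ with $f_k(\mathbf{0}) = 0$ and $f_k'$ Lipschitz, one has
\[
\|D^{1/2} f_k(\mathbf{u})\|_{L^r} \lesssim \||f_k'(\mathbf{u})|\|_{L^p}\, \|D^{1/2} \mathbf{u}\|_{L^q} \lesssim \|\mathbf{u}\|_{\mathbf{L}^p_x}\, \|D^{1/2}\mathbf{u}\|_{\mathbf{L}^q_x},
\]
combined with an analogous $L^r$ bound on $f_k(\mathbf{u})$ itself via H\"older. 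Concretely, I expect to cite the standard fractional chain rule for $C^1$ nonlinearities (e.g.\ Christ--Weinstein, or the version in Taylor's PDE text), applied to the real and imaginary parts of $f_k$ viewed as functions of the $2l$ real variables $(\mathrm{Re}\, u_j, \mathrm{Im}\, u_j)$. The delicate point is verifying the hypotheses of that chain rule under our complex-structure assumptions; this is where the bulk of the technical care goes, though no new idea beyond invoking \ref{H2} to control the Lipschitz constant of $f_k'$ is needed.
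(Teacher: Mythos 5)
The paper itself does not prove this lemma; it delegates (i)--(ii) to \cite[Cor.\ 2.3, Lemmas 2.10, 2.11]{NoPa2} and (iii) to the fractional calculus in Taylor's text together with \cite[Cor.\ 2.5]{NoPa2}. Your reconstruction is the natural one and is essentially sound: the complex chain rule together with \ref{H3} and the observation $\mathrm{Re}\,w=\mathrm{Re}\,\overline{w}$ gives both identities in (ii) (with Euler's relation for a degree-$3$ homogeneous function supplying the factor $3$), and the pointwise bound $|\partial_{z_m}f_k(\mathbf{u})|+|\partial_{\overline z_m}f_k(\mathbf{u})|\lesssim\sum_j|u_j|$ plus H\"older handles \eqref{lei11}; for \eqref{lei12}, invoking a Christ--Weinstein/Taylor-type fractional chain rule for $C^1$ nonlinearities with Lipschitz derivative is indeed the right tool, and the verification that $f_k$ satisfies its hypotheses is exactly \ref{H2}.

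There is, however, one small gap in part (i) worth closing. The step ``applying \ref{H2} with $\mathbf{z}'=\mathbf{0}$ gives $|\partial_{z_m}f_k(\mathbf{z})|+|\partial_{\overline z_m}f_k(\mathbf{z})|\lesssim\sum_j|z_j|$'' tacitly assumes $\partial_{z_m}f_k(\mathbf{0})=\partial_{\overline z_m}f_k(\mathbf{0})=0$. Hypothesis \ref{H1} only gives $f_k(\mathbf{0})=0$, and \ref{H2}, read as stated, only gives Lipschitz continuity of the gradient of $f_k$; neither forces the gradient to vanish at the origin. The missing input comes from the remaining hypotheses in your allowed list: by \ref{H3} and \ref{H5}, each $\partial_{\overline z_k}F$ and $\partial_{z_k}F$ is homogeneous of degree $2$, hence so is $f_k$, and consequently $\nabla f_k(\mathbf{0})=\mathbf{0}$. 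In fact, once you notice this, homogeneity plus continuity alone already yield $|f_k(\mathbf{z})|=|\mathbf{z}|^2\,|f_k(\mathbf{z}/|\mathbf{z}|)|\lesssim|\mathbf{z}|^2$, bypassing the integral Taylor argument entirely; the latter is needed only if you want to avoid invoking \ref{H5} at this stage, but then you must separately justify $\nabla f_k(\mathbf{0})=\mathbf{0}$. Either way, you should state explicitly where \ref{H5} enters --- the lemma assumes \ref{H1}--\ref{H5} precisely because \ref{H1}--\ref{H2} alone are insufficient for the quadratic bound.
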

\begin{proof}
For (i) and (ii) see  Corollary 2.3 and Lemmas 2.10 and 2.11 in  \cite{NoPa2}. Part (iii) is a consequence of \ref{H2} and the Leibniz rule (see Proposition 5.1 in \cite{mtaylor} and Corollary 2.5 in \cite{NoPa2}).
\end{proof}

The next result is a refinement of inequality \eqref{GNI}.

\begin{lemma}\label{lemGNIref}
	Let $\psib\in \mathcal{G}_5(\omega,\betab)$.
For any $\ub \in \mathbf{H}_{x}^1$ and $\xi \in \R^{5}$ we have
\begin{equation*}
    \left|\mathrm{Re}\int_{\R^{5}}F(\ub)\;dx\right|\leq \frac{2}{5}\left[\frac{\Q(\ub)K(\ub)}{\Q(\psib)K(\psib)}\right]^{\frac{1}{4}}K(\ub^{\xi}),
\end{equation*}
where
\begin{equation}\label{Gaugetrans}
	\ub^\xi(x)=\left(e^{i\frac{\alpha_{1}}{\gamma_1}x\cdot\xi}u_1(x),\ldots,e^{i\frac{\alpha_{l}}{\gamma_l}x\cdot\xi}u_l(x)\right).
\end{equation}
\end{lemma}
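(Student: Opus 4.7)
The plan is to combine the sharp Gagliardo-Nirenberg inequality \eqref{GNI} applied to the modulus vector $\!\!\big\bracevert\!\! \ub\!\!\big\bracevert\!\!$ with Kato's pointwise diamagnetic inequality, so that the gauge parameter $\xi$ enters exactly as required. From \textnormal{\ref{H6}} and \textnormal{\ref{H7}} we have $\left|\mathrm{Re}\int F(\ub)\,dx\right|\leq P(\!\!\big\bracevert\!\! \ub\!\!\big\bracevert\!\!)$ with $P(\!\!\big\bracevert\!\! \ub\!\!\big\bracevert\!\!)\geq 0$; we may assume $P(\!\!\big\bracevert\!\! \ub\!\!\big\bracevert\!\!)>0$ (otherwise the claim is trivial), so $\!\!\big\bracevert\!\! \ub\!\!\big\bracevert\!\!\in\mathcal{P}$ and \eqref{GNI} in dimension $n=5$, together with the obvious $\Q(\!\!\big\bracevert\!\! \ub\!\!\big\bracevert\!\!)=\Q(\ub)$, yields
$$
P(\!\!\big\bracevert\!\! \ub\!\!\big\bracevert\!\!)\leq C_{5}^{opt}\,\Q(\ub)^{1/4}K(\!\!\big\bracevert\!\! \ub\!\!\big\bracevert\!\!)^{5/4}.
$$

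To bring $\xi$ into the picture, observe that the gauge transformation \eqref{Gaugetrans} preserves the pointwise modulus, $|u_k^\xi(x)|=|u_k(x)|$, so $\nabla|u_k|=\nabla|u_k^\xi|$ almost everywhere; applying the standard diamagnetic inequality $|\nabla|w||\leq|\nabla w|$ to each $u_k^\xi$ and summing gives
$$
K(\!\!\big\bracevert\!\! \ub\!\!\big\bracevert\!\!)=\sum_{k=1}^{l}\gamma_{k}\|\nabla|u_k^\xi|\|_{L^{2}}^{2}\leq K(\ub^\xi),
$$
while the same argument with $\xi=0$ also gives $K(\!\!\big\bracevert\!\! \ub\!\!\big\bracevert\!\!)\leq K(\ub)$. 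Splitting $K(\!\!\big\bracevert\!\! \ub\!\!\big\bracevert\!\!)^{5/4}=K(\!\!\big\bracevert\!\! \ub\!\!\big\bracevert\!\!)^{1/4}\cdot K(\!\!\big\bracevert\!\! \ub\!\!\big\bracevert\!\!)$ and applying these two bounds to the respective factors,
$$
\left|\mathrm{Re}\int_{\R^{5}} F(\ub)\,dx\right|\leq C_{5}^{opt}\,\Q(\ub)^{1/4}K(\ub)^{1/4}\,K(\ub^\xi).
$$

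To identify the constant, I need $C_{5}^{opt}\,[\Q(\psib)K(\psib)]^{1/4}=2/5$. Testing the ground state system \eqref{systemelip} (with $\boldsymbol\beta$ absorbed into the $\lambda_{k}$) against $\overline{\psi}_{k}$ and against $x\cdot\nabla\overline{\psi}_{k}$, then using Lemma~\ref{estdiffk}(ii) together with \textnormal{\ref{H5}}, produces the Nehari and Pohozaev identities
$$
K(\psib)+\Q(\psib)=3P(\psib),\qquad \tfrac{3}{2}K(\psib)+\tfrac{5}{2}\Q(\psib)=5P(\psib),
$$
from which $K(\psib)=\tfrac{5}{2}P(\psib)=5\Q(\psib)$. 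Substituting into the explicit formula \eqref{bestCn} for $C_{5}^{opt}=2\,\Q(\psib)^{-1/2}/5^{5/4}$ confirms $C_{5}^{opt}\,\Q(\psib)^{1/4}K(\psib)^{1/4}=2/5$, and multiplying and dividing the previous display by $[\Q(\psib)K(\psib)]^{1/4}$ delivers the stated inequality.

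The delicate step is the diamagnetic bound $K(\!\!\big\bracevert\!\! \ub\!\!\big\bracevert\!\!)\leq K(\ub^\xi)$: without the mass-resonance hypothesis the full kinetic energy $K(\ub)$ is not invariant under \eqref{Gaugetrans}, but the modulus $|u_{k}|$ (hence $\Q$ and $P(\!\!\big\bracevert\!\! \ub\!\!\big\bracevert\!\!)$) is, so all of the $\xi$-dependence is absorbed cleanly through this pointwise comparison. This is precisely the flexibility that will later permit $\xi$ to be chosen so as to neutralize the momentum of $\ub$ in the interaction Morawetz estimate.
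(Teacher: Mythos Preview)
Your proof is correct and follows essentially the same route as the paper's: apply \eqref{GNI} to $\!\!\big\bracevert\!\! \ub\!\!\big\bracevert\!\!$, identify $C_5^{opt}$ via the ground-state relation $K(\psib)=5\,\Q(\psib)$, and exploit that the gauge \eqref{Gaugetrans} preserves moduli. Your explicit diamagnetic splitting $K(\!\!\big\bracevert\!\! \ub\!\!\big\bracevert\!\!)^{5/4}=K(\!\!\big\bracevert\!\! \ub\!\!\big\bracevert\!\!)^{1/4}\cdot K(\!\!\big\bracevert\!\! \ub\!\!\big\bracevert\!\!)\leq K(\ub)^{1/4}K(\ub^{\xi})$ is in fact a bit cleaner than the paper's presentation, which reaches the same conclusion by first replacing $\ub\mapsto\ub^{\xi}$ in \eqref{C5.1} and then taking an infimum over $\xi$.
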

\begin{proof}
First note if $\boldsymbol{\psi}$ is a solution of \eqref{systemelip} (with $n=5$) then
		\begin{equation*}
			P(\boldsymbol{\psi})=2I(\boldsymbol{\psi}), \quad K(\boldsymbol{\psi})=5I(\boldsymbol{\psi}), \quad \mbox{and}\quad \mathcal{Q}(\boldsymbol{\psi})=I(\boldsymbol{\psi}).
		\end{equation*}
Thus, since $K(\psib)=5\Q(\psib)$  the best constant \eqref{bestCn} can be expressed as
\begin{equation}\label{C5}
    C_{5}^{opt}=\frac{2}{5}\frac{1}{\Q(\psib)^{\frac{1}{4}}K(\psib)^{\frac{1}{4}}}. 
\end{equation}
Hence, from \eqref{GNI},
\begin{equation}\label{C5.1}
	 |P(\!\!\big\bracevert\!\! \mathbf{u}\!\!\big\bracevert\!\!)|\leq \frac{2}{5}\left[\frac{\Q(\!\!\big\bracevert\!\!\ub\!\!\big\bracevert\!\!)K(\!\!\big\bracevert\!\!\ub\!\!\big\bracevert\!\!)}{\Q(\psib)K(\psib)}\right]^{\frac{1}{4}}K(\!\!\big\bracevert\!\!\ub\!\!\big\bracevert\!\!)
	 \leq \frac{2}{5}\left[\frac{\Q(\ub)K(\ub)}{\Q(\psib)K(\psib)}\right]^{\frac{1}{4}}K(\ub).
\end{equation}
By replacing $\ub$ by $\ub^\xi$ in \eqref{C5.1} and using that $P(\!\!\big\bracevert\!\! \ub\!\!\big\bracevert\!\!)=P(\!\!\big\bracevert\!\! \ub^\xi\!\!\big\bracevert\!\!)$ we obtain
 \begin{equation*}
     |P(\!\!\big\bracevert\!\! \mathbf{u}\!\!\big\bracevert\!\!)|\leq\frac{2}{5}\left[\frac{\Q(\ub^\xi)K(\ub^{\xi})}{\Q(\psib)K(\psib)}\right]^{\frac{1}{4}}K(\ub^{\xi}).
 \end{equation*}
Now, by taking the infimum  over all $\xi\in\R^5$ we get
 \begin{equation*}
     \begin{split}
      |P(\!\!\big\bracevert\!\! \mathbf{u}\!\!\big\bracevert\!\!)| &\leq \frac{2}{5}  \inf_{\xi\in \R^{5}} \left\{ \left[\frac{\Q(\ub^\xi)K(\ub^{\xi})}{\Q(\psib)K(\psib)}\right]^{\frac{1}{4}}K(\ub^{\xi})\right\}\\
       &\leq\frac{2}{5}  \inf_{\xi\in \R^{5}} \left\{ \left[\frac{\Q(\ub^\xi)K(\ub^{\xi})}{\Q(\psib)K(\psib)}\right]^{\frac{1}{4}}\right\}\times \inf_{\xi\in \R^{5}} K(\ub^{\xi})\\
      &\leq \frac{2}{5}\left[\frac{\Q(\ub)K(\ub)}{\Q(\psib)K(\psib)}\right]^{\frac{1}{4}}   K(\ub^{\xi}).
         \end{split}
 \end{equation*}
The lemma then follows from hypothesis \ref{H6}.
\end{proof}

 \subsection{Coercivity lemmas}\label{subseccoerxci}

We finish this section with some coercivity lemmas.  Note that in the case $\omega=1$ and $\boldsymbol{\beta}=\mathbf{0}$ the functionals $Q$ and $\mathcal{Q}$ coincides.
\begin{lemma}[Coercivity I]\label{thm:lemcoerI} Let $n=5$. 
 	Assume $\mathbf{u}_{0}\in \mathbf{H}_{x}^{1}$  and let $\mathbf{u}$ be the corresponding solution of system \eqref{system1} with maximal existence interval $I$. Let  $\boldsymbol{\psi}\in \mathcal{G}_{5}(1,\mathbf{0})$ be a ground state. If 
 \begin{equation}\label{b-1}
   Q(\mathbf{u}_{0})	E_{\boldsymbol{\beta}}(\mathbf{u}_{0})<(1-\widetilde{\delta})Q(\boldsymbol{\psi})\E(\boldsymbol{\psi})
\end{equation}
and
\begin{equation}\label{b-2}
  Q(\mathbf{u}_{0})K(\mathbf{u}_{0})\leq Q(\boldsymbol{\psi})K(\boldsymbol{\psi}),  
\end{equation}
then there exist $\delta'>0$, depending on $\widetilde{\delta}$, such that
\begin{equation*}
  Q(\mathbf{u}_{0})K(\mathbf{u}(t))<(1-\delta') Q(\boldsymbol{\psi})K(\boldsymbol{\psi}),  
\end{equation*}
for all $t\in I$. In particular, $I=\R$ and $\ub$ is uniformly bounded in $\mathbf{H}_{x}^{1}$. 
 \end{lemma}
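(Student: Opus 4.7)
\medskip
\noindent\textbf{Proof sketch (plan).}
The plan is the usual continuity/bootstrap argument based on the refined Gagliardo--Nirenberg inequality of Lemma \ref{lemGNIref}. First I would collect the algebraic identities satisfied by a ground state $\psib\in\mathcal{G}_5(1,\mathbf{0})$: from the Pohozaev-type relations recalled in the proof of Lemma \ref{lemGNIref}, namely $P(\psib)=2I(\psib)$, $K(\psib)=5I(\psib)$ and $\Q(\psib)=I(\psib)$, and using that for $\omega=1$, $\boldsymbol{\beta}=\mathbf{0}$ the functionals $Q$ and $\Q$ coincide, I obtain the two crucial identities
\begin{equation*}
K(\psib)=5\,Q(\psib),\qquad Q(\psib)\E(\psib)=\tfrac{1}{5}\,Q(\psib)K(\psib).
\end{equation*}

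Next I would feed the energy $E_{\boldsymbol{\beta}}(\ub)=K(\ub)+\sum_k\beta_k\|u_k\|_{L^2}^2-2\mathrm{Re}\,P(\ub)$ into the refined Gagliardo--Nirenberg bound (taking $\xi=0$ in Lemma \ref{lemGNIref}). Since $\beta_k\ge 0$ and by conservation of $Q$ (so $Q(\ub(t))=Q(\ub_0)$), this yields for every $t\in I$
\begin{equation*}
Q(\ub_0)E_{\boldsymbol{\beta}}(\ub(t))\ge Q(\ub_0)K(\ub(t))-\tfrac{4}{5}\bigl[Q(\psib)K(\psib)\bigr]\,y(t)^{5/4},
\end{equation*}
where I set
\begin{equation*}
y(t):=\frac{Q(\ub_0)K(\ub(t))}{Q(\psib)K(\psib)}.
\end{equation*}
Combining this with conservation of the energy and with hypothesis \eqref{b-1}, then dividing through by $Q(\psib)K(\psib)$, I reach the scalar inequality
\begin{equation*}
g(y(t)):=y(t)-\tfrac{4}{5}y(t)^{5/4}<\tfrac{1}{5}(1-\widetilde\delta)\qquad\text{for all }t\in I.
\end{equation*}

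Now the elementary analysis of $g$ closes the argument. The function $g$ is strictly increasing on $[0,1]$ with $g(0)=0$ and $g(1)=1/5$, so there is a unique $y^{\star}=y^{\star}(\widetilde\delta)\in(0,1)$ with $g(y^{\star})=\tfrac{1}{5}(1-\widetilde\delta)$. Hypothesis \eqref{b-2} gives $y(0)\le 1$; since $g(y(0))<g(y^{\star})$ and $g$ is increasing on $[0,1]$, necessarily $y(0)<y^{\star}$. Because $t\mapsto K(\ub(t))$ is continuous on $I$ (as $\ub\in C(I,\mathbf{H}^1_x)$), a standard continuity/bootstrap argument applies: if $y$ ever reached $y^{\star}$ at some first time $t^{\star}\in I$, then $g(y(t^{\star}))=g(y^{\star})=\tfrac{1}{5}(1-\widetilde\delta)$, contradicting the strict inequality above. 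Hence $y(t)<y^{\star}$ for all $t\in I$, which is precisely the desired conclusion with $\delta':=1-y^{\star}>0$. The uniform bound on $K(\ub(t))$, together with conservation of $Q$, bounds $\ub(t)$ uniformly in $\mathbf{H}^1_x$, and the blow-up alternative from the local theory then forces $I=\R$. The only mildly delicate point is verifying that the continuity argument truly traps $y(t)$ below $1$; this is where the strict inequality $g(y(0))<g(y^{\star})$ coming from $\widetilde\delta>0$ (rather than mere non-strictness in \eqref{b-2}) is essential.
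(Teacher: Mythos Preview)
Your proof is correct and follows the standard continuity/bootstrap argument for this type of coercivity result; the paper itself does not give a detailed proof here but simply cites \cite{NoPa4}, where exactly this argument (Gagliardo--Nirenberg combined with the ground-state Pohozaev identities, reduction to the scalar function $g(y)=y-\tfrac{4}{5}y^{5/4}$, and a continuity trap) is carried out. One very minor remark: your phrasing of the continuity step is fine, but it is worth making explicit that the sublevel set $\{g<\tfrac{1}{5}(1-\widetilde\delta)\}$ has two connected components and that $y(0)<y^\star$ places you in the bounded one---this is implicit in your ``first time'' argument but could be stated more directly.
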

\begin{proof}
See Lemma 2.5 in \cite{NoPa4}.
\end{proof}

 Next we will prove a coercivity lemma on balls. We start with the following.

\begin{corollary}\label{remarksup}
	Under the assumptions of Theorem \ref{thm:critscatt}, there exists
	$\delta>0$ such that $$\sup_{t\in \R}Q(\ub_{0})K(\ub(t))<(1-3\delta)^{4}Q(\psib)K(\psib).$$ 
\end{corollary}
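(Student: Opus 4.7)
The plan is straightforward: the statement is essentially a rescaling of the conclusion of Lemma \ref{thm:lemcoerI}, dressed in a form convenient for later use together with the refined Gagliardo--Nirenberg inequality of Lemma \ref{lemGNIref}. First, since the inequality \eqref{desEQgs} in Theorem \ref{thm:critscatt} is strict, I extract a positive gap by setting
\[
\widetilde{\delta} := 1 - \frac{Q(\mathbf{u}_0)\,E_{\boldsymbol{\beta}}(\mathbf{u}_0)}{Q(\boldsymbol{\psi})\,\E(\boldsymbol{\psi})} > 0.
\]
With this choice, hypothesis \eqref{b-1} of Lemma \ref{thm:lemcoerI} is fulfilled, while \eqref{b-2} is immediate (and in fact strict) from \eqref{desQKgs}.

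Applying Lemma \ref{thm:lemcoerI} then yields a constant $\delta' = \delta'(\widetilde{\delta}) > 0$ together with the uniform bound
\[
Q(\mathbf{u}_0)\,K(\mathbf{u}(t)) < (1 - \delta')\,Q(\boldsymbol{\psi})\,K(\boldsymbol{\psi}), \qquad t \in \mathbb{R},
\]
and in particular $\sup_{t \in \mathbb{R}} Q(\mathbf{u}_0)\,K(\mathbf{u}(t)) \leq (1 - \delta')\,Q(\boldsymbol{\psi})\,K(\boldsymbol{\psi})$.

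The remaining step is purely algebraic: I choose any $\delta \in (0, \delta'/12)$. Bernoulli's inequality gives $(1 - 3\delta)^4 \geq 1 - 12\delta > 1 - \delta'$, and hence
\[
\sup_{t \in \mathbb{R}} Q(\mathbf{u}_0)\,K(\mathbf{u}(t)) \leq (1 - \delta')\,Q(\boldsymbol{\psi})\,K(\boldsymbol{\psi}) < (1 - 3\delta)^4\,Q(\boldsymbol{\psi})\,K(\boldsymbol{\psi}),
\]
which is the stated bound. There is no genuine obstacle here; the corollary is only a cosmetic reformulation of Lemma \ref{thm:lemcoerI}, with the exponent four and the factor three rigged precisely so that, once combined with the exponent $1/4$ appearing in Lemma \ref{lemGNIref}, the quantity $[Q(\mathbf{u}_0)K(\mathbf{u}(t))/(Q(\boldsymbol{\psi})K(\boldsymbol{\psi}))]^{1/4}$ can be absorbed into a clean factor $(1 - 3\delta)$ in the scattering arguments of Section \ref{sec.proofscatt}.
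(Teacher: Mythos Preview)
Your approach matches the paper's, which simply records the corollary as an immediate consequence of Lemma \ref{thm:lemcoerI}; you have merely unpacked that step. One small technical slip: with your exact choice $\widetilde{\delta} = 1 - Q(\mathbf{u}_0)E_{\boldsymbol{\beta}}(\mathbf{u}_0)/(Q(\boldsymbol{\psi})\E(\boldsymbol{\psi}))$, hypothesis \eqref{b-1} becomes an \emph{equality} rather than the strict inequality Lemma \ref{thm:lemcoerI} demands, so the lemma cannot be invoked as stated. The fix is trivial---take instead any $\widetilde{\delta}$ strictly between $0$ and that quantity (for instance half of it)---and then the remainder of your argument, including the Bernoulli step to pass from $1-\delta'$ to $(1-3\delta)^4$, goes through verbatim.
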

\begin{proof}
This is an immediate consequence of Lemma \ref{thm:lemcoerI}.
\end{proof}

Now, if $\varepsilon>0$ is a small constant, let us introduce a radial function $\chi\in C_0^\infty(\R^5)$, decreasing on $|x|$, such that $0\leq \chi\leq1$ and
\begin{equation}\label{defchi}
	\chi(x)=\left\{\begin{array}{cl}
		1,& |x|\leq 1-\varepsilon,\\
		0,& |x|\geq 1.
	\end{array}\right.
\end{equation}

\begin{lemma}[Coercivity on balls]\label{lemcoerbal} Let $\delta>0$ be as in Corollary \ref{remarksup}. Then, there exists a sufficiently large $R=R(\delta,Q(\ub),\psib)>0$  such that for all $s\in \R^{5}$,
	\begin{equation*}
		\sup_{t\in \R}Q\left(\chi\left(\frac{\cdot-s}{R}\right)\ub(t)\right)K\left(\chi\left(\frac{\cdot-s}{R}\right)\ub(t)\right)<(1-2\delta)^{4}Q(\psib)K(\psib).
	\end{equation*}
	In particular, by Lemma \ref{lemGNIref}, there exists $R=R(\delta,Q(\ub),\psib)>0$ (probably larger) so that for any $\xi \in \R^{5}$
	\begin{equation*}
		\begin{split}
			\left|\int_{\R^{5}}\mathrm{Re}\, F\left(\chi\left(\frac{x-s}{R}\right)\mathbf{u}(x)\right)\;dx\right|
			&\leq \frac{2}{5}(1-\delta)\int_{\R^{5}}\chi^{2}\left(\frac{x-s}{R}\right)\sum_{k=1}^{l} \gamma_{k}|\nabla u_{k}^{\xi}|^{2}\;dx,
		\end{split}
	\end{equation*}
	uniformly for $t\in \R$. 
\end{lemma}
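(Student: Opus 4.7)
The plan is to derive both inequalities from a single integration-by-parts identity for the cutoff. Set $\tilde\chi(x):=\chi((x-s)/R)$; this is real-valued and satisfies $\|\tilde\chi\,\Delta\tilde\chi\|_{L^\infty}\lesssim R^{-2}$, with an implicit constant depending only on $\|\chi\|_{C^2}$ and therefore independent of $s\in\R^5$. For any $v\in H^1(\R^5)$, a direct computation combining $|\nabla(\tilde\chi v)|^2=\tilde\chi^2|\nabla v|^2+|v|^2|\nabla\tilde\chi|^2+\tilde\chi\nabla\tilde\chi\cdot\nabla|v|^2$ with an integration by parts in the last term gives
$$\|\nabla(\tilde\chi v)\|_{L^2}^2=\int_{\R^5}\tilde\chi^{\,2}|\nabla v|^2\,dx-\int_{\R^5}\tilde\chi\,(\Delta\tilde\chi)\,|v|^2\,dx.$$

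For the first inequality, I would use $0\leq\tilde\chi\leq 1$ and the conservation of $Q$ to obtain $Q(\tilde\chi\ub(t))\leq Q(\ub_0)$, and apply the identity to $v=u_k(t)$ summed with weights $\gamma_k$ to get $K(\tilde\chi\ub(t))\leq K(\ub(t))+CR^{-2}Q(\ub_0)$. Corollary \ref{remarksup} then yields
$$Q(\ub_0)\,K(\tilde\chi\ub(t))<(1-3\delta)^4Q(\psib)K(\psib)+\frac{C\,Q(\ub_0)^2}{R^2}\qquad(t\in\R).$$
Since $(1-2\delta)^4>(1-3\delta)^4$ for $0<\delta<1/3$, choosing $R=R(\delta,Q(\ub_0),\psib)$ sufficiently large drives the right-hand side strictly below $(1-2\delta)^4Q(\psib)K(\psib)$, uniformly in $s\in\R^5$ and $t\in\R$.

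For the second inequality, I would apply Lemma \ref{lemGNIref} to $\tilde\chi\ub(t)$, observing that $(\tilde\chi\ub)^\xi=\tilde\chi\ub^\xi$ since $\tilde\chi$ is real. The first inequality then gives
$$\left|\mathrm{Re}\int_{\R^5}F(\tilde\chi\ub)\,dx\right|\leq\frac{2}{5}\left[\frac{Q(\tilde\chi\ub)K(\tilde\chi\ub)}{Q(\psib)K(\psib)}\right]^{1/4}K(\tilde\chi\ub^\xi)<\frac{2}{5}(1-2\delta)\,K(\tilde\chi\ub^\xi),$$
while a second use of the identity with $v=u_k^\xi$ (using $|u_k^\xi|=|u_k|$) produces
$$K(\tilde\chi\ub^\xi)\leq\int_{\R^5}\tilde\chi^{\,2}\sum_{k=1}^l\gamma_k|\nabla u_k^\xi|^2\,dx+\frac{C\,Q(\ub_0)}{R^2}.$$

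The main obstacle is trading the multiplicative factor $(1-2\delta)$ for $(1-\delta)$ while absorbing the additive $O(R^{-2})$ error. I would iterate the first step: by enlarging $R$ further one secures the stronger bound $Q(\ub_0)K(\tilde\chi\ub(t))<(1-2\delta-\eta)^4Q(\psib)K(\psib)$ for a small $\eta=\eta(\delta)>0$, so the refined-GN prefactor improves to $(1-2\delta-\eta)$. This opens a gap of size $\delta+\eta$ between the prefactor and $(1-\delta)$ that accommodates the $R^{-2}$ error whenever the localized kinetic energy $\int\tilde\chi^{\,2}\sum_k\gamma_k|\nabla u_k^\xi|^2\,dx$ is at least of order $R^{-2}$. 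In the degenerate regime where this integral is smaller, the pointwise bound $|F(\ub)|\lesssim\sum_k|u_k|^3$ of Lemma \ref{estdiffk}(i), combined with the Sobolev embedding $H^1(\R^5)\hookrightarrow L^3(\R^5)$ and $|c_k|\lesssim R^{-5/2}Q(\ub_0)^{1/2}$ for the near-constant values of $u_k^\xi$ on the ball, yields directly $|\mathrm{Re}\int F(\tilde\chi\ub)\,dx|=O(R^{-5/2})$, a quantity easily dominated by the right-hand side for $R$ large. Combining the two regimes gives the desired inequality for a sufficiently large $R=R(\delta,Q(\ub_0),\psib)$, uniformly in $\xi,s\in\R^5$ and $t\in\R$.
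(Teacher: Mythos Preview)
Your first inequality is correct and matches the paper exactly: the integration-by-parts identity, the bound $Q(\tilde\chi\ub)\le Q(\ub_0)$, Corollary~\ref{remarksup}, and absorption of the $O(R^{-2})$ error by enlarging $R$. For the second inequality, the opening steps---Lemma~\ref{lemGNIref} on $\tilde\chi\ub$, the first part to extract the prefactor $(1-2\delta)$, and identity~\eqref{identK} with $v=u_k^\xi$---also coincide with the paper's brief indication (``a direct consequence of Lemma~\ref{lemGNIref} combined with~\eqref{identK} and a similar argument as above'').

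The gap is in your two-regime absorption. In the degenerate regime you bound the left side by $O(R^{-5/2})$ and declare this ``easily dominated by the right-hand side for $R$ large''. But the right-hand side $\int\tilde\chi^{\,2}\sum_k\gamma_k|\nabla u_k^\xi|^2\,dx$ is in that regime merely \emph{below} your threshold $cR^{-2}$, not comparable to it; nothing prevents it from being arbitrarily small or zero (as a quadratic in $\xi$ it has no positive lower bound uniform in $s$ and $t$---indeed it vanishes whenever each $u_k^\xi$ happens to be constant on the ball). In that case the target inequality reads $O(R^{-5/2})\le 0$, which fails. Your Poincar\'e-based heuristic controls the left side from above but supplies no \emph{lower} bound on the right side, which is what multiplicative absorption would require. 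What the chain of inequalities cleanly delivers is
\[
\left|\mathrm{Re}\int F(\tilde\chi\ub)\,dx\right|\le\tfrac{2}{5}(1-2\delta)\,K(\tilde\chi\ub^\xi)=\tfrac{2}{5}(1-2\delta)\int\tilde\chi^{\,2}\sum_k\gamma_k|\nabla u_k^\xi|^{2}\,dx+O(R^{-2}),
\]
and this is precisely the form actually consumed in Proposition~\ref{viri-Morz}, where the additive $O(R^{-2})$ is handled by the logarithmic averaging in $R$. The regime split is therefore unnecessary for the application and does not close as written.
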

\begin{proof}
	First we observe that  integrating by parts and using that $\chi$ has  compact support, we obtain
	\begin{equation}\label{intequal}
		\int \chi^{2} |\nabla u_{k}|^{2}\; dx=\int  |\nabla (\chi u_{k})|^{2}\;dx +\int \chi\Delta (\chi)|u_{k}|^{2}\;dx. 
	\end{equation}
	Multiplying by $\gamma_{k}$ and summing over $k$ we have
	\begin{equation}\label{identK}
		\int \chi^{2}\sum_{k=1}^{l} \gamma_{k}|\nabla u_{k}|^{2}\; dx= K(\chi \ub) +\int \chi\Delta (\chi)\sum_{k=1}^{l}\gamma_{k}|u_{k}|^{2}\;dx.
	\end{equation}
By replacing $\chi$ by $\chi\left(\frac{\cdot-s}{R}\right)$ in \eqref{identK} and using that  $\left|\Delta \chi\left(\frac{x-s}{R}\right)\right|\leq \frac{C}{R^{2}}$, we arrive to 
	
	\begin{equation}\label{b3}
		\begin{split}
			K\left(\chi\left(\frac{x-s}{R}\right)\ub\right) &= \int \chi^{2}\left(\frac{x-s}{R}\right)\sum_{k=1}^{l} \gamma_{k}|\nabla u_{k}|^{2}\; dx -\int \chi\left(\frac{x-s}{R}\right)\Delta \left(\chi\left(\frac{x-s}{R}\right)\right)\sum_{k=1}^{l}\gamma_{k}|u_{k}|^{2}\;dx\\
			&\leq K(\ub)+\frac{C}{R^{2}}Q(\ub_{0}). 
		\end{split}
	\end{equation}
	Thus by noting that  $Q\left(\chi\left(\frac{x-s}{R}\right)\ub(t)\right)\leq Q(\ub(t))=Q(\ub_{0})$, from \eqref{b3} and Corollary \ref{remarksup}  we deduce 
	
	\begin{equation*}
		\begin{split}
			Q\left(\chi\left(\frac{x-s}{R}\right)\ub(t)\right)K\left(\chi\left(\frac{x-s}{R}\right)\ub(t)\right)&\leq Q(\ub_{0})K\left(\chi\left(\frac{x-s}{R}\right)\ub(t)\right)\\
			&\leq Q(\ub_{0})\left[K(\ub(t))+\frac{C}{R^{2}}Q(\ub_{0})\right]\\
			&\leq Q(\ub_{0})K(\ub(t))+\frac{C}{R^{2}}Q(\ub_{0})^{2}\\
			&<(1-3\delta)^{4}Q(\psib)K(\psib)+\frac{C}{R^{2}}Q(\ub_{0})^{2}.
		\end{split}
	\end{equation*}
	Choosing $R$ sufficiently large  such that $\frac{C}{R^{2}}Q(\ub_{0})^{2}<[(1-2\delta)^{4}-(1-3\delta)^{4}]
	Q(\psib)K(\psib)$, we obtain the desired.
	
	The second conclusion of the lemma is a direct consequence of Lemma \ref{lemGNIref} combined with  \eqref{identK} and a similar argument as above. The result is thus established.
\end{proof}

 \section{Scattering criterion}\label{sec.scattcrit}
 This section is devoted to prove the scattering criterion we use to prove Theorem \ref{thm:critscatt}.
 We will be concerned only with scattering forward in time; in a similar fashion we may also prove the scattering backward in time.

\begin{lemma}\label{scat1}
 Let $\ub$ be a global solution of \eqref{system1} satisfying
 \begin{equation}\label{unifbond}
   \sup_{t\in\R}\|\ub(t)\|_{\Hb^{1}_{x}}\lesssim E_0,  
 \end{equation}
 for some constant $E_0>0$. If
 \begin{equation}\label{sccrit}
 		\|\ub\|_{\Lb_t^{6}\Lb_x^{3}(\R\times\R^5)}<\infty
 \end{equation}
then $\ub$ scatters.
 \end{lemma}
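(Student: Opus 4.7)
The plan is to reduce the problem to a standard Strichartz bootstrap and Duhamel argument once the global bound $\|\ub\|_{\Lb^6_t\Lb^3_x(\R\times\R^5)}<\infty$ is in hand. Because $f_k$ is quadratic in $\ub$ by Lemma \ref{estdiffk}(i), the spacetime norm $\Lb^6_t\Lb^3_x$ (which has the right scaling to interact with $|\ub|^2$ placed in the $\sigma=5/2$--acceptable space $\Lb^3_t\Lb^{3/2}_x$ of Proposition \ref{stricha}(iii)) is exactly what controls all other Strichartz norms of $\ub$. First I would partition $\R$ into finitely many subintervals $I_j=[t_j,t_{j+1}]$ on each of which
\[
\|\ub\|_{\Lb^6_t\Lb^3_x(I_j\times\R^5)}\leq\eta,
\]
with $\eta>0$ to be chosen small; this is possible because the total $\Lb^6_t\Lb^3_x$-norm is finite.

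Next I would run the bootstrap on the admissible pair $(q,r)=(12/5,3)$ (so that $2/q+5/r=5/2$ and $r\in[2,10/3]$). On the Duhamel formula \eqref{system2}, Strichartz (Proposition \ref{stricha}) combined with the dual pair $(12/7,3/2)$ yields, on $I_j$,
\[
\|u_k\|_{L^{12/5}_tL^3_x(I_j)}+\|\nabla u_k\|_{L^{12/5}_tL^3_x(I_j)}
\lesssim \|u_k(t_j)\|_{H^1}+\|f_k(\ub)\|_{L^{12/7}_tL^{3/2}_x(I_j)}+\|\nabla f_k(\ub)\|_{L^{12/7}_tL^{3/2}_x(I_j)}.
\]
Bounding the nonlinearity by Lemma \ref{estdiffk}(i),(iii) and Hölder with the splits $1/(12/7)=1/6+5/12$ and $1/(3/2)=1/3+1/3$,
\[
\|f_k(\ub)\|_{L^{12/7}_tL^{3/2}_x(I_j)}+\|\nabla f_k(\ub)\|_{L^{12/7}_tL^{3/2}_x(I_j)}\lesssim \|\ub\|_{\Lb^6_t\Lb^3_x(I_j)}\bigl(\|\ub\|_{\Lb^{12/5}_t\Lb^3_x(I_j)}+\|\nabla\ub\|_{\Lb^{12/5}_t\Lb^3_x(I_j)}\bigr).
\]
Using \eqref{unifbond} to dominate the initial term by $E_0$ and absorbing the factor $\eta$ (via a standard continuity-in-the-endpoint argument on the Strichartz norm) closes the estimate on $I_j$. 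Summing over the finitely many pieces gives the global bounds
\[
\|\ub\|_{\Lb^{12/5}_t\Lb^3_x(\R\times\R^5)}+\|\nabla\ub\|_{\Lb^{12/5}_t\Lb^3_x(\R\times\R^5)}<\infty.
\]

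With global Strichartz control in hand, the scattering state is constructed in the standard way: define
\[
u_k^{+}:=u_{k0}+\frac{i}{\alpha_k}\int_0^{\infty}U_k(-s)f_k(\ub(s))\,ds,
\]
which lies in $H^1(\R^5)$ because the integrand is dual-Strichartz integrable by the bounds just obtained. The difference
\[
u_k(t)-U_k(t)u_k^{+}=-\frac{i}{\alpha_k}\int_t^{\infty}U_k(t-s)f_k(\ub(s))\,ds
\]
is then estimated by Strichartz with the endpoint admissible pair $(\infty,2)$ as
\[
\|u_k(t)-U_k(t)u_k^{+}\|_{H^1}\lesssim \|f_k(\ub)\|_{L^{12/7}_tL^{3/2}_x([t,\infty))}+\|\nabla f_k(\ub)\|_{L^{12/7}_tL^{3/2}_x([t,\infty))},
\]
and the right-hand side tends to $0$ as $t\to\infty$ by absolute continuity of the integral, using the global finiteness proven in the previous step together with the Hölder bound above. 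Backward-in-time scattering is identical with $-\infty$ in place of $+\infty$.

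The only non-trivial step is the bootstrap: one must pick admissible exponents so that (a) the quadratic nonlinearity, split by Hölder, produces one factor in the distinguished $\Lb^6_t\Lb^3_x$ norm (giving the small constant $\eta$) and one factor in the admissible Strichartz norm one is trying to bound, and simultaneously (b) the Leibniz estimate of Lemma \ref{estdiffk}(iii) lands in the \emph{same} dual-Strichartz space. The choice $(12/5,3)$ accomplishes both; once selected, the rest is routine.
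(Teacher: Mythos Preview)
Your proof is correct and follows essentially the same route as the paper: both decompose $\R$ into finitely many intervals on which the $\Lb^6_t\Lb^3_x$-norm is small, run a Strichartz bootstrap with the admissible pair $(12/5,3)$ and dual $(12/7,3/2)$ using Lemma~\ref{estdiffk}(iii) and the H\"older split $\tfrac{7}{12}=\tfrac{1}{6}+\tfrac{5}{12}$, $\tfrac{2}{3}=\tfrac{1}{3}+\tfrac{1}{3}$, and then construct $u_k^{+}$ via Duhamel and estimate the tail with the $(\infty,2)$ endpoint. The only cosmetic difference is that the paper phrases the bootstrap in $\Wb^{1,3}_x$ rather than separating $u_k$ and $\nabla u_k$.
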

\begin{proof}
We will prove that $\ub$ scatters forward in time. We first claim that
\begin{equation}
	\|\ub\|_{\Lb^{\frac{12}{5}}_{t}\Wb^{1,3}_{x}(\R\times\R^5)}<\infty.
\end{equation}
Indeed, we decompose $\R=\bigcup_{j=1}^JI_j$ such that
\begin{equation*}
		\|\ub\|_{\Lb_t^{6}\Lb_x^{3}(I_j\times\R^5)}<\delta,
\end{equation*}
where $\delta>0$ is a small constant to be determined later. Thus, since $\left(\frac{12}{5},3\right)$ is admissible, Strichartz estimate, \eqref{lei11} and H\"older's inequality give
\begin{equation}\label{w13es}
\begin{split}
		\|u_k\|_{L^{\frac{12}{5}}_{t}W^{1,3}_{x}(I_j\times\R^5)}&\lesssim \|u_{k0}\|_{H^{1}_{x}}+\|f_k(\ub)\|_{L^{\frac{12}{7}}_{t}W^{1,\frac{3}{2}}_{x}(I_j\times\R^5)}\\
		&\lesssim \|u_{k0}\|_{H^{1}_{x}}+	\|\ub\|_{\Lb_t^{6}\Lb_x^{3}(I_j\times\R^5)} \|\ub\|_{\Lb^{\frac{12}{5}}_{t}\Wb^{1,3}_{x}(I_j\times\R^5)}\\
		& \lesssim \|u_{k0}\|_{H^{1}_{x}}+ \delta \|\ub\|_{\Lb^{\frac{12}{5}}_{t}\Wb^{1,3}_{x}(I_j\times\R^5)}.
\end{split}
\end{equation}
By summing over $k$ and choosing $\delta$ small enough we see that $\|\ub\|_{\Lb^{\frac{12}{5}}_{t}\Wb^{1,3}_{x}(I_j\times\R^5)}\lesssim E_0$. A summation on $j$ then yields the claim.

Now we define
\begin{equation*}
	u_{k}^{+}:=u_{k0}+i\int_{0}^{+\infty}U_{k}(-s) \frac{1}{\alpha_{k}}f_{k}(\mathbf{u})\;ds,
\end{equation*}
Since $(\infty,2)$ is admissible it is not difficult to check that the above integral converges in $H^1(\R^5)$. A straightforward calculation gives,
\begin{equation*}
	\begin{split}
		u_{k}(t)-U_{k}(t)u_{k}^{+}=i\int_{t}^{\infty}U_{k}(t-s)\frac{1}{\alpha_{k}}f_{k}(\ub(s))\;ds.
	\end{split}
\end{equation*}
Hence,  since $(\infty,2)$ is admissible, as in \eqref{w13es} we deduce
\begin{equation*}
	\|u(t)-U_{k}(t)u_{k}^{+}\|_{H^{1}_{x}(\R^{5})}\lesssim \|\ub\|_{\Lb_t^6\Lb_x^3([t,+\infty)\times\R^5)}\|\ub\|_{\Lb_t^{\frac{12}{5}}\mathbf{W}_x^{1,3}([t,+\infty)\times\R^5)},
\end{equation*}
By taking the limit as $t\to+\infty$  in the last inequality we see that right-hand side goes to zero and the proof is completed. 
\end{proof}

The following result is an adapted version of Theorem 3.1 in \cite{dodsonmurphy2018}, see also \cite[Proposition 3.1]{mengxu2020}.

\begin{theorem}\label{scatteringcriteio}
Let $\ub$ be a global solution of \eqref{system1} satisfying
\begin{equation}\label{gloestH1}
	\sup_{t\in\R}\|\ub(t)\|_{\Hb^{1}_{x}}\lesssim E_0,
\end{equation}
for some constant $E_0>0$. Suppose that for any $a\in\R$, there is $t_0\in(a,a+T_0)$ such that
\begin{equation}\label{t_0T_0}
	\|\ub\|_{\Lb_t^{6}\Lb_x^{3}([t_0-\varepsilon^{-\alpha},t_0]\times\R^5)}\lesssim \varepsilon^{\mu},
\end{equation}
where $\varepsilon=\varepsilon(E_0)>0$ is sufficiently small, $T_0=T_0(\varepsilon,E_0)$ is sufficiently large and  $0<\alpha,\mu\leq1$. Then $\ub$ scatters forward in time.
\end{theorem}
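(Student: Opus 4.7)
The plan is to follow the Dodson--Murphy scheme. By Lemma \ref{scat1} and the uniform $\Hb^{1}$ bound \eqref{gloestH1}, it suffices to show $\|\ub\|_{\Lb_t^{6}\Lb_x^{3}(\R\times\R^{5})}<\infty$; I treat the forward direction $[0,\infty)$, the backward case being symmetric. The core idea is to transfer the smallness of $\ub$ on the intervals $I_j=[t_j-\varepsilon^{-\alpha},t_j]$ (guaranteed by the hypothesis) to a Strichartz bootstrap that closes on $[t_0,\infty)$ for an appropriately chosen reference time $t_0$.

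\textbf{Setup and estimates.} Applying the hypothesis at $a=T$ for a large $T>0$, one obtains $t_0\in(T,T+T_0)$ with $\|\ub\|_{\Lb_t^{6}\Lb_x^{3}(I_0\times\R^{5})}\lesssim \varepsilon^{\mu}$, where $I_0=[\tau_0,t_0]$ and $\tau_0:=t_0-\varepsilon^{-\alpha}$. For $t\geq t_0$, the Duhamel identity based at $\tau_0$ gives
\begin{equation*}
u_k(t) = U_k(t-\tau_0)u_k(\tau_0) + i\int_{\tau_0}^{t_0}U_k(t-s)\frac{1}{\alpha_k}f_k(\ub)\,ds + i\int_{t_0}^{t}U_k(t-s)\frac{1}{\alpha_k}f_k(\ub)\,ds.
\end{equation*}
Taking $\Lb_t^{6}\Lb_x^{3}$ norms on a sub-interval $J\subseteq[t_0,\infty)$, applying Proposition \ref{stricha}(iii) with base point $\tau_0$ on the interval $[\tau_0,\infty)$ (and restricting to $J$), and using the pointwise bound $|f_k(\ub)|\lesssim |\ub|^{2}$ of Lemma \ref{estdiffk}(i) together with H\"older to pass from $\Lb_t^{3}\Lb_x^{3/2}$ to $\Lb_t^{6}\Lb_x^{3}$-squared, I obtain
\begin{equation*}
\|\ub\|_{\Lb_t^{6}\Lb_x^{3}(J)} \lesssim \|U_k(\cdot-\tau_0)u_k(\tau_0)\|_{\Lb_t^{6}\Lb_x^{3}(J)} + \varepsilon^{2\mu} + \|\ub\|_{\Lb_t^{6}\Lb_x^{3}([t_0,\sup J])}^{2}.
\end{equation*}
Since $(6,3)$ is $\dot{H}^{1/2}$-admissible, the global free Strichartz norm is controlled by $\|u_k(\tau_0)\|_{H^{1/2}}\lesssim E_0<\infty$.

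\textbf{Bootstrap and main obstacle.} To close the bootstrap on $[t_0,\infty)$, I partition this half-line into finitely many sub-intervals $[a_m,a_{m+1}]$, $m=0,\ldots,N-1$, with $a_0=t_0$ and $N\lesssim (E_0/\eta)^{6}$, chosen so that $\|U_k(\cdot-\tau_0)u_k(\tau_0)\|_{\Lb_t^{6}\Lb_x^{3}([a_m,a_{m+1}])}\leq \eta$ for a small $\eta>0$ to be fixed; this is possible because the total free Strichartz norm is finite. Inductive bootstrapping over $m$ then yields
\begin{equation*}
\|\ub\|_{\Lb_t^{6}\Lb_x^{3}([a_m,a_{m+1}])} \lesssim \eta + \varepsilon^{2\mu} + \|\ub\|_{\Lb_t^{6}\Lb_x^{3}([t_0,a_m])}^{2},
\end{equation*}
with the base case controlled by the smallness on $I_0$. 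The compact piece $[0,t_0]$ (of length $\leq T+T_0$) is handled by standard local Strichartz theory together with the a priori $\Hb_x^{1}$ bound via iteration on short subintervals. The main obstacle is the delicate balance in this inductive step: the smallness of $\eta+\varepsilon^{2\mu}$ has to dominate the accumulation of the quadratic tail $\|\ub\|_{\Lb_t^{6}\Lb_x^{3}([t_0,a_m])}^{2}$ across $N$ iterations, which forces a careful choice of $\eta,\varepsilon,T_0$ depending on $E_0$, $\alpha$, and $\mu$. Closing this bootstrap gives $\|\ub\|_{\Lb_t^{6}\Lb_x^{3}([0,\infty))}<\infty$, after which Lemma \ref{scat1} delivers forward-in-time scattering, and the backward case is symmetric.
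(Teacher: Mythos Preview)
Your overall architecture is right, but the bootstrap you set up on $[t_0,\infty)$ does not close, and the gap is exactly the piece where the Dodson--Murphy argument does real work. The linear term $U_k(\cdot-\tau_0)u_k(\tau_0)$ has global $L^6_tL^3_x$ norm of size $\sim E_0$, not $\varepsilon^\nu$; partitioning $[t_0,\infty)$ into $N\lesssim (E_0/\eta)^6$ pieces on which this free piece is $\le\eta$ fixes $N$ as a function of $E_0$ and $\eta$ only. In your inductive step the quadratic tail is $\|\ub\|^2_{\Lb^6_t\Lb^3_x([t_0,a_{m}])}$, and after $m$ successful steps this is at least of order $(m^{1/6}\eta)^2$. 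For the induction to survive up to $m=N$ you would need $\eta\, N^{1/3}\ll 1$, i.e.\ $E_0^2\lesssim \eta$, which is incompatible with the smallness of $\eta$ once $E_0$ is not itself small. No choice of $\varepsilon,T_0$ helps here, because $\varepsilon$ only controls the $\varepsilon^{2\mu}$ term, not the linear one.

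What is missing is a mechanism that makes the \emph{linear} part genuinely small on $[t_0,\infty)$, and this is precisely where the dispersive estimate enters. In the paper the Duhamel term coming from the far past, $F_1=\int_0^{t_0-\varepsilon^{-\alpha}}U_k(t-s)f_k(\ub)\,ds$, is estimated on $[t_0,\infty)$ via Lemma~\ref{dispest}: since $|t-s|\ge \varepsilon^{-\alpha}$ one gets $\|F_1\|_{L^3_tL^6_x}\lesssim \varepsilon^{\alpha/3}$, and interpolation with the trivial $L^\infty_tL^2_x$ bound gives $\|F_1\|_{L^6_tL^3_x}\lesssim \varepsilon^{\alpha/6}$ (this is Lemma~\ref{firstlemma}). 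Combined with a decomposition of $\R$ into finitely many intervals $I_j$ on which $\|U_k(\cdot)u_{k0}\|_{L^6_tL^3_x(I_j)}<\varepsilon^\nu$, one obtains $\|U_k(\cdot-t_0)u_k(t_0)\|_{L^6_tL^3_x((t_0,b_j))}\lesssim \varepsilon^\nu$, and then a \emph{single} continuity argument on $(t_0,b_j)$ closes with no accumulation. Your proposal skips the dispersive step entirely; if you insert it (i.e.\ prove the analogue of Lemma~\ref{firstlemma} and decompose with respect to $U_k(t)u_{k0}$ rather than $U_k(t-\tau_0)u_k(\tau_0)$), the rest of your outline becomes essentially the paper's proof.
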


Before establishing Theorem \ref{scatteringcriteio} we prove the following.

\begin{lemma}\label{firstlemma}
	Let $\ub$ be as in Theorem \ref{scatteringcriteio}. If \eqref{t_0T_0} holds then for any $a\in\R$, there is $t_0\in(a,a+T_0)$ such that
\begin{equation}\label{esL6L3}
	\left\|\int_0^{t_0}U_k(t-s)f_k(\ub)ds \right\|_{L_t^{6}L_x^{3}([t_0,\infty)\times\R^5)}\lesssim \varepsilon^\nu, \quad k=1,\ldots,l,
\end{equation}
where $\nu=\min\{\mu,\frac{\alpha}{6}\}$.
\end{lemma}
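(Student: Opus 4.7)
The plan is to decompose the Duhamel integral into a ``near'' part and a ``far'' part relative to the evaluation times $t\ge t_0$, by splitting the $s$-integral at $s=t_0-\varepsilon^{-\alpha}$:
\[
\int_0^{t_0} U_k(t-s) f_k(\ub(s))\,ds = \int_0^{t_0-\varepsilon^{-\alpha}} U_k(t-s) f_k(\ub)\,ds + \int_{t_0-\varepsilon^{-\alpha}}^{t_0} U_k(t-s) f_k(\ub)\,ds.
\]
The near piece is controlled by the smallness hypothesis \eqref{t_0T_0}; the far piece has a built-in temporal separation $t-s\ge\varepsilon^{-\alpha}$ which will provide decay through the dispersive estimate.

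For the near piece, I would extend the source by zero outside $[t_0-\varepsilon^{-\alpha},t_0]$ and apply Proposition~\ref{stricha}(iii) on $I=[t_0-\varepsilon^{-\alpha},\infty)$ with initial time $t_0-\varepsilon^{-\alpha}$. Since for $t\ge t_0$ the extended integrand coincides with the original on $[t_0-\varepsilon^{-\alpha},t_0]$, this gives
\[
\Bigl\|\int_{t_0-\varepsilon^{-\alpha}}^{t_0} U_k(t-s) f_k(\ub)\,ds\Bigr\|_{L^6_tL^3_x([t_0,\infty)\times\R^5)} \lesssim \|f_k(\ub)\|_{L^3_t L^{3/2}_x([t_0-\varepsilon^{-\alpha},t_0]\times\R^5)}.
\]
Using the pointwise bound $\|f_k(\ub(s))\|_{L^{3/2}_x}\lesssim\|\ub(s)\|_{\mathbf{L}^3_x}^2$ from Lemma~\ref{estdiffk}(i), the right-hand side is $\lesssim\|\ub\|_{\mathbf{L}^6_t\mathbf{L}^3_x([t_0-\varepsilon^{-\alpha},t_0])}^2\lesssim\varepsilon^{2\mu}$, which is $\lesssim\varepsilon^\mu\le\varepsilon^\nu$ for $\varepsilon$ small.

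For the far piece, I would use the group property to factor
\[
\int_0^{t_0-\varepsilon^{-\alpha}} U_k(t-s) f_k(\ub)\,ds = U_k\bigl(t-(t_0-\varepsilon^{-\alpha})\bigr)\, w,
\]
with $w := \int_0^{t_0-\varepsilon^{-\alpha}} U_k((t_0-\varepsilon^{-\alpha})-s) f_k(\ub)\,ds$. A direct Duhamel computation identifies $w$ with $-i\alpha_k[\ub(t_0-\varepsilon^{-\alpha})-U_k(t_0-\varepsilon^{-\alpha})\ub_0]$, so the unitarity of $U_k$ on $H^1$ combined with the uniform bound \eqref{gloestH1} yields $\|w\|_{H^1_x}\lesssim E_0$. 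Introducing $\tau:=t-(t_0-\varepsilon^{-\alpha})$, which takes values in $[\varepsilon^{-\alpha},\infty)$ as $t$ ranges over $[t_0,\infty)$, the task reduces to establishing
\[
\|U_k(\tau) w\|_{L^6_\tau L^3_x([\varepsilon^{-\alpha},\infty)\times\R^5)} \lesssim \varepsilon^{\alpha/6} E_0.
\]
My plan for this bound is to combine the dispersive estimate $\|U_k(\tau)w\|_{L^3_x}\lesssim \tau^{-5/6}\|w\|_{L^{3/2}_x}$ of Lemma~\ref{dispest} with the homogeneous Strichartz inequality associated with the admissible pair $(6,30/13)$ and the Sobolev embedding $W^{1/2,30/13}(\R^5)\hookrightarrow L^3(\R^5)$; the precise $\varepsilon^{\alpha/6}$ factor is produced by the $L^{3/2}_\tau$-integrability of $\tau^{-5/6}$ on $[\varepsilon^{-\alpha},\infty)$, while the remaining interpolation is anchored on the $H^{1/2}$-boundedness of $w$. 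Combining this decay with the near-piece estimate gives $\varepsilon^{\min\{2\mu,\alpha/6\}}\lesssim\varepsilon^\nu$.

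The main obstacle is the far piece: since $(6,3)$ is only $5/2$-acceptable (not admissible) in dimension five, there is no direct Strichartz bound giving decay as $\tau\to\infty$, and a naive application of Minkowski combined with the dispersive estimate forces an $L^1_s$ norm of $\|f_k(\ub(s))\|_{L^{3/2}}$ which grows like $t_0$. Controlling the tail through the $(6,30/13)$-Strichartz estimate, Sobolev embedding to $L^3_x$, and the additional dispersive decay $\tau^{-5/6}$ restricted to $\tau\ge\varepsilon^{-\alpha}$ is precisely what is needed to avoid this, and it is here that the exponent $\alpha/6$ in $\nu$ appears.
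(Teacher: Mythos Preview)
Your near-piece argument is fine, and in fact cleaner than the paper's: you go straight through Proposition~\ref{stricha}(iii) and the bound $\|f_k(\ub)\|_{L^{3/2}_x}\lesssim\|\ub\|_{\mathbf L^3_x}^2$, whereas the paper routes through the admissible pairs $(6,\tfrac{30}{13})$, $(3,\tfrac{30}{11})$ and the intermediate norm $\Lb^2_t\Wb^{1/2,10/3}_x$ to land on $\varepsilon^\mu$.

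The far-piece plan, however, has a genuine gap. After writing $F_1(t)=U_k(\tau)w$ with $\tau=t-(t_0-\varepsilon^{-\alpha})$, you know only that $w\in H^1(\R^5)$, and in dimension five $H^1$ embeds into $L^q$ only for $2\le q\le\tfrac{10}{3}$; in particular $\|w\|_{L^{3/2}_x}$ is \emph{not} controlled. So the dispersive bound $\|U_k(\tau)w\|_{L^3_x}\lesssim\tau^{-5/6}\|w\|_{L^{3/2}_x}$ cannot be fed by the available information on $w$, and no interpolation with the $(6,\tfrac{30}{13})$-Strichartz/Sobolev bound (which has no $\tau$-decay) recovers the missing $\varepsilon^{\alpha/6}$. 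The paper avoids this by \emph{not} factoring out $w$: it applies the dispersive estimate to each $U_k(t-s)f_k(\ub(s))$ directly, using that $f_k(\ub)\in L^{6/5}_x$ via $H^1(\R^5)\hookrightarrow L^{12/5}(\R^5)$ and $|f_k(\ub)|\lesssim|\ub|^2$. This yields $\|F_1(t)\|_{L^6_x}\lesssim\int_0^{t_0-\varepsilon^{-\alpha}}(t-s)^{-5/3}\,ds$ and hence $\|F_1\|_{L^3_tL^6_x([t_0,\infty))}\lesssim\varepsilon^{\alpha/3}$. Interpolating this against the $L^\infty_tL^2_x$ bound (which your $w$-identity does supply) gives $\|F_1\|_{L^6_tL^3_x}\lesssim\varepsilon^{\alpha/6}$. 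The point is that the quadratic structure of $f_k$ puts the source in a Lebesgue space below $L^2$, which is exactly what the dispersive estimate needs; once you collapse the Duhamel integral into a single $H^1$ profile $w$, that information is lost.
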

\begin{proof}
Given $a\in\R$ we take $t_0$ such that \eqref{t_0T_0} holds. Write
\[
\begin{split}
\int_0^{t_0}U_k(t-s)f_k(\ub)ds& =\int_0^{t_0-\varepsilon^{-\alpha}}U_k(t-s)f_k(\ub)ds +\int_{t_0-\varepsilon^{-\alpha}}^{t_0}U_k(t-s)f_k(\ub)ds \\
&=:F_1+F_2.
\end{split}
\]
Let us first estimate $F_2$. We claim that 
\begin{equation*}
	\|\ub\|_{\Lb^2_t\Wb_x^{\frac{1}{2},\frac{10}{3}}([t_0-\varepsilon^{-\alpha},t_0]\times\R^5)}\lesssim E_0.
\end{equation*}
Indeed, since $(2,\frac{10}{3})$ and $(3,\frac{30}{11})$ are admissible pairs, \eqref{lei12} and H\"older's inequality yield
\[
\begin{split}
\|u_k\|_{L^2_tW_x^{\frac{1}{2},\frac{10}{3}}([t_0-\varepsilon^{-\alpha},t_0]\times\R^5)}& \lesssim \|u_{k0}\|_{W^{\frac{1}{2},2}_{x}}+\|f_k(\ub)\|_{L^{\frac{3}{2}}_tW_x^{\frac{1}{2},\frac{30}{19}}([t_0-\varepsilon^{-\alpha},t_0]\times\R^5)}\\
& \lesssim \|u_{k0}\|_{H^{1}_{x}}+ \|\ub\|_{\Lb_t^{6}\Lb_x^{3}([t_0-\varepsilon^{-\alpha},t_0]\times\R^5)} 	\|\ub\|_{\Lb^2_t\Wb_x^{\frac{1}{2},\frac{10}{3}}([t_0-\varepsilon^{-\alpha},t_0]\times\R^5)}\\
& \lesssim \|u_{k0}\|_{H^{1}_{x}}+ \varepsilon^{\mu}	\|\ub\|_{\Lb^2_t\Wb_x^{\frac{1}{2},\frac{10}{3}}([t_0-\varepsilon^{-\alpha},t_0]\times\R^5)},
\end{split}
\]
where we used \eqref{t_0T_0} in the last inequality. By summing over $k$ and taking into account that $\varepsilon$ is sufficiently small the claim is proved.

Now using the embedding $W^{\frac{1}{2},\frac{30}{13}}_{x}(\R^5)\hookrightarrow L^{3}_{x}(\R^5)$ and the facts that $(6,\frac{30}{13})$ and $(3,\frac{30}{11})$ are admissible pairs, we deduce
\[
\begin{split}
	\|F_2\|_{L_t^{6}L_x^{3}([t_0,\infty)\times\R^5)}&\lesssim 	\|F_2\|_{L_t^{6}W_x^{\frac{1}{2},\frac{30}{13}}([t_0,\infty)\times\R^5)}\\
	&\lesssim \|f_k(\ub)\|_{L_t^{\frac{3}{2}}W_x^{\frac{1}{2},\frac{30}{19}}([t_0,\infty)\times\R^5)}\\
	&\lesssim \|\ub\|_{\Lb_t^{6}\Lb_x^{3}([t_0-\varepsilon^{-\alpha},t_0]\times\R^5)} 	\|\ub\|_{\Lb^2_t\Wb_x^{\frac{1}{2},\frac{10}{3}}([t_0-\varepsilon^{-\alpha},t_0]\times\R^5)}\\
	&\lesssim \varepsilon^\mu,
\end{split}
\]
where in the last inequality we used \eqref{t_0T_0} and the above claim.

Next we estimate $F_1$. First note that from Lemma \ref{dispest} and Lemma \ref{estdiffk}-(i) we obtain
\[
\begin{split}
	\|U_k(t-s)f_k(\ub)\|_{L^6_x}&\lesssim |t-s|^{-\frac{5}{3}}\|f_k(\ub)\|_{L_x^{\frac{6}{5}}}\\
	&\lesssim |t-s|^{-\frac{5}{3}}\|\ub\|^2_{\Lb_x^{\frac{12}{5}}}\\
	&\lesssim E_0^2|t-s|^{-\frac{5}{3}},
\end{split}
\]
where we used the embedding $H^1(\R^5)\hookrightarrow L^{\frac{12}{5}}(\R^5)$ and \eqref{gloestH1}. This last inequality gives
\begin{equation}\label{F111}
	\|F_1\|_{L_t^{3}L_x^{6}([t_0,\infty)\times\R^5)}\lesssim E_0^2\varepsilon^{\frac{\alpha}{3}}.
\end{equation}
Also, a simple computation yields
$$
i\int_0^{t_0-\varepsilon^{-\alpha}}U_k(t-s)f_k(\ub)ds=U_k(t-t_0+\varepsilon^{-\alpha})u_k(t_0-\varepsilon^{-\alpha})-U_k(t)u_{k0},
$$
from which we deduce 
\begin{equation}\label{F112}
\|F_1\|_{L_t^{\infty}L_x^{2}([t_0,\infty)\times\R^5)}\lesssim \|u_k(t_0-\varepsilon^{-\alpha})\|_{L^2_x}+\|u_{k0}\|_{L^2_x}\lesssim E_0.
\end{equation}
Interpolation and \eqref{F111}-\eqref{F112} imply
\[ 
\begin{split}
\|F_1\|_{L_t^{6}L_x^{3}([t_0,\infty)\times\R^5)}&\lesssim \|F_1\|^{\frac{1}{2}}_{L_t^{3}L_x^{6}([t_0,\infty)\times\R^5)}
\|F_1\|^{\frac{1}{2}}_{L_t^{\infty}L_x^{2}([t_0,\infty)\times\R^5)}
\lesssim  \varepsilon^{\frac{\alpha}{6}}.
\end{split}
\]
 The proof of the lemma is thus completed.
\end{proof}

\begin{proof}[Proof of Theorem \ref{scatteringcriteio}]
	From Lemma \ref{scat1} it suffices to prove that \eqref{sccrit} holds. To do this, observe first that the embedding $W^{\frac{1}{2},\frac{30}{13}}_{x}(\R^5)\hookrightarrow L^{3}_{x}(\R^5)$ and the Strichartz estimate give that $\|U_k(t)u_{k0}\|_{L_t^{6}L_x^{3}(\R\times\R^5)}\lesssim E_0$. Thus     we may decompose $\R=\bigcup_{j=1}^JI_j$ such that
	\begin{equation}\label{2.9}
		\|U_k(t)u_{k0}\|_{L_t^{6}L_x^{3}(I_j\times\R^5)}<\varepsilon^\nu, \qquad k=1,\ldots,l.
	\end{equation}
Also note that in order to obtain \eqref{sccrit} it suffices that
\begin{equation}\label{sccrit1}
		\|\ub\|_{\Lb_t^{6}\Lb_x^{3}(I_j\times\R^5)}<\infty, \qquad j=1,\ldots,J.
\end{equation}
Assume first that $|I_j|\leq 2T_0$. Then, from the embedding $H^{1}_{x}(\R^5)\hookrightarrow L^{3}_{x}(\R^5)$ we have
\begin{equation}\label{finiten}
	\|u_k\|^6_{L_t^{6}L_x^{3}(I_j\times\R^5)}\lesssim \|u_k\|^6_{L_t^{6}H_x^{1}(I_j\times\R^5)}\lesssim E_0^6|I_j|\lesssim E_0^6T_0<\infty, \quad k=1,\ldots,l.
\end{equation}
Hence, for \eqref{sccrit1} it suffices to consider those $I_j$ such that $|I_j|>2T_0$. So, fix such a $I_j$ and suppose that $I_j=(a_j,b_j)$ with (without loss of generality) $a_j\in\R$. From Lemma \ref{firstlemma} we may choose $t_0\in (a_j,a_j+T_0)$ such that \eqref{esL6L3} holds.

Now we write $I_j=(a_j,t_0]\cup(t_0,b_j)$. Since $t_0-a_j<T_0$ as in \eqref{finiten} we obtain that $	\|\ub\|_{\Lb_t^{6}\Lb_x^{3}((a_j,t_0]\times\R^5)}<\infty$. Thus it remains to prove that $	\|\ub\|_{\Lb_t^{6}\Lb_x^{3}((t_0,b_j)\times\R^5)}<\infty$. For this, note that
$$
U_k(t-t_0)u_k(t_0)=U_k(t)u_{k0}+i\int_0^{t_0}U_k(t-s)\frac{1}{\alpha_{k}}f_k(\ub)ds.
$$
Therefore,  \eqref{2.9} and Lemma \ref{firstlemma} imply
\begin{equation} \label{2.10}
	\|U_k(t-t_0)u_k(t_0)\|_{L_t^{6}L_x^{3}((t_0,b_j)\times\R^5)}\lesssim  \varepsilon^\nu.
\end{equation}
On the other hand, by writing
$$
u_k(t)=U_k(t-t_0)u_k(t_0)+i\int_{t_0}^tU_k(t-s)\frac{1}{\alpha_{k}}f_k(\ub)ds,
$$ 
Proposition \ref{stricha}, \eqref{2.10} and Lemma \ref{estdiffk}-(i) give
\[
\begin{split}
\|u_k\|_{L_t^{6}L_x^{3}((t_0,b_j)\times\R^5)}&\lesssim \varepsilon^\nu+\|f_k(\ub)\|_{L_t^{3}L_x^{\frac{3}{2}}((t_0,b_j)\times\R^5)}\\
&\lesssim  \varepsilon^\nu+\|\ub\|^2_{L_t^{6}L_x^{3}((t_0,b_j)\times\R^5)}.
\end{split}
\]
By summing over $k$, choosing $\varepsilon$ small enough and using a continuity argument we obtain
$$
\|\ub\|_{\Lb_t^{6}\Lb_x^{3}((t_0,b_j)\times\R^5)}\lesssim \varepsilon^\nu,
$$
which completes the proof of the theorem.
\end{proof}

 \section{Proof of the main result}\label{sec.proofscatt}
 This section is devoted to prove Theorem \ref{thm:critscatt}. The main ingredient is a Morawetz-type estimate. To prove it we first state some useful lemmas based on the ground state solutions and certain suitable radial functions. Throughout the section we assume the assumptions  of Theorem \ref{thm:critscatt}.

 \subsection{A Morawetz estimate}
 
 In this subsection we will prove a Morawetz estimate adapted to solutions of system \eqref{system1}.  Here,  $\chi$ denotes the radial cut-off function introduced in Section \ref{subseccoerxci}.

Following the strategy in \cite{dodsonmurphy2018} we introduce the some  radial functions. For  $R\gg 1$, set

\begin{equation}\label{defphi}
    \phi(x)=\frac{1}{\omega_{5}R^{5}}\int_{\R^{5}}\chi^{2}\left(\frac{x-s}{R}\right)\chi^{2}\left(\frac{s}{R}\right)\;ds, 
\end{equation}
where $\omega_{5}$ is the volume of the unit ball in $\R^{5}$ and

\begin{equation}\label{phi1def}
    \phi_{1}(x,y)=\frac{1}{\omega_{5}R^{5}}\int_{\R^{5}}\chi^{2}\left(\frac{y-s}{R}\right)\chi^{3}\left(\frac{x-s}{R}\right)\;ds.  
\end{equation}
Also, we set
\begin{equation}
    \varphi(x)=\frac{1}{|x|}\int_{0}^{|x|}\phi(r)\;dr. 
\end{equation}

We now collect some properties of the above functions.

\begin{lemma}\label{properrad}
  We have the following.
\begin{enumerate}
    \item[(i)] \begin{equation}\label{properrad1}
    |\varphi(x)|\lesssim\min\left\{1,\frac{R}{|x|}\right\}\qquad \mathrm{and} \qquad \partial_{j}\varphi(x)=\frac{x_{j}}{|x|^{2}}[\phi(x)-\varphi(x)],
\end{equation}
where $\partial_{j}$ stands for the derivative with respect to $x_j$.
 \item[(ii)] 
 \begin{equation}\label{properrad2}
    |\nabla \phi(x)|\lesssim \frac{1}{R}\qquad \mathrm{and} \qquad |\nabla\varphi(x)|=\left|\frac{x}{|x|^{2}}[\phi(x)-\varphi(x)]\right|\lesssim \min\left\{\frac{1}{R},\frac{R}{|x|^{2}}\right\}.
\end{equation}
    \item[(iii)] 
    \begin{equation}\label{properrad3}
       \varphi-\phi \geq 0, \quad |\varphi(x)-\phi(x)|\lesssim \min\left\{\frac{|x|}{R},\frac{R}{|x|}\right\}\quad \mathrm{and} \quad |\phi(x-y)-\phi_{1}(x,y)|\lesssim \varepsilon.
    \end{equation}
    \item[(iv)]
    \begin{equation}\label{properrad4}
        \partial_{j}[\varphi(x)x_{m}]=\phi(x)\delta_{jm}+(\varphi-\phi)(x)P_{jm}(x),
    \end{equation}
    where $\delta_{jm}$ denotes the  Kronecker delta and $P_{jm }(x)=\delta_{jm}-\frac{x_{j}x_{m}}{|x|^{2}}$. In particular, if $j=m$ we obtain
    \begin{equation}\label{properrad5}
        \sum_{j=1}^{5} \partial_{j}[\varphi(x)x_{j}]=5\phi(x)+4(\varphi-\phi)(x)=4\varphi(x)+\phi(x). 
    \end{equation}
\end{enumerate}  
\end{lemma}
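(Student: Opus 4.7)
The plan is to handle the four items in the order that minimizes dependencies: first the structural facts about $\phi$, then (i) and (iv), then the easy $|\nabla\phi|\lesssim 1/R$ part of (ii), then (iii), and finally the $|\nabla\varphi|$ bound in (ii). Before anything, I would record that $\phi$ is radial and supported in $|x|\lesssim R$ with $\|\phi\|_\infty\lesssim 1$: after the rescaling $s\mapsto Rs$, formula \eqref{defphi} writes $\phi(x)$ as $\omega_5^{-1}(\chi^2\ast\chi^2)(x/R)$, which is radial because $\chi$ is, and radially nonincreasing because $\chi^2$ is so (symmetric decreasing rearrangement is preserved under autoconvolution). Writing $\varphi(x)=g(|x|)$ with $g(r)=r^{-1}\int_0^r\phi(t)\,dt$, the two bounds in (i) are then immediate ($|\varphi|\lesssim 1$ from $\|\phi\|_\infty\lesssim 1$, and $|\varphi|\lesssim R/|x|$ for $|x|\gtrsim R$ from the support of $\phi$), while the chain rule combined with the explicit computation $g'(r)=(\phi(r)-\varphi(r))/r$ produces the claimed formula for $\partial_j\varphi$.

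Item (iv) is then a direct algebraic manipulation: expanding $\partial_j[\varphi x_m]=(\partial_j\varphi)x_m+\varphi\delta_{jm}$, substituting from (i), and reorganizing $\frac{x_jx_m}{|x|^2}(\phi-\varphi)+\varphi\delta_{jm}$ into $\phi\delta_{jm}+(\varphi-\phi)P_{jm}$; the trace $\sum_j P_{jj}=5-1=4$ in dimension five then gives \eqref{properrad5}. The bound $|\nabla\phi|\lesssim 1/R$ comes from differentiating under the integral in \eqref{defphi}: one derivative falls on $\chi^2((x-s)/R)$ producing a factor $1/R$, and the remaining integrand is $\mathcal{O}(1)$ on a set in $s$ of measure $\lesssim R^5$. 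For (iii), the positivity $\varphi\geq\phi$ is a one-line consequence of the radial monotonicity of $\phi$, since the average of a nonincreasing function over $[0,r]$ dominates its value at $r$. The bound $|\varphi-\phi|\lesssim\min\{|x|/R,\,R/|x|\}$ splits into two regimes: for $|x|\lesssim R$ I would apply the mean value theorem together with $|\nabla\phi|\lesssim 1/R$, and for $|x|\gtrsim R$ the separate bounds from (i) already yield $|\varphi-\phi|\lesssim R/|x|$. With these in hand, the $|\nabla\varphi|$ estimate in (ii) follows from the identity $|\nabla\varphi|=|\phi-\varphi|/|x|$ already contained in (i).

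The main obstacle I expect is the proximity estimate $|\phi(x-y)-\phi_1(x,y)|\lesssim\varepsilon$. My strategy is to perform the change of variables $s\mapsto s-y$ in the definition of $\phi(x-y)$ and to use the radiality of $\chi$, which gives $\chi^2((s-y)/R)=\chi^2((y-s)/R)$; after this both integrands carry the same weight $\chi^2((y-s)/R)$ in $s$, and the only remaining discrepancy is that a factor $\chi^2((x-s)/R)$ is replaced by $\chi^3((x-s)/R)$. The pointwise difference $\chi^2-\chi^3=\chi^2(1-\chi)$ is supported, by \eqref{defchi}, in the annulus $\{1-\varepsilon\leq|(x-s)/R|\leq 1\}$, whose Lebesgue measure in $s$ is $\lesssim\varepsilon R^5$; normalizing by $\omega_5 R^5$ then delivers the $\mathcal{O}(\varepsilon)$ bound. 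The care needed lies in deploying the radiality of $\chi$ at exactly the correct step so that the two integrals become directly comparable, after which the problem reduces to measuring a thin spherical shell in $\R^5$; the remaining parts of the lemma are routine calculus.
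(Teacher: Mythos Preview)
Your proof is correct and follows the same direct-verification route the paper indicates; the paper itself does not actually carry out these computations but only says ``these properties can be checked directly'' and cites \cite{dodsonmurphy2018} and \cite{dinh2020scattering} for parts (i)--(iii), noting that (iv) follows from (i). Your write-up is therefore strictly more self-contained than the paper's, and every step (the convolution rewriting of $\phi$, the monotonicity argument for $\varphi\geq\phi$, the change of variables and thin-shell estimate for $|\phi-\phi_1|\lesssim\varepsilon$) checks out.
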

\begin{proof}
These properties can be checked directly. However, for \eqref{properrad1} and \eqref{properrad2} see \cite{dodsonmurphy2018} pages 1814 and 1819, respectively (see also \cite[Lemma 4.2]{dinh2020scattering}). For \eqref{properrad3} see \cite[Lemma 4.2]{dinh2020scattering}. Identity \eqref{properrad4} follows directly from \eqref{properrad1}.
\end{proof}

In what follows, to simplify notation, we will introduce the  quantities
\begin{equation*}
    \mathcal{M}(\ub):=\sum_{k=1}^{l}\frac{\alpha_k^{2}}{\gamma_{k}}|u_{k}|^{2},\qquad\mathcal{K}(\ub):=\sum_{k=1}^{l}\gamma_{k}|\nabla u_{k}|^{2}\qquad\mbox{and}\qquad\mathcal{T}(\ub):=2\,\mathrm{Im}\sum_{k=1}^{l}\alpha_{k} \nabla u_{k} \overline{u}_{k},
\end{equation*}
and the functional
\begin{equation}\label{mdef}
    \M(\ub):=\sum_{k=1}^{l}\frac{\alpha_k^{2}}{\gamma_{k}}\|u_{k}\|^{2}_{L^{2}}=\int_{\R^{5}}\mathcal{M}(\ub)\;dx.
\end{equation}
Note in particular that function $K$ in \eqref{funclKP6} writes as
   \begin{equation*}
 K(\ub)=\int_{\R^{5}}\mathcal{K}(\ub)\;dx.
   \end{equation*}

Finally, we introduce the interaction Morawetz quantity associated to a global solution $\ub$ of \eqref{system1} as
 \begin{equation*}
	M_{R}(t)=\int_{\R^{5}\times \R^{5}}\varphi(x-y)(x-y)\cdot\mathcal{T}(\ub)(x,t)\mathcal{M}(\ub)(y,t)\;dx\;dy.
\end{equation*}
Note that by using  Cauchy-Schwarz inequality, \eqref{properrad1}  and the fact that $\ub$ is uniformly bounded in $\mathbf{H}_x^{1}$ we have
\begin{equation}\label{boundMR}
	\sup_{t\in \R}|M_{R}(t)|\lesssim R. 
\end{equation}

To establish the Morawetz estimate we will need the following results.

 \begin{lemma}\label{Virialinde} Assume $ \mathbf{u}_0 \in \mathbf{H}_{x}^{1}$ and  let $\mathbf{u}$ be the corresponding solution of \eqref{system1}, then

\begin{enumerate}
    \item[(i)] 
    \begin{equation}\label{derivmassre}
\begin{split}
\frac{d}{dt}\mathcal{M}(\ub)&=-2\sum_{j=1}^{5}\sum_{k=1}^{l}\;\partial_{j}\mathrm{Im}\left[ \alpha_{k}\partial_{j} u_{k} \overline{u}_{k}\right]-2\mathrm{Im}\sum_{k=1}^{l}\frac{\alpha_{k}}{\gamma_{k}}f_{k}(\ub)\overline{u}_{k}.
\end{split}
\end{equation}
    \item[(ii)]
     \begin{equation}\label{derivmoment}
\begin{split}
\frac{d}{dt}\mathcal{T}(\ub)&=-4\sum_{m=1}^{5}\mathrm{Re}\;\partial_{m}\left[\sum_{k=1}^{l}\gamma_{k}\nabla\overline{u}_{k}\partial_{m}u_{k}\right]+\nabla\Delta\left(\sum_{k=1}^{l}\gamma_{k}|u_{k}|^{2}\right)+2\nabla\mathrm{Re}\, F\left(\mathbf{u}\right).
\end{split}
\end{equation}
\end{enumerate}

\end{lemma}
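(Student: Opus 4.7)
The plan is to derive both identities by differentiating the pointwise quantities in time and substituting the equation in \eqref{system1} for $\partial_t u_k$ and $\partial_t\overline u_k$; these are standard virial/momentum-type pointwise identities for NLS-type systems, and the coupling enters only through the nonlinear structure codified in Lemma \ref{estdiffk}-(ii).

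For (i) I would start from $\partial_t|u_k|^2=2\mathrm{Re}(\overline u_k\,\partial_t u_k)$ and rewrite \eqref{system1} as $\partial_t u_k=\tfrac{i}{\alpha_k}\bigl(\gamma_k\Delta u_k-\beta_k u_k+f_k(\ub)\bigr)$. The factor $i$ turns $2\mathrm{Re}$ into $-2\,\mathrm{Im}$. The $\beta_k$-contribution vanishes since $\beta_k|u_k|^2\in\R$; the Laplacian contribution is rewritten as a divergence via
\begin{equation*}
\mathrm{Im}\bigl(\overline u_k\Delta u_k\bigr)=\sum_{j=1}^5\partial_j\,\mathrm{Im}\bigl(\overline u_k\,\partial_j u_k\bigr),
\end{equation*}
(the cross term is $|\partial_j u_k|^2\in\R$); the nonlinearity gives $-\tfrac{2}{\alpha_k}\mathrm{Im}\bigl(f_k(\ub)\overline u_k\bigr)$. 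Multiplying by $\alpha_k^2/\gamma_k$ and summing over $k$ yields \eqref{derivmassre}.

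For (ii) I would work componentwise with $\mathcal T_m(\ub)=2\mathrm{Im}\sum_k\alpha_k\partial_m u_k\,\overline u_k$. Differentiating in $t$ and using $\alpha_k\partial_t u_k=i\bigl(\gamma_k\Delta u_k-\beta_k u_k+f_k(\ub)\bigr)$, the identity $2\mathrm{Im}(i\,\cdot\,)=2\mathrm{Re}(\cdot)$ turns everything into real parts, and the $\beta_k$-pieces from $u_k$ and $\overline u_k$ cancel pairwise. The resulting $\gamma_k$-piece is
\begin{equation*}
2\mathrm{Re}\sum_k\gamma_k\sum_{j=1}^5\partial_j\bigl[\partial_j\partial_m u_k\,\overline u_k-\partial_m u_k\,\partial_j\overline u_k\bigr],
\end{equation*}
which I would reorganize using the pointwise Leibniz-type identity
\begin{equation*}
2\mathrm{Re}\bigl(\partial_j\partial_m u_k\,\overline u_k\bigr)-2\mathrm{Re}\bigl(\partial_m u_k\,\partial_j\overline u_k\bigr)=\partial_j\partial_m|u_k|^2-4\mathrm{Re}\bigl(\partial_j\overline u_k\,\partial_m u_k\bigr),
\end{equation*}
obtained by expanding $\partial_j\partial_m(u_k\overline u_k)$. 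This turns the $\gamma_k$-piece into exactly $\partial_m\Delta\bigl(\sum_k\gamma_k|u_k|^2\bigr)-4\sum_{j=1}^5\partial_j\,\mathrm{Re}\sum_k\gamma_k\partial_j\overline u_k\,\partial_m u_k$, matching the first two terms of \eqref{derivmoment} componentwise.

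The main obstacle, and the only step where the coupled structure of the system plays a real role, is collapsing the nonlinear contribution $2\mathrm{Re}\sum_k\bigl[\partial_m f_k(\ub)\,\overline u_k-\partial_m u_k\,\overline{f_k(\ub)}\bigr]$ to $2\,\partial_m\mathrm{Re}\,F(\ub)$. I would apply Leibniz to the first term, $\partial_m f_k\,\overline u_k=\partial_m(f_k\,\overline u_k)-f_k\,\partial_m\overline u_k$, and then invoke both forms of Lemma \ref{estdiffk}-(ii), namely $\mathrm{Re}\sum_k f_k(\ub)\overline u_k=3\,\mathrm{Re}\,F(\ub)$ and $\mathrm{Re}\sum_k f_k(\ub)\partial_m\overline u_k=\partial_m\mathrm{Re}\,F(\ub)$, which together give $\mathrm{Re}\sum_k\partial_m f_k\,\overline u_k=2\,\partial_m\mathrm{Re}\,F(\ub)$. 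For the second term, the observation $\mathrm{Re}(\partial_m u_k\,\overline{f_k})=\mathrm{Re}(f_k\,\partial_m\overline u_k)$ (complex conjugation preserves $\mathrm{Re}$) reduces it to $\partial_m\mathrm{Re}\,F(\ub)$. Adding $2\cdot 2\,\partial_m\mathrm{Re}\,F(\ub)-2\,\partial_m\mathrm{Re}\,F(\ub)=2\,\partial_m\mathrm{Re}\,F(\ub)$ produces the $m$-th component of $2\nabla\mathrm{Re}\,F(\ub)$. Assembling the five components gives \eqref{derivmoment}. All manipulations are justified because, under our hypotheses, $\ub$ is uniformly bounded in $\mathbf H_x^1$ and $f_k$ is $C^1$-smooth with the growth imposed in \ref{H2}.
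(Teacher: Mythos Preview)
Your proof is correct and follows exactly the approach the paper indicates: differentiate in time, substitute the equation for $\partial_t u_k$, and use Lemma~\ref{estdiffk}-(ii) to collapse the nonlinear contribution in (ii). The paper's own proof merely sketches this in one line, so you have simply supplied the computations it omits.
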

\begin{proof}
These properties are consequence of the fact that $\mathbf{u}$ is a solution of \eqref{system1}. For (ii) it is necessary to use Lemma \ref{estdiffk}-(ii).  
\end{proof}

\begin{lemma}\label{Gaugeprop}
For $\xi \in \R^{5}$, let $\ub^\xi$ be defined as in \eqref{Gaugetrans}.
Then
\begin{enumerate}
\item[(i)]
\begin{equation*}
    \mathcal{M}(\ub^{\xi}(x))= \mathcal{M}(\ub(x)).
\end{equation*}
    \item[(ii)]
    \begin{equation*}
        \left|\nabla[u^{\xi}_{k}(x)]\right|^{2}=\frac{\alpha_{k}^{2}}{\gamma_{k}^{2}}|\xi|^{2}| u_{k}(x)|^{2}+2\frac{\alpha_{k}}{\gamma_{k}}\xi\cdot\mathrm{Im}[\nabla u_{k}\overline{u}_{k}](x)+\left|\nabla u_{k}(x)\right|^{2}
    \end{equation*}
    In particular,
    \begin{equation*}
        \mathcal{K}(\ub^{\xi}(x))=|\xi|^{2}\mathcal{M}(\ub(x))+\xi\cdot\mathcal{T}(\ub(x))+\mathcal{K}(\ub(x)).
    \end{equation*}
    \item[(iii)]
    \begin{equation*}
         [\nabla u_{k}^{\xi}\,\overline{u}_{k}^{\xi}](x)=\frac{\alpha_{k}}{\gamma_{k}}i\xi|u_{k}(x)|^{2}+[\nabla u_{k}\,\overline{u}_{k}](x)
    \end{equation*}
    In particular,
    \begin{equation}\label{GaugeT}
        \frac{1}{2}\mathcal{T}(\ub^{\xi}(x))=\xi\mathcal{M}(\ub(x))+\frac{1}{2}\mathcal{T}(\ub(x)).
    \end{equation}
\end{enumerate}
\begin{proof}
The proof follows by simple computations. So we omit the details.
\end{proof}

\end{lemma}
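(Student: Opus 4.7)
The plan is a direct pointwise computation from the defining formula \eqref{Gaugetrans}, exploiting two elementary facts about the phase factor $e^{i\frac{\alpha_k}{\gamma_k}x\cdot\xi}$: it has unit modulus, and its gradient is $i\frac{\alpha_k}{\gamma_k}\xi$ times itself. Since the author labels these identities routine, I expect each part to fall out in a line or two once these observations are applied; the only real work is bookkeeping with real and imaginary parts.

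For (i), the pointwise equality $|u_k^\xi(x)|=|u_k(x)|$ is immediate, and the claim follows directly from the definition of $\mathcal{M}$ in \eqref{mdef}. For (ii), the product rule gives
\[
\nabla u_k^\xi(x)=e^{i\frac{\alpha_k}{\gamma_k}x\cdot\xi}\Bigl(i\tfrac{\alpha_k}{\gamma_k}\xi\, u_k(x)+\nabla u_k(x)\Bigr),
\]
so that the squared modulus of the parenthesized sum expands into the three terms in the statement, with the cross term handled using $\mathrm{Re}(iz)=-\mathrm{Im}(z)$ together with the identity $\mathrm{Im}(\overline{u_k}\nabla u_k)=\mathrm{Im}(\nabla u_k\,\overline{u_k})$. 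Multiplying by $\gamma_k$, summing over $k$, and regrouping via the definitions of $\mathcal{M}$, $\mathcal{T}$ and $\mathcal{K}$ then yields the asserted formula for $\mathcal{K}(\ub^\xi)$.

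For (iii), the phase and its complex conjugate cancel in the pointwise product, leaving
\[
\nabla u_k^\xi(x)\,\overline{u_k^\xi(x)}=i\tfrac{\alpha_k}{\gamma_k}\xi\,|u_k(x)|^2+\nabla u_k(x)\,\overline{u_k(x)}.
\]
Multiplying by $2\alpha_k$, taking imaginary parts, and summing over $k$, the first (purely imaginary) term contributes $2\frac{\alpha_k^2}{\gamma_k}\xi|u_k|^2$ while the second reproduces $\mathcal{T}(\ub)$, yielding the claimed $\mathcal{T}(\ub^\xi)=2\xi\,\mathcal{M}(\ub)+\mathcal{T}(\ub)$. No genuine obstacle arises; the only point requiring care is tracking the sign in the cross term of (ii), and noting that the coefficients $\frac{\alpha_k}{\gamma_k}$ appearing in these identities are precisely those compared to $\sigma_k$ in the small-mass-resonance hypothesis \eqref{smallreson}, which is why the gauge $\ub^\xi$ will be the natural object to insert into the forthcoming interaction Morawetz estimate.
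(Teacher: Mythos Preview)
Your proposal is correct and follows exactly the route the paper has in mind: the paper's own proof is the single sentence ``The proof follows by simple computations. So we omit the details,'' and what you have written is precisely those computations carried out. There is nothing to compare; you have simply filled in the omitted details accurately.
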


\begin{lemma}\label{angulderiv}
 Let $f$ and $g$  be  smooth functions defined on $\R^{5}$. Let $\nnabla\;_{z}$   denote the angular derivative\\ centered at $z$, that is,
\begin{equation}
   \nnabla\;_{z}f(x) =\nabla f(x)-\frac{x-z}{|x-z|}\left(\frac{x-z}{|x-z|}\cdot\nabla f(x)\right).  
 \end{equation}
 Then, with the same notation of Lemma \ref{properrad} we have
 \begin{itemize}
     \item[(i)] 
     \begin{equation*}
\sum_{j=1}^{5}\sum_{m=1}^{5}P_{jm}(x-y)\mathrm{Im}[\overline{f}\,\partial_{m}f](x)\cdot \mathrm{Im}[\overline{g}\,\partial_{j}g](y)= \mathrm{Im}[\overline{f}\,\nnabla\;_{y}f](x)\cdot \mathrm{Im}[\overline{g}\,\nnabla\;_{x}g](y).
 \end{equation*}
     \item[(ii)]
     \begin{equation*}
          \sum_{j=1}^{5}\sum_{m=1}^{5}\mathrm{Re}[\partial_{j}\overline{f}\partial_{m}f](x)P_{jm}(x-y)=|\nnabla\;_{y}f](x)|^{2}.
     \end{equation*}
 \end{itemize}
 \end{lemma}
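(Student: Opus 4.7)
The plan is to reduce both identities to straightforward linear algebra by recognizing that $P_{jm}(x-y)$ is nothing more than the matrix of the orthogonal projection onto the hyperplane perpendicular to $x-y$. Setting $e=(x-y)/|x-y|$, one has $P_{jm}(x-y)=\delta_{jm}-e_je_m$, so that for any vector $v\in\R^5$,
\[
\sum_{j}P_{jm}(x-y)\,v_j=v_m-e_m(e\cdot v),
\]
which is exactly the orthogonal projection of $v$ onto $e^{\perp}$. Noting that the angular derivative centered at $z$ is $\nnabla_{z}f(x)=\nabla f(x)-\frac{x-z}{|x-z|}\bigl(\frac{x-z}{|x-z|}\cdot\nabla f(x)\bigr)$, and observing that $\frac{y-x}{|y-x|}=-e$ (so squaring it kills the sign), we see that $\nnabla_{y}f(x)$ and $\nnabla_{x}g(y)$ are respectively the projections of $\nabla f(x)$ and $\nabla g(y)$ onto $e^{\perp}$.

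For part (i), I would set $A_m(x):=\mathrm{Im}[\overline{f}\,\partial_{m}f](x)$ and $B_j(y):=\mathrm{Im}[\overline{g}\,\partial_{j}g](y)$, so that the left-hand side becomes
\[
\sum_{j,m}(\delta_{jm}-e_je_m)A_m(x)B_j(y)=A(x)\cdot B(y)-(e\cdot A(x))(e\cdot B(y)).
\]
On the right-hand side, using the projection interpretation above, $\mathrm{Im}[\overline{f}\,\nnabla_{y}f](x)=A(x)-e(e\cdot A(x))$ and $\mathrm{Im}[\overline{g}\,\nnabla_{x}g](y)=B(y)-e(e\cdot B(y))$; taking their dot product and using $|e|^2=1$ gives the same quantity after cancellation of the cross terms.

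For part (ii), the same direct computation applies: using $\sum_{j,m}\mathrm{Re}[\partial_{j}\overline{f}\,\partial_{m}f]\,e_je_m=|e\cdot\nabla f|^2$ (which is real and nonnegative), the left-hand side reduces to $|\nabla f|^2-|e\cdot\nabla f|^2$. For the right-hand side I would expand $|\nnabla_{y}f|^2=\sum_{j}|\partial_{j}f-e_j(e\cdot\nabla f)|^2$, note that the cross term contributes $-2|e\cdot\nabla f|^2$ and the pure square contributes $+|e\cdot\nabla f|^2$ (using $\sum e_j^2=1$), yielding the same expression $|\nabla f|^2-|e\cdot\nabla f|^2$.

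No real obstacle is expected here — the whole argument is linear algebra plus the standard Pythagorean decomposition along $e$ and $e^{\perp}$. The only two points that need a brief comment are the sign issue when switching between $\nnabla_{y}$ evaluated at $x$ and $\nnabla_{x}$ evaluated at $y$ (the direction vectors differ by a sign but the projection matrix is unchanged), and the fact that the complex bilinear form $\overline{(e\cdot\nabla f)}(e\cdot\nabla f)$ is automatically real so its real part equals itself. With these noted, both identities follow in a few lines.
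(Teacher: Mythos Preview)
Your proposal is correct and follows essentially the same approach as the paper: both proofs expand the definition of $P_{jm}(x-y)=\delta_{jm}-e_je_m$ and reduce each side to $A\cdot B-(e\cdot A)(e\cdot B)$ in part (i) and to $|\nabla f|^2-|e\cdot\nabla f|^2$ in part (ii). Your framing via the orthogonal projection onto $e^{\perp}$ is slightly more conceptual than the paper's explicit summation, but the computations and the two brief caveats you flag (the sign of $e$ is irrelevant since $P$ is even in $e$, and $\overline{(e\cdot\nabla f)}(e\cdot\nabla f)$ is real) are exactly the points the paper handles as well.
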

\begin{proof}
Both properties follow from the definition of the angular derivative. In fact,
\begin{equation*}
    \begin{split}
         \mathrm{Im}[\overline{f}\,\nnabla\;_{y}f](x)\cdot \mathrm{Im}[\overline{g}\,\nnabla\;_{x}g](y)&=\sum_{j=1}^{5}\left\{\mathrm{Im}[\overline{f}\,\partial_{j}f](x)-\frac{(x-y)_{j}}{|x-y|}\sum_{m=1}^{5}\frac{(x-y)_{m}}{|x-y|}\mathrm{Im}[\overline{f}\,\partial_{m}f](x)\right\}\\
         &\qquad\times\left\{\mathrm{Im}[\overline{g}\,\partial_{j}g](y)-\frac{(x-y)_{j}}{|x-y|}\sum_{m=1}^{5}\frac{(x-y)_{m}}{|x-y|}\mathrm{Im}[\overline{g}\,\partial_{m}g](y)\right\}\\
         &=\sum_{j=1}^{5}\mathrm{Im}[\overline{f}\,\partial_{j}f](x)\mathrm{Im}[\overline{g}\,\partial_{j}g](y)\\
         &\quad-\sum_{j=1}^{5}\sum_{m=1}^{5}\frac{(x-y)_{j}}{|x-y|}\frac{(x-y)_{m}}{|x-y|}\mathrm{Im}[\overline{f}\,\partial_{j}f](x)\mathrm{Im}[\overline{g}\,\partial_{m}g](y).
      \end{split}
  \end{equation*}  
 Now, rearranging the terms we infer 
  \begin{equation*}
  	\begin{split}       
    \mathrm{Im}[\overline{f}\,\nnabla\;_{y}f](x)\cdot \mathrm{Im}[\overline{g}\,\nnabla\;_{x}g](y)      &=\sum_{j=1}^{5}\sum_{m=1}^{5}\mathrm{Im}[\overline{f}\,\partial_{j}f](x)\mathrm{Im}[\overline{g}\,\partial_{m}g](y)\delta_{jm}\\
         &\quad-\sum_{j=1}^{5}\sum_{m=1}^{5}\frac{(x-y)_{j}}{|x-y|}\frac{(x-y)_{m}}{|x-y|}\mathrm{Im}[\overline{f}\,\partial_{j}f](x)\mathrm{Im}[\overline{g}\,\partial_{m}g](y)\\
         &=\sum_{j=1}^{5}\sum_{m=1}^{5}\mathrm{Im}[\overline{f}\,\partial_{j}f](x)\mathrm{Im}[\overline{g}\,\partial_{m}g](y)P_{jm}(x-y).
    \end{split}
\end{equation*}
This proves (i). For (ii) a direct calculation gives
\begin{equation*}
    \begin{split}
        \sum_{j=1}^{5}\sum_{m=1}^{5}\mathrm{Re}[\partial_{j}\overline{f}\partial_{m}f](x)P_{jm}(x-y)&=\mathrm{Re}\left[|\nabla f(x)|^{2}-\overline{\left(\sum_{j=1}^{5}\frac{(x-y)_{j}}{|x-y|}\partial_{j}f(x)\right)}\left(\sum_{m=1}^{5}\frac{(x-y)_{m}}{|x-y|}\partial_{m}f(x)\right)\right]\\
        &=|\nabla f(x)|^{2}-\left|\frac{(x-y)}{|x-y|}\cdot\nabla f(x)\right|^2\\
        &=|\nnabla\;_{y}f(x)|^{2},
    \end{split}
\end{equation*}
which is the desired statement.
\end{proof}

The next result is our Morawetz estimate. We follow the ideas presented in \cite{dodsonmurphy2018} and \cite{wang2019Sacttering}.

\begin{proposition}[Interaction Morawetz estimate]\label{viri-Morz} Let $\ub_0\in \mathbf{H}^{1}_{x}$  satisfy \eqref{desEQgs}-\eqref{desQKgs} and let $\mathbf{u}$ be the corresponding  solution of \eqref{system1} given by Theorem A.
Let $\varepsilon>0$ and $a\in \R$. Then there are $\delta>0$,  $T_{0}=T_0(\varepsilon)\geq1$, $J=J(\varepsilon)$ and $\varepsilon_{1}>0$ sufficiently small such that if
\begin{equation}\label{1.17.1}
    \left|\frac{\alpha_{k}}{\gamma_{k}}-\sigma_{k}\right|<\varepsilon_{1}, \qquad k=1,\ldots,l,
\end{equation} 
then for $R_{0}=R_{0}(\delta,Q(\ub_0),\psib)$ sufficiently large we have
\begin{equation}\label{virmorineq}
    \frac{\delta}{JT_{0}}\int_{a}^{a+T_{0}}\int_{R_{0}}^{R_{0}e^{J}}\frac{1}{R^{5}}\int_{\R^{5}}\left[\M\left(\chi\left(\frac{\cdot-s}{R}\right)\ub\right)K\left(\chi\left(\frac{\cdot-s}{R}\right)\ub^{\xi_{0}}\right)\right]ds\frac{dR}{R}dt\lesssim\varepsilon,
\end{equation}
where $\chi$ is as in \eqref{defchi}  and
\begin{equation}\label{xi_0def}
\xi_{0}=\left\{\begin{array}{cl}
	{\displaystyle-\frac{1}{2}\frac{\displaystyle\int_{\R^{5}}\chi^{2}\left(\frac{x-s}{R}\right)\mathcal{T}(\ub)(x)\;dx}{\displaystyle\int_{\R^{5}}\chi^{2}\left(\frac{x-s}{R}\right)\mathcal{M}(\ub)(x)\;dx}, }& \quad \mbox{if}\quad\displaystyle \int_{\R^{5}}\chi^{2}\left(\frac{x-s}{R}\right)\mathcal{M}(\ub)(x)\;dx\neq 0,\\\\
	0,& \quad\mbox{otherwise}.
\end{array}\right.
\end{equation}
\end{proposition}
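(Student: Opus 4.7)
The plan is to differentiate the interaction Morawetz quantity $M_{R}(t)$ in time, identify a non-negative ``main term'' that yields the integrand on the left-hand side of \eqref{virmorineq}, and absorb all remaining terms either into the uniform bound \eqref{boundMR} (after time-integration), into the coercivity factor $\delta$ coming from Lemma \ref{lemcoerbal}, or into an $O(\varepsilon_{1})$ error coming from the failure of mass-resonance. Concretely, using Lemma \ref{Virialinde} and integration by parts with respect to both $x$ and $y$, I would write
\[
\frac{d}{dt}M_{R}(t)=\mathcal{I}_{\mathrm{kin}}+\mathcal{I}_{\mathrm{mom}}+\mathcal{I}_{F}+\mathcal{I}_{\mathrm{nmr}},
\]
where $\mathcal{I}_{\mathrm{kin}}$ collects the terms produced by the kinetic piece of $\partial_{t}\mathcal{T}$ (using \eqref{derivmoment}), $\mathcal{I}_{\mathrm{mom}}$ collects the terms produced by the divergence piece of $\partial_{t}\mathcal{M}$ (using \eqref{derivmassre}), $\mathcal{I}_{F}$ is the contribution of $2\nabla\mathrm{Re}\,F(\ub)$, and $\mathcal{I}_{\mathrm{nmr}}=-2\mathrm{Im}\sum_{k}(\tfrac{\alpha_{k}}{\gamma_{k}}-\sigma_{k})f_{k}(\ub)\overline{u}_{k}$ is the ``non-mass-resonance remainder'' that does not vanish because we only assume \ref{H4}, not \eqref{RC}; by \ref{H4} this term would vanish if the $\alpha_{k}/\gamma_{k}$ equaled $\sigma_{k}$, and under \eqref{1.17.1} it is bounded pointwise by $\varepsilon_{1}\sum_{j}|u_{j}|^{3}$.

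Next I would manipulate $\mathcal{I}_{\mathrm{kin}}$ and $\mathcal{I}_{\mathrm{mom}}$ using Lemma \ref{properrad}: the derivative structure \eqref{properrad4}--\eqref{properrad5} produces on the one hand a $\phi(x-y)$-weighted integral of $\mathcal{K}(\ub)\mathcal{M}(\ub)$, and on the other hand an ``off-diagonal'' piece coming from $\varphi-\phi$ that, via Lemma \ref{angulderiv}, reassembles into a manifestly non-negative combination of angular derivatives and $|\mathcal{T}|^{2}$. The standard Dodson--Murphy trick is then to introduce the optimal gauge shift $\xi_{0}=\xi_{0}(s,R,t)$ of \eqref{xi_0def}, which makes the $s$-localised momentum $\int\chi^{2}((\cdot-s)/R)\mathcal{T}(\ub^{\xi_{0}})\,dx$ vanish by \eqref{GaugeT}; using Lemma \ref{Gaugeprop} this precisely converts the combination of $\mathcal{K}(\ub)\mathcal{M}(\ub)$ plus the $|\mathcal{T}|^{2}$ term into the Galilean-invariant density $\mathcal{M}(\ub)\mathcal{K}(\ub^{\xi_{0}})$ after averaging in $s$ via the first property in \eqref{properrad3} (so that $\phi(x-y)$ becomes $\phi_{1}(x,y)$ modulo a uniform $O(\varepsilon)$ error).

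For the potential term $\mathcal{I}_{F}$, integration by parts turns $\nabla\mathrm{Re}\,F(\ub)$ against $\varphi(x-y)(x-y)\mathcal{M}(\ub)(y)$ into an integral of $\mathrm{Re}\,F(\ub)$ multiplied by the local dimension factor $4\varphi+\phi$ of \eqref{properrad5}. Substituting the identity $\phi(x-y)=\omega_{5}^{-1}R^{-5}\int\chi^{2}((x-s)/R)\chi^{2}((y-s)/R)\,ds$ and comparing with Lemma \ref{lemcoerbal} applied to $\chi((\cdot-s)/R)\ub$ with the gauge $\xi_{0}$, the coefficient $\tfrac{2}{5}(1-\delta)$ in front of the localized kinetic energy beats the factor $\tfrac{2}{5}$ that the potential contribution produces from the $5\phi$ part of \eqref{properrad5}; consequently $\mathcal{I}_{F}$ plus the kinetic main term is bounded below by a positive multiple $\delta$ of $\int\mathcal{M}(\ub(y))\mathcal{K}(\ub^{\xi_{0}}(x))\chi^{2}((x-s)/R)\chi^{2}((y-s)/R)$ after averaging in $s$. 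The leftover boundary-type contributions from $\varphi-\phi$, the error in the third estimate of \eqref{properrad3}, and $\mathcal{I}_{\mathrm{nmr}}$ are all controlled either by $\varepsilon$ (via the $\ep$-parameter inside $\chi$) or by $\varepsilon_{1}$ times $\sup_{t}\|\ub\|_{\Hb_{x}^{1}}^{4}$, using the uniform bound from Lemma \ref{thm:lemcoerI}.

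Finally I would integrate in $t\in(a,a+T_{0})$, use $|M_{R}(a+T_{0})-M_{R}(a)|\lesssim R$ from \eqref{boundMR}, divide by $T_{0}$, and then integrate $dR/R$ from $R_{0}$ to $R_{0}e^{J}$, so that the right-hand side becomes $\lesssim \frac{1}{T_{0}}+\varepsilon+\varepsilon_{1}$; choosing first $R_{0}$ large so that Lemma \ref{lemcoerbal} applies, then $J$ large and $T_{0}=T_{0}(\varepsilon)$ large enough and finally $\varepsilon_{1}$ small enough, all error contributions are absorbed into $\varepsilon$ and \eqref{virmorineq} follows. The main obstacle is the bookkeeping surrounding $\mathcal{I}_{\mathrm{nmr}}$: because we have lost the pointwise cancellation of the imaginary nonlinear term, one must carefully combine H\"older, the uniform $\Hb_{x}^{1}$ bound and the small parameter $\varepsilon_{1}$ so that this error is dominated by the positive main term rather than by $R$, and simultaneously ensure that the gauge shift $\xi_{0}$ introduced above, although $(s,R,t)$-dependent, does not reintroduce time-derivative errors when integrating by parts --- this works because $\xi_{0}$ enters only through the spatial density $\mathcal{K}(\ub^{\xi_{0}})$ produced after time-integration, not inside $M_{R}(t)$ itself.
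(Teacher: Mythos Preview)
Your proposal is essentially the paper's proof: the same Morawetz functional is differentiated, the same four-term decomposition arises (your $\mathcal{I}_{\mathrm{kin}},\mathcal{I}_{\mathrm{mom}},\mathcal{I}_{F},\mathcal{I}_{\mathrm{nmr}}$ correspond to the paper's $\mathcal{I}_{3},\mathcal{I}_{4},\mathcal{I}_{1},\mathcal{I}_{43}$), the angular-derivative positivity (Claim~1) and the Galilean-gauge trick with $\xi_{0}$ (Claim~2) are used exactly as you describe, and Lemma~\ref{lemcoerbal} supplies the factor $\delta$. One point to tighten: the non-mass-resonance error is \emph{not} $R$-independent --- the paper gets $\mathcal{C}\lesssim R\varepsilon_{1}$ because the weight $|\varphi(x-y)(x-y)|\lesssim R$, so after the $t$ and $dR/R$ averages the full error reads $\tfrac{R_{0}e^{J}}{T_{0}J}+\varepsilon+\tfrac{1}{J}+\tfrac{1}{JR_{0}}+\tfrac{R_{0}e^{J}}{J}\varepsilon_{1}+\tfrac{1}{JR_{0}^{2}}$ rather than $\tfrac{1}{T_{0}}+\varepsilon+\varepsilon_{1}$; your sequential choice of $R_{0},J,T_{0},\varepsilon_{1}$ still closes, but the paper makes this explicit via $J=\varepsilon^{-2}$, $R_{0}=\varepsilon^{-1}$, $T_{0}=e^{\varepsilon^{-2}}$, $\varepsilon_{1}=e^{-\varepsilon^{-2}}$.
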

\begin{proof}
To shorten notation, we will drop the dependence of $t$ in what follows. We start using identity  \eqref{derivmoment} to get
\begin{equation}
\begin{split}\label{derMR}
   \frac{dM_{R}(t)}{dt}&=2\int_{\R^{5}\times \R^{5}}\varphi(x-y)(x-y)\cdot\nabla\mathrm{Re}\, F\left(\mathbf{u}(x)\right)\mathcal{M}(\ub(y))\;dx\;dy\\
        &\quad+\int_{\R^{5}\times \R^{5}}\varphi(x-y)(x-y)\cdot\nabla\Delta\left(\sum_{k=1}^{l}\gamma_{k}|u_{k}(x)|^{2}\right)\mathcal{M}(\ub(y))\;dx\;dy\\
        &\quad-4\sum_{j=1}^{5}\int_{\R^{5}\times \R^{5}}\varphi(x-y)(x-y)_{j}\sum_{m=1}^{5}\mathrm{Re}\;\partial_{m}\left[\sum_{k=1}^{l}\gamma_{k}\partial_{j}\overline{u}_{k}\partial_{m}u_{k}\right](x)\\
        &\quad\qquad\qquad\qquad\qquad\qquad\qquad\qquad\qquad\qquad\qquad\times\mathcal{M}(\ub(y))\;dx\;dy\\
        &\quad+\int_{\R^{5}\times \R^{5}}\varphi(x-y)(x-y)\cdot\mathcal{T}(\ub)(x)\frac{d}{dt}\mathcal{M}(\ub(y))\;dx\;dy\\
        &=:\mathcal{I}_{1}+\mathcal{I}_{2}+\mathcal{I}_{3}+\mathcal{I}_{4}.
        \end{split}
\end{equation}

We first work with $\mathcal{I}_{1}$. Integrating by parts and using \eqref{properrad5}  we obtain
\begin{equation*}
\begin{split}
    \mathcal{I}_{1}&=-2\sum_{j=1}^{5}\int_{\R^{5}\times \R^{5}}\partial_{j}[\varphi(x-y)(x-y)_{j}]\mathrm{Re}\, F\left(\mathbf{u}(x)\right)\mathcal{M}(\ub(y))\;dx\;dy\\
  &=-2\sum_{j=1}^{5}\int_{\R^{5}\times \R^{5}}[5\phi(x-y)+4(\varphi-\phi)(x-y)]\mathrm{Re}\, F\left(\mathbf{u}(x)\right)\mathcal{M}(\ub(y))\;dx\;dy.
\end{split}
\end{equation*}

Then, using  \eqref{phi1def}, hypotheses \ref{H5} and the definition of $\M$ (see \eqref{mdef})  we can rewrite the last expression as
\begin{equation}
    \begin{split}
        \mathcal{I}_{1}
  &=\frac{-10}{\omega_{5}R^{5}}\int_{\R^{5}}\left[\M\left(\chi\left(\frac{\cdot-s}{R}\right)\ub\right)\int_{\R^{5}}\mathrm{Re}\, F\left(\chi\left(\frac{x-s}{R}\right)\mathbf{u}(x)\right)\;dx\right]\;ds\label{term1.1}\\
  &\quad-8\int_{\R^{5}\times \R^{5}}\mathrm{Re}\, F\left(\mathbf{u}(x)\right)\mathcal{M}(\ub(y))[(\varphi-\phi)(x-y)]\;dx\;dy\\
  &\quad -10\int_{\R^{5}\times \R^{5}}\mathrm{Re}\, F\left(\mathbf{u}(x)\right)\mathcal{M}(\ub(y))[\phi(x-y)-\phi_{1}(x,y)]\;dx\;dy.
    \end{split}
\end{equation}

Next, we consider $\mathcal{I}_{2}$. Integrating by parts twice (with respect to $x$) and using \eqref{properrad5} we obtain
\begin{equation}
  \mathcal{I}_{2}=  \int_{\R^{5}\times \R^{5}}\nabla[4\varphi(x-y)+\phi(x-y)]\cdot\nabla\left(\sum_{k=1}^{l}\gamma_{k}|u_{k}(x)|^{2}\right)\mathcal{M}(\ub(y))\;dx\;dy.\label{term2.1}
\end{equation}
Now, we work with $\mathcal{I}_{3}$. Integrating by parts and using the definition of $P_{jm}$ (see Lemma \ref{properrad}-(iv)) we have
\begin{align*}
              \begin{split}
              \mathcal{I}_{3} &=4\sum_{j=1}^{5}\sum_{m=1}^{5}\int_{\R^{5}\times \R^{5}}\partial_{m}[\varphi(x-y)(x-y)_{j}]\mathrm{Re}\;\left[\sum_{k=1}^{l}\gamma_{k}\partial_{j}\overline{u}_{k}\partial_{m}u_{k}\right](x)\\
        &\quad\qquad\qquad\qquad\qquad\qquad\qquad\qquad\qquad\qquad\qquad\times\mathcal{M}(\ub(y))\;dx\;dy\\
        &=4\sum_{j=1}^{5}\sum_{m=1}^{5}\int_{\R^{5}\times \R^{5}}\delta_{mj}\phi(x-y)\mathrm{Re}\;\left[\sum_{k=1}^{l}\gamma_{k}\partial_{j}\overline{u}_{k}\partial_{m}u_{k}\right](x)\mathcal{M}(\ub(y))\;dx\;dy\\
        &\quad+4\sum_{j=1}^{5}\sum_{m=1}^{5}\int_{\R^{5}\times \R^{5}}P_{mj}(x-y)(\varphi-\phi)(x-y)\mathrm{Re}\;\left[\sum_{k=1}^{l}\gamma_{k}\partial_{j}\overline{u}_{k}\partial_{m}u_{k}\right](x)\\
        &\quad\qquad\qquad\qquad\qquad\qquad\qquad\qquad\qquad\qquad\qquad\times\mathcal{M}(\ub(y))\;dx\;dy.
  \end{split}
\end{align*}
Using now the definition of $\phi$ and the fact that $\chi$ is radial we may write
\begin{equation}
	\begin{split}\label{I3}
     \mathcal{I}_{3}  &=4\int_{\R^{5}\times \R^{5}}\phi(x-y)\mathrm{Re}\;\left[\sum_{k=1}^{l}\gamma_{k}|\nabla u_{k}|^{2}\right](x)\mathcal{M}(\ub(y))\;dx\;dy\\
         &\quad+4\sum_{j=1}^{5}\sum_{m=1}^{5}\int_{\R^{5}\times \R^{5}}P_{mj}(x-y)(\varphi-\phi)(x-y)\mathrm{Re}\;\left[\sum_{k=1}^{l}\gamma_{k}\partial_{j}\overline{u}_{k}\partial_{m}u_{k}\right](x)\\
        &\quad\qquad\qquad\qquad\qquad\qquad\qquad\qquad\qquad\qquad\qquad\times\mathcal{M}(\ub(y))\;dx\;dy\\
           &=\frac{4}{\omega_{5}R^{5}}\int_{\R^{5}\times \R^{5}}\int_{\R^{5}}\chi^{2}\left(\frac{x-s}{R}\right)\chi^{2}\left(\frac{y-s}{R}\right)\mathcal{K}(\ub(x))\mathcal{M}(\ub(y))\;dx\;dy\;ds\\
            &\quad+4\sum_{j=1}^{5}\sum_{m=1}^{5}\int_{\R^{5}\times \R^{5}}P_{mj}(x-y)(\varphi-\phi)(x-y)\mathrm{Re}\;\left[\sum_{k=1}^{l}\gamma_{k}\partial_{j}\overline{u}_{k}\partial_{m}u_{k}\right](x)\\
        &\quad\qquad\qquad\qquad\qquad\qquad\qquad\qquad\qquad\qquad\qquad\times\mathcal{M}(\ub(y))\;dx\;dy \\
        &=:\mathcal{I}_{31}+\mathcal{I}_{32}.
           \end{split}
\end{equation}
Finally, we consider the term $\mathcal{I}_{4}$. We use \eqref{derivmassre} to write, 
\begin{equation}
     \mathcal{I}_{4}=\mathcal{I}-2\int_{\R^{5}\times \R^{5}}\varphi(x-y)(x-y)\cdot\mathcal{T}(\ub)(x)\left(\mathrm{Im}\sum_{k=1}^{l}\frac{\alpha_{k}}{\gamma_{k}}f_{k}(\ub)\overline{u}_{k}\right)(y)\;dx\;dy,\label{term4.1}
\end{equation}
where, integration by parts and \eqref{properrad4}, yield
      \begin{align*}
              \begin{split}
           \mathcal{I} &=-4\int_{\R^{5}\times \R^{5}}\varphi(x-y)(x-y)\cdot\left(\sum_{k=1}^{l}\alpha_{k}\mathrm{Im} \nabla u_{k} \overline{u}_{k}\right)(x)\\
      &\qquad\qquad\qquad\qquad\qquad\qquad\qquad\qquad\times\sum_{j=1}^{5}\sum_{k=1}^{l}\;\partial_{j}\mathrm{Im}\left[ \alpha_{k}\partial_{j} u_{k} \overline{u}_{k}\right](y)\;dx\;dy\\
        &=-4\sum_{j=1}^{5}\sum_{m=1}^{5}\int_{\R^{5}\times \R^{5}}\delta_{jm}\phi(x-y)\sum_{k=1}^{l}\alpha_{k}\mathrm{Im} [\partial_{m} u_{k} \overline{u}_{k}](x)\\
        &\qquad\qquad\qquad\qquad\qquad\qquad\qquad\qquad\qquad\qquad\times\sum_{k=1}^{l}\alpha_{k}\mathrm{Im}\left[ \partial_{j} u_{k} \overline{u}_{k}\right](y)\;dx\;dy\\
        &\quad-4\sum_{j=1}^{5}\sum_{m=1}^{5}\int_{\R^{5}\times \R^{5}}P_{jm}(x-y)(\varphi-\phi)(x-y)\sum_{k=1}^{l}\alpha_{k}\mathrm{Im} [\partial_{m} u_{k} \overline{u}_{k}](x)\\
        &\qquad\qquad\qquad\qquad\qquad\qquad\qquad\qquad\qquad\qquad\times\sum_{k=1}^{l}\alpha_{k}\mathrm{Im}\left[ \partial_{j} u_{k} \overline{u}_{k}\right](y)\;dx\;dy.
         \end{split}
    \end{align*}
   Using the definition of $\mathcal{T}$ we then see that     
     \begin{align}
     	\begin{split}   
        \mathcal{I} &=-\int_{\R^{5}\times \R^{5}}\phi(x-y)\mathcal{T}(\ub(x))\cdot\mathcal{T}(\ub(y))\;dx\;dy\nonumber\\
          &\quad-4\sum_{j=1}^{5}\sum_{m=1}^{5}\int_{\R^{5}\times \R^{5}}P_{jm}(x-y)(\varphi-\phi)(x-y)\sum_{k=1}^{l}\alpha_{k}\mathrm{Im} [\partial_{m} u_{k} \overline{u}_{k}](x)\\
        &\qquad\qquad\qquad\qquad\qquad\qquad\qquad\qquad\qquad\qquad\times\sum_{k=1}^{i}\alpha_{k}\mathrm{Im}\left[ \partial_{j} u_{k} \overline{u}_{k}\right](y)\;dx\;dy.
           \end{split}
\end{align}

From  \eqref{defphi} and  the last equality, \eqref{term4.1} reads as
\begin{equation}
   \begin{split}\label{I4}
           \mathcal{I}_{4}&=\frac{-1}{\omega_{5}R^{5}}\int_{\R^{5}\times \R^{5}}\int_{\R^{5}}\chi^{2}\left(\frac{x-s}{R}\right)\chi^{2}\left(\frac{y-s}{R}\right)\mathcal{T}(\ub(x))\cdot\mathcal{T}(\ub(y))\;dx\;dy\;ds\\
              &\quad-4\sum_{j=1}^{5}\sum_{m=1}^{5}\int_{\R^{5}\times \R^{5}}P_{jm}(x-y)(\varphi-\phi)(x-y)\sum_{k=1}^{l}\alpha_{k}\mathrm{Im} [\partial_{m} u_{k} \overline{u}_{k}](x)\\
        &\qquad\qquad\qquad\qquad\qquad\qquad\qquad\qquad\qquad\qquad\times\sum_{k=1}^{1}\alpha_{k}\mathrm{Im}\left[ \partial_{j} u_{k} \overline{u}_{k}\right](y)\;dx\;dy\\
           &\quad-2\int_{\R^{5}\times \R^{5}}\varphi(x-y)(x-y)\cdot\mathcal{T}(\ub((x))\left(\mathrm{Im}\sum_{k=1}^{l}\frac{\alpha_{k}}{\gamma_{k}}f_{k}(\ub)\overline{u}_{k}\right)(y)\;dx\;dy\\
           &=:\mathcal{I}_{41}+\mathcal{I}_{42}+\mathcal{I}_{43}.
           \end{split}
\end{equation}

\noindent {\bf Claim 1.}
We have
\begin{equation*}
   \mathcal{I}_{32}+\mathcal{I}_{42}\geq 0.
\end{equation*}

First note that from Lemma \ref{angulderiv}-(ii) and the fact that $\varphi-\phi$ is a radial function we can write
\begin{equation*}
    \begin{split}
       \mathcal{I}_{32}&=4\int_{\R^{5}\times \R^{5}}(\varphi-\phi)(x-y)\left(\sum_{k=1}^{l}\gamma_{k}|\nnabla\;_{y} u_{k}(x)|^{2}\right)\mathcal{M}(\ub(y))\;dx\;dy\\
        &=2\int_{\R^{5}\times \R^{5}}(\varphi-\phi)(x-y)\left(\sum_{k=1}^{l}\gamma_{k}|\nnabla\;_{y} u_{k}(x)|^{2}\right)\mathcal{M}(\ub(y))\;dx\;dy\\
        &\quad+2\int_{\R^{5}\times \R^{5}}(\varphi-\phi)(x-y)\left(\sum_{k=1}^{l}\gamma_{k}|\nnabla\;_{x} u_{k}(y)|^{2}\right)\mathcal{M}(\ub(x))\;dx\;dy.
    \end{split}
\end{equation*}

On the other hand, using Lemma \ref{angulderiv}-(i) we obtain
\begin{equation*}
    \begin{split}
       \mathcal{I}_{42}&=-4\int_{\R^{5}\times \R^{5}}(\varphi-\phi)(x-y)\left(\sum_{k=1}^{l}\alpha_{k}\mathrm{Im} [\nnabla\;_{y} u_{k} \overline{u}_{k}](x)\right)\cdot\left(\sum_{k=1}^{l}\alpha_{k}\mathrm{Im}\left[ \nnabla\;_{x} u_{k} \overline{u}_{k}\right](y)\right)\;dx\;dy.
    \end{split}
\end{equation*}
Thus, 
\begin{equation*}
    \begin{split}
     \mathcal{I}_{32}+ \mathcal{I}_{42}&=2\left.\int_{\R^{5}\times \R^{5}}(\varphi-\phi)(x-y)\right[\\
       &\qquad\left(\sum_{k=1}^{l}\gamma_{k}|\nnabla\;_{y} u_{k}(x)|^{2}\right)\mathcal{M}(\ub(y))+\left(\sum_{k=1}^{l}\gamma_{k}|\nnabla\;_{x} u_{k}(y)|^{2}\right)\mathcal{M}(\ub(x))\\
        &\qquad-2\left.\left(\sum_{k=1}^{l}\alpha_{k}\mathrm{Im} [\nnabla\;_{y} u_{k} \overline{u}_{k}](x)\right)\cdot\left(\sum_{k=1}^{l}\alpha_{k}\mathrm{Im}\left[ \nnabla\;_{x} u_{k} \overline{u}_{k}\right](y)\right)\right] \;dx\;dy\\
        &=:2\int_{\R^{5}\times \R^{5}}(\varphi-\phi)(x-y)\mathcal{S}\;dx\;dy.
    \end{split}
\end{equation*}
By using the definition of $\mathcal{M}$, an application of the Cauchy-Schwarz inequality gives
\begin{equation*}
    \begin{split}
   \mathcal{S}\geq \sum_{j=1}^{l}\sum_{m=1}^{l}\left(\sqrt{\frac{\gamma_{m}}{\gamma_{j}}}\alpha_{j}|\nnabla\;_{y}u_{m}(x)||u_{j}(y)|-\sqrt{\frac{\gamma_{j}}{\gamma_{m}}}\alpha_{m}|\nnabla\;_{x}u_{j}(y)||u_{m}(x)|\right)^{2} \geq 0.
    \end{split}
\end{equation*}
From this and Lemma \ref{properrad}-(iii), the claim follows.\\

\noindent {\bf Claim 2.}
We have
\begin{equation*}
\begin{split}
   \mathcal{I}_{31}+\mathcal{I}_{41}
   &=\frac{4}{\omega_{5}R^{5}}\int_{\R^{5}}\left[\M\left(\chi\left(\frac{\cdot-s}{R}\right)\ub\right)\int_{\R^{5}}\chi^{2}\left(\frac{x-s}{R}\right)\mathcal{K}\left(\ub^{\xi_{0}}(x)\right)dx\right]\;ds,
\end{split}
\end{equation*}
where $\xi_0$ is given in \eqref{xi_0def} 

Indeed, for $s\in \R^{5}$ fixed,  we first note that the quantity
\begin{equation}\label{inv1}
    \int_{\R^{5}\times \R^{5}}\chi^{2}\left(\frac{x-s}{R}\right)\chi^{2}\left(\frac{y-s}{R}\right)\left[\mathcal{K}(\ub(x))\mathcal{M}(\ub(y))-\frac{1}{4}\mathcal{T}(\ub(x))\cdot\mathcal{T}(\ub(y))\right]\;dx\;dy
\end{equation}
is Galilean invariant, that is, it is invariant under the Gauge transformation \eqref{Gaugetrans}.  To see this, it follows from Lemma \ref{Gaugeprop} that
\begin{equation*}
\begin{split}
    \mathcal{K}(\ub^{\xi}(x))\mathcal{M}(\ub^{\xi}(y))-\frac{1}{4}\mathcal{T}(\ub^{\xi}(x))\cdot\mathcal{T}(\ub^{\xi}(y))&= \mathcal{K}(\ub(x))\mathcal{M}(\ub(y))-\frac{1}{4}\mathcal{T}(\ub(x))\cdot\mathcal{T}(\ub(y))\\
    &\quad+\frac{1}{2}\xi\cdot\mathcal{T}(\ub(x))\mathcal{M}(\ub(y))-\frac{1}{2}\xi\cdot\mathcal{T}(\ub(y))\mathcal{M}(\ub(x)). 
\end{split}
\end{equation*}
Hence, since $\chi^{2}$ is symmetric, inserting the last two terms of the above identity into \eqref{inv1}, a change of variables gives that both integrals are the same, which implies our assertion.  

Now we turn attention to $\mathcal{I}_{31}+\mathcal{I}_{41}$. The first step is to look  for some  $\xi_{0} \in \R^{5}$ such that
\begin{equation*}
    \int_{\R^{5}}\chi^{2}\left(\frac{x-s}{R}\right)\mathcal{T}(\ub^{\xi_0}(x))\;dx=0.
\end{equation*}
From \eqref{GaugeT} this can be achieved by taking $\xi_0$ as in \eqref{xi_0def}.
For this choice of $\xi_{0}$ we obtain
\begin{equation*}
\begin{split}
  \mathcal{I}_{31}+\mathcal{I}_{41}&= \frac{4}{\omega_{5}R^{5}}\int_{\R^{5}}\left\{\left[\int_{ \R^{5}}\chi^{2}\left(\frac{x-s}{R}\right)\mathcal{K}(\ub^{\xi_{0}}(x))\;dx\right]\left[\int_{ \R^{5}}\chi^{2}\left(\frac{y-s}{R}\right)\mathcal{M}(\ub(y))\;dy\right]\right\}\;ds,
\end{split}
\end{equation*} 
and the claim follows from the definition of $\M$. 

Gathering together \eqref{derMR}-\eqref{I3}, \eqref{I4}, Claim 1 and Claim 2, we have
\begin{equation}
\begin{split}\label{a}
    &\frac{1}{\omega_{5}R^{5}}\int_{\R^{5}}\left\{\left[\int_{ \R^{5}}4\chi^{2}\left(\frac{x-s}{R}\right)\mathcal{K}(\ub^{\xi_{0}}(x))\;dx\right.\right.\\
    &\qquad\qquad\qquad\qquad\left.\left.-10\int_{\R^{5}}\mathrm{Re}\, F\left(\chi\left(\frac{x-s}{R}\right)\mathbf{u}(x)\right)\;dx\right]\M\left(\chi\left(\frac{\cdot-s}{R}\right)\ub\right)\right\}\;ds\\
        &\leq\frac{dM_{R}}{dt}\\
        &+\int_{\R^{5}\times \R^{5}}|\mathrm{Re}\, F\left(\mathbf{u}(x)\right)|\mathcal{M}(\ub(y))|8(\varphi-\phi)(x-y)+10[\phi(x-y)-\phi_{1}(x,y)]|\;dx\;dy\\
        &+\int_{\R^{5}\times \R^{5}}|\nabla[4\varphi(x-y)+\phi(x-y)]|\left(2\sum_{k=1}^{l}\gamma_{k}| u_{k}(x)||\nabla u_{k}(x)|\right)\mathcal{M}(\ub(y))\;dx\;dy\\
        &+2\int_{\R^{5}\times \R^{5}}|\varphi(x-y)(x-y)\cdot\mathcal{T}(\ub)(x)|\left|\left(\mathrm{Im}\sum_{k=1}^{l}\frac{\alpha_{k}}{\gamma_{k}}f_{k}(\ub)\overline{u}_{k}\right)(y)\right|\;dx\;dy\\
        &=:\frac{dM_{R}}{dt}+\mathcal{A}+\mathcal{B}+\mathcal{C}.
        \end{split}
\end{equation}
 
 The idea now is to average this inequality over $t\in [a,a+T_{0}]$ and logarithmically over $R\in [R_{0},R_{0}e^{J}]$.

For $\displaystyle\frac{dM_{R}}{dt}$, from the fundamental theorem of calculus and \eqref{boundMR}  we have
\begin{equation}\label{int1}
    \left|\frac{1}{T_{0}J}\int_{a}^{a+T_{0}}\int_{R_{0}}^{R_{0}e^{J}}\frac{dM_{R}}{dt}\;\frac{dR}{R}\;dt\right|\lesssim \frac{1}{T_{0}J}R_{0}e^{J}.
\end{equation}

Next, recalling  \eqref{properrad3}, we have
$$
\int_{R_{0}}^{R_0e^J}|(\varphi-\phi)(x-y)|\frac{dR}{R}\lesssim \int_{R_{0}}^{R_0e^J}\min\left\{\frac{|x-y|}{R},\frac{R}{|x-y|}\right\}\frac{dR}{R}\lesssim 1.
$$
Thus, in view of Lemma \ref{estdiffk}-(i) we deduce
\begin{equation}\label{int1.1}
\begin{split}
 \bigg|\frac{1}{T_{0}J}&\int_{a}^{a+T_{0}}\int_{R_{0}}^{R_{0}e^{J}} \int_{\R^{5}\times \R^{5}}|\mathrm{Re}\, F\left(\mathbf{u}(x)\right)|\mathcal{M}(\ub(y))|8(\varphi-\phi)(x-y)| \;\frac{dR}{R}\;dt\bigg|\\
 &\lesssim \frac{1}{T_{0}J}\int_{a}^{a+T_{0}}\left(\int_{ \R^{5}}\sum_{k=1}^l|u_k(x)|^3dx\right)\left(\int_{\R^5}\mathcal{M}(u(y))dy\right)dt\\
& \lesssim \frac{1}{J},
\end{split}
\end{equation}
where we used the embedding $H^1(\R^5)\hookrightarrow L^3(\R^5)$ and the fact that $\ub(t)$ is uniformly bounded with respect to $t$.
In addition, from \eqref{properrad3},
$$
\int_{R_{0}}^{R_0e^J}|\varphi(x-y)-\phi_1(x,y)|\frac{dR}{R}\lesssim \varepsilon \int_{R_{0}}^{R_0e^J}\frac{dR}{R}\lesssim \varepsilon J.
$$
Therefore, as in \eqref{int1.1}, we now deduce
\begin{equation}\label{int1.2}
		\bigg|\frac{1}{T_{0}J}\int_{a}^{a+T_{0}}\int_{R_{0}}^{R_{0}e^{J}} \int_{\R^{5}\times \R^{5}}|\mathrm{Re}\, F\left(\mathbf{u}(x)\right)|\mathcal{M}(\ub(y))|10[\phi(x-y)-\phi_1(x,y)]| \;\frac{dR}{R}\;dt\bigg|\\
		\lesssim \varepsilon.
\end{equation}
A combination of \eqref{int1.1} and \eqref{int1.2} yields
\begin{equation}
    \left|\frac{1}{T_{0}J}\int_{a}^{a+T_{0}}\int_{R_{0}}^{R_{0}e^{J}}\mathcal{A}\;\frac{dR}{R}\;dt\right|\lesssim (\varepsilon+\frac{1}{J}).\label{int2}
\end{equation}
For $\mathcal{B}$, we recall from \eqref{properrad2} that $|\nabla\varphi(x-y)|+|\nabla \phi(x-y)|\lesssim \frac{1}{R}$. Thus,
$$
\int_{R_{0}}^{R_0e^J}|\nabla[4\varphi(x-y)+\phi(x-y)]|\frac{dR}{R}\lesssim  \int_{R_{0}}^{R_0e^J}\frac{dR}{R^2}\lesssim \frac{1}{R_0}.
$$
Young's inequality and \eqref{unifbond} then give
\begin{equation}
    \left|\frac{1}{T_{0}J}\int_{a}^{a+T_{0}}\int_{R_{0}}^{R_{0}e^{J}}\mathcal{B}\;\frac{dR}{R}\;dt\right|\lesssim \frac{1}{JR_{0}}.\label{int3}
\end{equation}
To deal with $\mathcal{C}$, first we note that  hypothesis \ref{H4}, \eqref{1.17.1}, Lemma \ref{estdiffk}-(i), Young's inequality and \eqref{unifbond} yield
\begin{equation*}
    \begin{split}
        \left|\int_{\R^{5}}\mathrm{Im}\sum_{k=1}^{l}\frac{\alpha_{k}}{\gamma_{k}}f_{k}(\ub(y))\overline{u}_{k}(y)\right|&=\left|\int_{\R^{5}}\mathrm{Im}\sum_{k=1}^{l}\left(\frac{\alpha_{k}}{\gamma_{k}}-\sigma_{k}\right)f_{k}(\ub(y))\overline{u}_{k}(y)\right|\leq \varepsilon_{1}\int_{\R^{5}}\sum_{k=1}^{l}|f_{k}(\ub(y))||u_{k}(y)|\\
        &\lesssim \varepsilon_{1}\|\ub\|^{3}_{\Lb^{3}}\lesssim\varepsilon_{1}.
    \end{split}
\end{equation*}
Also, from Young's inequality and \eqref{unifbond}, we have
$$
\left|\int_{\R^5}\mathcal{T}(\ub(x))dx\right|\lesssim \|\ub\|_{\Hb^1}^2\lesssim 1.
$$
Thus, applying  Cauchy-Schwarz inequality in $\mathcal{C}$,  property $|x-y||\varphi(x-y)|\lesssim 1$ in \eqref{properrad1} and the above estimates we obtain
\begin{equation*}
    \mathcal{C}\lesssim R\varepsilon_{1}. 
\end{equation*}
Hence,
\begin{equation}
    \left|\frac{1}{T_{0}J}\int_{a}^{a+T_{0}}\int_{R_{0}}^{R_{0}e^{J}}\mathcal{C}\;\frac{dR}{R}\;dt\right|\lesssim \frac{R_{0}e^{J}}{J}\varepsilon_{1}.\label{int4}
\end{equation}

Finally, we get a lower bound  for the average of the left-hand side of \eqref{a}, which we will denote by $LHS$. Applying Lemma \ref{lemcoerbal}, there exist $\delta>0$ (and $R$ sufficiently large) such that 
\begin{equation*}
    \begin{split}
        \left|10\int_{\R^{5}}\mathrm{Re}\, F\left(\chi\left(\frac{x-s}{R}\right)\mathbf{u}(x)\right)\;dx\right|\leq  4(1-\delta)\int_{\R^{5}}\chi^{2}\left(\frac{x-s}{R}\right)\mathcal{K}(\ub^{\xi_{0}})\;dx,
    \end{split}
\end{equation*}
uniformly for $t\in \R$. The last inequality combined with   \eqref{identK}  gives
\begin{equation*}
    \begin{split}
       &4\delta K\left(\chi\left(\frac{x-s}{R}\right)\ub^{\xi_{0}}\right)+4\delta\int_{\R^{5}} \chi\left(\frac{x-s}{R}\right)\Delta \left(\chi\left(\frac{x-s}{R}\right)\right)\sum_{k=1}^{l}\gamma_{k}|u_{k}|^{2}\;dx\\
       &\leq 4\int_{\R^{5}}\chi^{2}\left(\frac{x-s}{R}\right)\mathcal{K}(\ub^{\xi_{0}})\;dx-  10\int_{\R^{5}}\mathrm{Re}\, F\left(\chi\left(\frac{x-s}{R}\right)\mathbf{u}(x)\right)\;dx.
    \end{split}
\end{equation*}

By recalling that $\left|\Delta \chi\left(\frac{x-s}{R}\right)\right|\leq \frac{C}{R^{2}}$,  the absolute value of the second term on the left-hand side can be bounded by $\frac{C}{R^{2}}Q(\ub_{0})$. Thus, 
\begin{equation*}
    \begin{split}\label{}
       &4\delta K\left(\chi\left(\frac{x-s}{R}\right)\ub^{\xi_{0}}\right)-\frac{C}{R^{2}}Q(\ub_{0})\\
       &\leq 4\int_{\R^{5}}\chi^{2}\left(\frac{x-s}{R}\right)\mathcal{K}(\ub^{\xi_{0}})\;dx-  10\int_{\R^{5}}\mathrm{Re}\, F\left(\chi\left(\frac{x-s}{R}\right)\mathbf{u}(x)\right)\;dx.
    \end{split}
\end{equation*}

From this and the conservation of the charge we conclude 
\begin{align}
\begin{split}
 &\frac{\delta}{JT_{0}}\int_{a}^{a+T_{0}}\int_{R_{0}}^{R_{0}e^{J}}\frac{1}{R^{5}}\int_{\R^{5}}\left[\M\left(\chi\left(\frac{\cdot-s}{R}\right)\ub\right)K\left(\chi\left(\frac{\cdot-s}{R}\right)\ub^{\xi_{0}}\right)\right]ds\frac{dR}{R}dt\\ 
 &-\frac{C}{JT_{0}}\int_{a}^{a+T_{0}}\int_{R_{0}}^{R_{0}e^{J}}\frac{1}{R^{5}}\int_{\R^{5}}\M\left(\chi\left(\frac{\cdot-s}{R}\right)\ub\right)ds\frac{dR}{R^{3}}dt\\
    &\leq \frac{1}{T_{0}J}\int_{a}^{a+T_{0}}\int_{R_{0}}^{R_{0}e^{J}}(LHS)\;\frac{dR}{R}\;dt.\label{int5}
\end{split}
\end{align}
By noting that that
$$
\int_{\R^{5}}\M\left(\chi\left(\frac{\cdot-s}{R}\right)\ub\right)ds\lesssim R^5\|\chi\|^2_{L^2}Q(\ub_0)
$$
we see that  the second term on the left-hand side of \eqref{int5} can be bounded from below by $\frac{-C}{JR_{0}^{2}}$.  Hence, combining \eqref{a},  \eqref{int1}, \eqref{int2}, \eqref{int3}, \eqref{int4} and \eqref{int5} we deduce
 \begin{align*}
\begin{split}
    &\frac{\delta}{JT_{0}}\int_{a}^{a+T_{0}}\int_{R_{0}}^{R_{0}e^{J}}\frac{1}{R^{5}}\int_{\R^{5}}\left[\M\left(\chi\left(\frac{\cdot-s}{R}\right)\ub\right)K\left(\chi\left(\frac{\cdot-s}{R}\right)\ub^{\xi_{0}}\right)\right]ds\frac{dR}{R}dt\\ &\lesssim \left(\frac{R_{0}e^{J}}{T_{0}J}+\varepsilon+\frac{1}{J}+\frac{1}{JR_{0}}+\frac{R_{0}e^{J}}{J}\varepsilon_{1}+\frac{1}{JR_{0}^{2}}\right).
\end{split}
\end{align*}
By taking $J=\varepsilon^{-2}$, $R_{0}=\varepsilon^{-1}$, $T_{0}=e^{\varepsilon^{-2}}$, and $\varepsilon_{1}=e^{-\varepsilon^{-2}}$, the proof of the proposition is completed. 
\end{proof}

\subsection{Proof of Theorem \ref{thm:critscatt}} This section is devoted to prove our main result. To do so, with the help of Proposition \ref{viri-Morz} we will use the scattering criterion established in Theorem \ref{scatteringcriteio}.

 \begin{proof}[Proof of Theorem \ref{thm:critscatt}]
 	Let $\varepsilon>0$ be small enough and fix $a\in\R$. According to Theorem \ref{scatteringcriteio} it suffices to show that there exists $t_0\in(a,a+T_0)$ such that \eqref{t_0T_0} holds. To do that, first note that \eqref{virmorineq} may be written as
\begin{equation}\label{fl1}
	 \frac{1}{J}\int_{R_{0}}^{R_{0}e^{J}}f(R)\frac{dR}{R}\lesssim \varepsilon,
\end{equation}
 where
 $$
 f(R)=  \frac{\delta}{T_{0}}\int_{a}^{a+T_{0}}\frac{1}{R^{5}}\int_{\R^{5}}\left[\M\left(\chi\left(\frac{\cdot-s}{R}\right)\ub\right)K\left(\chi\left(\frac{\cdot-s}{R}\right)\ub^{\xi_{0}}\right)\right]\;ds\;dt.
 $$
 But from the mean value theorem for definite integrals,
 	 there exists $R\in [R_{0},R_{0}e^{J}]$ so that
 	\begin{equation}\label{fl2}
 		 \frac{1}{J}\int_{R_{0}}^{R_{0}e^{J}}f(R)\frac{dR}{R}=\frac{f(R)}{J}\int_{R_{0}}^{R_{0}e^{J}}\frac{dR}{R}=f(R).
 	\end{equation}
 By combining \eqref{fl1} and \eqref{fl2} we then deduce that
 \begin{equation*}
     \frac{\delta}{T_{0}}\int_{a}^{a+T_{0}}\frac{1}{R^{5}}\int_{\R^{5}}\left[\M\left(\chi\left(\frac{\cdot-s}{R}\right)\ub\right)K\left(\chi\left(\frac{\cdot-s}{R}\right)\ub^{\xi_{0}}\right)\right]\;ds\;dt\lesssim\varepsilon,
 \end{equation*}
for some $R\in [R_{0},R_{0}e^{J}]$.
In particular, from the definition of the functionals $\M$ and $K$  we have for $k=1,\ldots, l$
\begin{equation*}
     \frac{1}{T_{0}}\int_{a}^{a+T_{0}}\frac{1}{R^{5}}\int_{\R^{5}}\left\|\chi\left(\frac{\cdot-s}{R}\right)u_{k}\right\|^{2}_{L^{2}_{x}}\left\|\nabla\left[\chi\left(\frac{\cdot-s}{R}\right)u_{k}^{\xi_{0}}\right]\right\|^{2}_{L^{2}_{x}}\;ds\;dt\lesssim\varepsilon.
 \end{equation*}
  Now, we make the change of variables $s=\frac{R}{4}(z+\theta)$, where $z\in \Z^{5}$ and $\theta\in [0,1]^{5}$ to obtain
 \begin{equation*}
 	\frac{1}{T_{0}}\int_{a}^{a+T_{0}}\sum_{z\in \Z^{5}}\int_{[0,1]^5}\left\|\chi\left(\frac{\cdot-\frac{R}{4}(z+\theta)}{R}\right)u_{k}\right\|^{2}_{L^{2}_{x}}\left\|\nabla\left[\chi\left(\frac{\cdot-\frac{R}{4}(z+\theta)}{R}\right)u_{k}^{\xi_{0}}\right]\right\|^{2}_{L^{2}_{x}}\;d\theta\,dt\lesssim \varepsilon.
 \end{equation*}
The mean value theorem then implies the existence of $\theta_{1}\in [0,1]^{5}$ such that 
 \begin{equation}\label{des1}
     \frac{1}{T_{0}}\int_{a}^{a+T_{0}}\sum_{z\in \Z^{5}}\left\|\chi\left(\frac{\cdot-\frac{R}{4}(z+\theta_1)}{R}\right)u_{k}\right\|^{2}_{L^{2}_{x}}\left\|\nabla\left[\chi\left(\frac{\cdot-\frac{R}{4}(z+\theta_1)}{R}\right)u_{k}^{\xi_{0}}\right]\right\|^{2}_{L^{2}_{x}}\;dt\lesssim \varepsilon.\\
 \end{equation}\\
 
 \noindent {\bf Claim.} There exists $t_{0}\in \left[a+\frac{T_{0}}{2},a+\frac{3T_{0}}{4}\right]$ such that $[t_{0}-\varepsilon^{-\alpha}, t_{0}]\subset [a,a+T_{0}]$, where $\alpha$ is a small constant to be determined later, and
 \begin{equation}\label{clamibeta}
      \int_{t_{0}-\varepsilon^{-\alpha}}^{t_{0}}\sum_{z\in \Z^{5}}\left\|\chi\left(\frac{\cdot-\frac{R}{4}(z+\theta_1)}{R}\right)u_{k}\right\|^{2}_{L^{2}_{x}}\left\|\nabla\left[\chi\left(\frac{\cdot-\frac{R}{4}(z+\theta_1)}{R}\right)u_{k}^{\xi_{0}}\right]\right\|^{2}_{L^{2}_{x}}\;dt\lesssim \varepsilon^{1-\alpha}.
 \end{equation}
 Indeed, by splitting the interval $\in \left[a+\frac{T_{0}}{2},a+\frac{3T_{0}}{4}\right]$ into $T_{0}\varepsilon^{\alpha}$ subintervalos of length $\varepsilon^{-\alpha}$, we can find an interval, say, $[t_{0}-\varepsilon^{-\alpha}, t_{0}]$ such that   \eqref{clamibeta} holds. Otherwise, by the choice of $\alpha$ we would have
 \begin{equation*}
     \int_{a+\frac{T_{0}}{2}}^{a+\frac{3T_{0}}{4}}\sum_{z\in \Z^{5}}\left\|\chi\left(\frac{\cdot-\frac{R}{4}(z+\theta_1)}{R}\right)u_{k}\right\|^{2}_{L^{2}_{x}}\left\|\nabla\left[\chi\left(\frac{\cdot-\frac{R}{4}(z+\theta_1)}{R}\right)u_{k}^{\xi_{0}}\right]\right\|^{2}_{L^{2}_{x}}\;dt\gtrsim T_0\varepsilon^\alpha \varepsilon^{1-\alpha}=T_0\varepsilon,
 \end{equation*}
  which  contradicts \eqref{des1}.  Therefore the claim follows. \\   
 
Recall that our goal is to get the estimate \eqref{t_0T_0}. By interpolation we have
\begin{equation}
	\begin{split}
		\|u_{k}\|_{L^{6}_{t}L^{3}_{x}([t_{0}-\varepsilon^{-\alpha},t_{0}]\times \R^{5})}&\leq  \|u_{k}\|^{\frac{7}{12}}_{L^{\frac{7}{2}}_{t}L^{\frac{7}{2}}_{x}([t_{0}-\varepsilon^{-\alpha},t_{0}]\times \R^{5})} \|u_{k}\|^{\frac{5}{12}}_{L^{\infty}_{t}L_x^{\frac{5}{2}}([t_{0}-\varepsilon^{-\alpha},t_{0}]\times \R^{5})}\\
		&\lesssim \|u_{k}\|^{\frac{7}{12}}_{L^{\frac{7}{2}}_{t}L^{\frac{7}{2}}_{x}([t_{0}-\varepsilon^{-\alpha},t_{0}]\times \R^{5})},
	\end{split}
\end{equation}
where in the last inequality we used the embedding $H^1(\R^5)\hookrightarrow L^{\frac{5}{2}}(\R^5)$ and \eqref{gloestH1}. Another interpolation gives
\begin{equation}\label{rt1}
\|u_{k}\|_{L^{6}_{t}L^{3}_{x}([t_{0}-\varepsilon^{-\alpha},t_{0}]\times \R^{5})} \lesssim \left(\|u_{k}\|^{\frac{5}{13}}_{L^{\frac{5}{2}}_{t}L^{\frac{5}{2}}_{x}([t_{0}-\varepsilon^{-\alpha},t_{0}]\times \R^{5})} \|u_{k}\|^{\frac{8}{13}}_{L^{\frac{14}{3}}_{t}L_x^{\frac{14}{3}}([t_{0}-\varepsilon^{-\alpha},t_{0}]\times \R^{5})}\right)^{\frac{7}{12}}.
\end{equation}

By noting that $\left(\frac{14}{3},\frac{70}{29}\right)$ is an admissible pair and $W^{1,\frac{70}{29}}(\R^5)\hookrightarrow L^{\frac{14}{3}}(\R^5)$, a standard continuity argument yields
\begin{equation}\label{rt2}
	\|u_{k}\|_{L^{\frac{14}{3}}_{t}L_x^{\frac{14}{3}}([t_{0}-\varepsilon^{-\alpha},t_{0}]\times \R^{5})}\lesssim (1+\varepsilon^{-\alpha})^{\frac{3}{14}}.
\end{equation}
Also, using the Gagliardo-Nirenberg inequality $
     \|f\|_{L^{\frac{5}{2}}_{x}(\R^{5})}\lesssim\|f\|^{\frac{1}{2}}_{L^{2}_{x}(\R^{5})}\|\nabla f\|^{\frac{1}{2}}_{L^{2}_{x}(\R^{5})},
$ and the above claim
 we conclude that
 \begin{equation}\label{estpart1}
      \int_{t_{0}-\varepsilon^{-\alpha}}^{t_{0}}\sum_{z\in \Z^{5}}\left\|\chi\left(\frac{\cdot-\frac{R}{4}(z+\theta_1)}{R}\right)u_{k}\right\|^{4}_{L_{x}^{\frac{5}{2}}}\;dt=
       \int_{t_{0}-\varepsilon^{-\alpha}}^{t_{0}}\sum_{z\in \Z^{5}}\left\|\chi\left(\frac{\cdot-\frac{R}{4}(z+\theta_1)}{R}\right)u_{k}^{\xi_0}\right\|^{4}_{L_{x}^{\frac{5}{2}}}\;dt
      \lesssim\varepsilon^{1-\alpha}.
 \end{equation}

On the other hand,  by interpolation, the Sobolev embedding, and the Cauchy-Schwartz inequality we see that
\begin{equation}\label{estpart2}
	\begin{split}
		\sum_{z\in \Z^{5}}&\left\|\chi\left(\frac{\cdot-\frac{R}{4}(z+\theta_1)}{R}\right)u_{k}\right\|^{2}_{L_x^{\frac{5}{2}}}\lesssim \sum_{z\in \Z^{5}}\left\|\chi\left(\frac{\cdot-\frac{R}{4}(z+\theta_1)}{R}\right)u_{k}\right\|_{L_x^{2}} \left\|\chi\left(\frac{\cdot-\frac{R}{4}(z+\theta_1)}{R}\right)u_{k}\right\|_{L_x^{\frac{10}{3}}}\\
		&\lesssim \left(\sum_{z\in \Z^{5}}\left\|\chi\left(\frac{\cdot-\frac{R}{4}(z+\theta_1)}{R}\right)u_{k}\right\|^2_{L_x^{2}}\right)^{\frac{1}{2}}  \left(\sum_{z\in \Z^{5}} \left\|\chi\left(\frac{\cdot-\frac{R}{4}(z+\theta_1)}{R}\right)u_{k}\right\|^2_{L_x^{\frac{10}{3}}}\right)^{\frac{1}{2}}\\
		&\lesssim \|u_{k}\|_{L_x^{2}}\|\nabla u_{k}\|_{L_x^2}\lesssim 1. 
	\end{split}
\end{equation}

Now, using  H\"{o}lder inequality (in the sum)  and then in time,  we deduce in view of \eqref{estpart1} and \eqref{estpart2}
  \begin{equation*}
 	\begin{split}
 		&\|u_{k}\|^{\frac{5}{2}}_{L^{\frac{5}{2}}_{tx}([t_{0}-\varepsilon^{-\alpha}, t_{0}]\times \R^{5})}\\
 		&\lesssim \int_{t_{0}-\varepsilon^{-\alpha}}^{t_{0}}\sum_{z\in \Z^{5}} \left\|\chi\left(\frac{\cdot-\frac{R}{4}(z+\theta_1)}{R}\right)u_{k}\right\|^{\frac{5}{2}}_{L_{x}^{\frac{5}{2}}}\; dt \\
 		&\lesssim \int_{t_{0}-\varepsilon^{-\alpha}}^{t_{0}}\left(\sum_{z\in \Z^{5}}\left\|\chi\left(\frac{\cdot-\frac{R}{4}(z+\theta_1)}{R}\right)u_{k}\right\|^{4}_{L_{x}^{\frac{5}{2}}}\right)^{\frac{1}{4}}\left(\sum_{z\in \Z^{5}}\left\|\chi\left(\frac{\cdot-\frac{R}{4}(z+\theta_1)}{R}\right)u_{k}\right\|^{\frac{3}{4}}_{L_{x}^{\frac{5}{2}}}\right)^{\frac{1}{3}}\;dt\\
 		&\lesssim \left(\int_{t_{0}-\varepsilon^{-\alpha}}^{t_{0}}\sum_{z\in \Z^{5}}\left\|\chi\left(\frac{\cdot-\frac{R}{4}(z+\theta_1)}{R}\right)u_{k}\right\|^{4}_{L_{x}^{\frac{5}{2}}}\;dt\right)^{\frac{1}{4}}\left(\int_{t_{0}-\varepsilon^{-\alpha}}^{t_{0}}\sum_{z\in \Z^{5}}\left\|\chi\left(\frac{\cdot-\frac{R}{4}(z+\theta_1)}{R}\right)u_{k}\right\|^{2}_{L_{x}^{\frac{5}{2}}}\;dt\right)^{\frac{3}{4}}\\
 		&\lesssim \varepsilon^{\frac{1-\alpha}{4}}\varepsilon^{-\frac{3}{4}\alpha}=\varepsilon^{\frac{1}{4}-\alpha}.
 	\end{split}
\end{equation*}
from which we deduce
\begin{equation}\label{rt3}
	\|u_{k}\|_{L^{\frac{5}{2}}_{tx}([t_{0}-\varepsilon^{-\alpha}, t_{0}]\times \R^{5})} \lesssim \varepsilon^{\frac{2}{5}\left(\frac{1}{4}-\alpha\right)}.
\end{equation}
Gathering together \eqref{rt1}, \eqref{rt2} and \eqref{rt3} we obtain
	\begin{equation*}
	 \|u_{k}\|_{L^{6}_{t}L^{3}_{x}([t_{0}-\varepsilon^{-\alpha},t_{0}]\times \R^{5})}\lesssim 
	 \left(\varepsilon^{\frac{2}{5}\left(\frac{1}{4}-\alpha\right)}\right)^{\frac{35}{156}} \left((1+\varepsilon^{-\alpha})^{\frac{3}{14}}\right)^{\frac{56}{156}}.
	\end{equation*}
Finally, by choosing $\alpha>0$ small enough we deduce the existence of some positive constant $\mu$ such that
	\begin{equation*}
	\|u_{k}\|_{L^{6}_{t}L^{3}_{x}([t_{0}-\varepsilon^{-\alpha},t_{0}]\times \R^{5})}\lesssim 
	\varepsilon^{\mu}.
\end{equation*}

 By summing over $k$   we conclude that \eqref{t_0T_0} holds. Hence, an application of Theorem \ref{scatteringcriteio} completes the proof.
\end{proof}

\section*{Acknowledgement}
A.P. is partially supported by CNPq/Brazil grant 303762/2019-5 and FAPESP/Brazil grant 2019/02512-5.



\end{document}